\numberwithin{equation}{section}
\def\veps{\hat{\varepsilon}}
\def\RSS{{\mathscr R}}
\newcommand\RR{\mathscr{R}}
\newcommand\R[1][n]{\RR_{#1}^{\Lambda}}
  \def\hg{{\mathfrak h}}
       \def\km{{\mathbbm k}}
    \def\NM{{\mathbb{N}}}
    \def\QM{{\mathbb{Q}}}
    \def\RM{{\mathbb{R}}}
\def\SG{{\mathfrak S}}
    \def\ZM{{\mathbb{Z}}}
\def\bnu{\mathbf{\nu}}
  \def\kb{{\mathbf k}}
    \def\OC{{\mathcal{O}}}
\def\Lam{\Lambda}
\def\lam{\lambda}
\newcommand{\nc}{\newcommand} \newcommand{\renc}{\renewcommand}
\renc{\l}{\lambda}
\newcommand{\rdots}{\mathinner{ \mkern1mu\raise1pt\hbox{.}
    \mkern2mu\raise4pt\hbox{.}
    \mkern2mu\raise7pt\vbox{\kern7pt\hbox{.}}\mkern1mu}}
\DeclareMathOperator{\Ann}{Ann}
\DeclareMathOperator{\Ker}{Ker}
\def\ov{\overline}
\def\un{\underline}
\def\to{\rightarrow}
\def\laongto{\laongrightarrow}
\nc{\triright}{\stackrel{[1]}{\to}}
\nc{\laongtriright}{\stackrel{[1]}{\laongto}}
\DeclareMathOperator{\Hom}{Hom}
\numberwithin{equation}{section}
\newtheorem{prop}[equation]{Proposition}
\newtheorem{thm}[equation]{Theorem}
\newtheorem{cor}[equation]{Corollary}
\newtheorem{lem}[equation]{Lemma}
\theoremstyle{definition}
\newtheorem{dfn}[equation]{Definition}
\newtheorem{cconj}[equation]{Center Conjecture}
\theoremstyle{remark}
\newtheorem{rem}[equation]{Remark}
\theoremstyle{remark}
\newtheorem{remark}[equation]{Remark}
\newcommand{\ra}{\rightarrow}
\newcommand{\into}{\hookrightarrow}
\def\del{\partial}
\nc{\simto}{\stackrel{\sim}{\to}}
\nc{\la}{\langle}
\renc{\ra}{\rangle}
\nc{\diag}{\mathrm{diag}}
\newcommand{\ubr}[2]{\underbrace{#1}_{#2}}
\newcommand\leftdash{\!\rotatebox[origin=c]{-60}{$\dabar@\dabar@\dabar@$}\!}
\newcommand\rightdash{\!\rotatebox[origin=c]{60}{$\dabar@\dabar@\dabar@$}\!}
\newlength{\my}
\title[Center conjecture for the cyclotomic KLR algebras]{On the center conjecture for the cyclotomic KLR algebras}
\author{Jun Hu}\address{MIIT Key Laboratory of Mathematical Theory and Computation in Information Security\\
  School of Mathematical and Statistics\\
  Beijing Institute of Technology\\
  Beijing, 100081, P.R. China}
\email{junhu404@bit.edu.cn}
\author{Huang Lin}\address{School of Mathematical Sciences\\
 Zhejiang University\\
 Hangzhou, 310027, P.R. China}
\email{3130100651@zju.edu.cn}
\begin{document}



\begin{abstract}
The center conjecture for the cyclotomic KLR algebras $\R[\beta]$ asserts that the center of $\R[\beta]$ consists of symmetric elements in its KLR $x$ and
$e(\nu)$ generators. In this paper we show that this conjecture is equivalent to the  injectivity of some natural map $\overline{\iota}_\beta^{\Lam,i}$ from the cocenter of
$\RR_\beta^{\Lam}$ to the cocenter of $\RR_\beta^{\Lam+\Lam_i}$ for all $i\in I$ and $\Lam\in P^+$. We prove that the map $\overline{\iota}_\beta^{\Lam,i}$
is given by multiplication with a center element $z(i,\beta)\in\mathscr{R}_{\beta}^{\Lambda+\Lambda_i}$ and we explicitly calculate the element $z(i,\beta)$
in terms of the KLR $x$ and $e(\nu)$ generators. We present an explicit monomial basis for certain bi-weight spaces of the defining ideal of $\R[\beta]$ and of
$\R[\beta]$. For $\beta=\sum_{j=1}^n\alpha_{i_j}$ with $\alpha_{i_1},\cdots,
\alpha_{i_n}$ pairwise distinct, we construct an explicit monomial basis of $\R[\beta]$, prove the map $\overline{\iota}_\beta^{\Lam,i}$ is injective and thus verify the center conjecture for these $\mathscr{R}_{\beta}^{\Lambda}$.
\end{abstract}

\maketitle


\section*{Introduction}

Given a Catan datum or a symmetrizable Cartan matrix, Khovanov-Lauda (\cite{KL09,KL11}) and Rouquier (\cite{R08}) introduced a vast family of algebras $\RR_\beta$, called KLR algebras or quiver Hecke algebras, and use them to provide a categorification of the negative part of the quantum groups associated to the same Cartan datum. These algebras play an important role in the categorical representation theory of $2$-Kac-Moody algebras. Khovanov, Lauda and Rouquier also defined their graded cyclotomic quotients $\R[\beta]$ associated with an integral dominant weight $\Lambda$ and conjectured that they categorify the corresponding irreducible integrable highest weight module of the quantum groups. This conjecture was later proved by Kang-Kashiwara (\cite{KK12}) and Webster (\cite{Web10}).

The KLR algebras generalize the affine Hecke algebras in many aspects of structure and representation theory. For example, both of them admit a faithful polynomial representation and have monomial bases.
The center of a KLR algebra is proved to consist of all symmetric elements in its KLR generators $x_1,\ldots ,x_n$ and $e(\nu)$, which is similar to the well-known Bernstein theorem on the centers of affine Hecke algebras.
In the case of type $A^{(1)}_{e}$ and $A_{\infty}$, Brundan and Kleshchev (\cite{BK08}) proved these cyclotomic quotients $\R[\beta]$ are isomorphic to the block algebras of the cyclotomic Hecke algebras of type $A$. The graded cellular bases of the cyclotomic quotients $\R[\beta]$ have been constructed in \cite{HM}, \cite{Li}, \cite{MT1}, \cite{MT2} in some finite and affine types. For more results on the structure and representation theory of the cyclotomic KLR algebras $\R[\beta]$, see \cite{AP14, AP16a, AP16b, APS, HS, Sp17}.

One of the major unsolved open problems on $\R[\beta]$ is the center conjecture, which asserts that the centers of $\R[\beta]$ consists of symmetric elements in
its KLR $x$ and $e(\nu)$ generators. Using Brundan-Kleshchev's isomorphism, one can reduce the center conjecture for $\R[\beta]$ to the claim that the center of
the cyclotomic Hecke algebra consists of symmetric polynomials in its Jucys-Murphy operators. The latter was proved for the degenerated cyclotomic Hecke algebras
by Brundan (\cite{B08}). In the level one case, the latter is just the Dipper-James conjecture (\cite{DJ87}) for the Iwahori-Hecke algebras of type $A$, which
was proved by Francis and Graham (\cite{FG06}).

In this paper we show that the center conjecture for general $\R[\beta]$ is equivalent to the claim that the natural homomorphism $\overline{p}_\beta^{\Lam,i}$
from the center of $\RR_\beta^{\Lam+\Lam_i}$ to the center of $\RR_\beta^{\Lam}$ is surjective for any $i\in I$ and any $\Lam\in P^+$  (which was first
observed in \cite{H17}). The latter is clearly equivalent to the injectivity of the naturally induced dual map $\overline{\iota}_\beta^{\Lam,i}$ from the
cocenter $\RR_\beta^{\Lam}/[\RR_\beta^{\Lam},\RR_\beta^{\Lam}]$ of $\RR_\beta^{\Lam}$ to the cocenter $\RR_\beta^{\Lam+\Lam_i}/[\RR_\beta^{\Lam+\Lam_i},
\RR_\beta^{\Lam+\Lam_i}]$ of $\RR_\beta^{\Lam+\Lam_i}$ for any $i\in I$ and any $\Lam\in P^+$. We show that the map $\iota_\beta^{\Lam,i}$ is given by
multiplication with a center element $z(i,\beta)$ and we explicitly calculate the element $z(i,\beta)$ in terms of
the KLR $x$ and $e(\nu)$ generators. In the case of type $A^{(1)}_{e}$ and $A_{\infty}$, the multiplication with the center element $z(i,\beta)$ actually
sends each graded cellular basis element (resp., monomial basis element) of $\R[\beta]$ to a graded cellular basis element (resp., monomial basis element) of
$\RR_\beta^{\Lam+\Lam_i}$. Thus the studying of the injectivity of $\overline{\iota}_\beta^{\Lam,i}$ seems to be a rather promising approach (which we named it
as ``cocenter approach'') as long as one can construct a basis of the cocenter represented by some graded cellular basis or monomial basis elements.
In this paper, we construct a monomial basis for certain bi-weight space of the defining ideal of $\R[\beta]$. For $\beta=\sum_{j=1}^n\alpha_{i_j}$ with
$\alpha_{i_1},
\cdots,\alpha_{i_n}$ pairwise distinct, we construct an explicit monomial basis for the cocenter of $\R[\beta]$. We introduce a new combinatorial notion of
``indecomposable relative to $\nu$''  in Definition \ref{indecomp0} which seems of independent interest. This notion plays a key role in our construction of the
monomial basis of the cocenter. We then apply the
``cocenter approach'' to show that in this case
the map $\overline{\iota}_\beta^{\Lam,i}$ is injective and thus verify the center conjecture for these $\mathscr{R}_{\beta}^{\Lambda}$.

This paper is organized as follows. In Section~\ref{sec:Prel}, we recall some basic definitions and properties of the KLR algebra $\RR_\beta$ and its cyclotomic
quotient $\R[\beta]$.
In Section~\ref{sec:DC}, we present our cocenter approach to the center conjecture for general $\R[\beta]$. We show that in the center conjecture is equivalent to
the injectivity of the naturally induced dual map $\overline{\iota}_\beta^{\Lam,i}$ from the cocenter $\RR_\beta^{\Lam}/[\RR_\beta^{\Lam},\RR_\beta^{\Lam}]$ of
 $\RR_\beta^{\Lam}$ to the cocenter $\RR_\beta^{\Lam+\Lam_i}/[\RR_\beta^{\Lam+\Lam_i},\RR_\beta^{\Lam+\Lam_i}]$ of $\RR_\beta^{\Lam+\Lam_i}$ for any $i\in I$
 and any $\Lam\in P^+$, and the map $\overline{\iota}_\beta^{\Lam,i}$ is given by multiplication with a center element $z(i,\beta)$.
 The main result is Theorem \ref{thm:annker}, where we explicitly calculate the element $z(i,\beta)$ in terms of the KLR $x$ and $e(\nu)$. The proof makes
 essential use of the explicit formulae for the bubbles presented in \cite[Definition A.1, Proposition B.3]{SVV}. In Section~\ref{sec:MBCKLR}, we present an
 explicit monomial basis (in Theorem \ref{thm:MBBWS} and Corollary \ref{maincor2}) for certain bi-weight spaces of the defining ideal of $\R[\beta]$ and
of the whole $\R[\beta]$. In Section~\ref{sec:MBCocen}, we study the cyclotomic KLR algebras when $\beta=\sum_{j=1}^n\alpha_{i_j}$
 with $\alpha_{i_1},\cdots,\alpha_{i_n}$ pairwise distinct. We introduce a combinatorial notion of ``indecomposable
relative to $\nu$''  in Definition \ref{indecomp0} and give a complete characterization of all indecomposable elements in $\SG_n$ which is
relative to a given $\nu\in I^\beta$ in Lemma \ref{indecomp1}. We construct in Theorem \ref{thm:MBCocen} an explicit monomial basis for the cocenter of $\R[\beta]$.
Finally, we show in Lemma \ref{finalInject} that in this case the map $\overline{\iota}_\beta^{\Lam,i}$ is injective, and thus obtain in Theorem \ref{thm:cenconj} that
the Center Conjecture \ref{conj:CC} holds for these $\mathscr{R}_{\beta}^{\Lambda}$.

\bigskip
\centerline{Acknowledgements}
\bigskip

The research was supported by the National Natural Science Foundation of China (No. 12171029).
\bigskip

\section{Preliminary}\label{sec:Prel}

In this section, we shall recall some basic definitions and properties of the KLR algebras and their cyclotomic quotients.

Let $I$ be an index set. An integral square matrix $A = (a_{i,j})_{i,j\in I}$ is called a \emph{symmetrizable generalized Cartan matrix} if it satisfies
\begin{enumerate}
  \item $a_{ii} = 2$, $\forall i \in I$;
  \item $a_{ij} \leqslant 0\, (i \neq j)$;
  \item $a_{ij} = 0 \Leftrightarrow \, a_{ji} = 0\, (i,j \in I)$;
  \item there is a diagonal matrix $D = \diag (d_i\in\ZM_{>0}\mid i\in I)$ such that $DA$ is symmetric.
\end{enumerate}

A Cartan datum $(A,P,\Pi , P^{\vee} , \Pi^{\vee})$ consists of
\begin{enumerate}
  \item a symmetrizable generalized Cartan matrix $A$;
  \item a free abelian group $P$ of finite rank, called the \emph{weight lattice};
  \item $\Pi = \{ \alpha_i \in P \mid i\in I \}$, called the set of \emph{simple roots};
  \item $P^{\vee} := \Hom (P,\ZM)$, called the \emph{dual weight lattice} and $\la , \ra : P^{\vee} \times P \to \ZM$ the natural pairing;
  \item $\Pi^\vee = \{ \alpha_i^{\vee} \mid i \in I \} \subset P^{\vee}$, called the set of \emph{simple coroots};
\end{enumerate}
satisfying the following properties:
\begin{enumerate}
  \item $\langle\alpha_i^{\vee} , \alpha_j \rangle = a_{ij}$ for all $i,j\in I$,
  \item $\Pi$ is linearly independent,
  \item $\forall i \in I$, $\exists \Lambda_i \in P$ such that $\langle\alpha_j^{\vee} , \Lambda_i \rangle = \delta_{ij}$ for all $j \in I$.
\end{enumerate}

Those $\Lambda_i$ are called the \emph{fundamental weights}. We denote by
$$P^+ := \{ \l \in P \mid \la \alpha_i^\vee , \l \ra \in \ZM_{\geqslant 0}\text{ for all } i\in I \}$$
the set of \emph{dominant integral weights}.
The free abelian group $Q := \oplus_{i\in I}\ZM \alpha_i$ is called the \emph{root lattice}.
Set $Q^+ = \sum_{i\in I} \ZM_{\geqslant 0}\alpha_i$. For $\beta = \sum_{i\in I} k_i \alpha_i \in Q^+$, we define the \emph{height} of $\beta$ to be $|\beta| = \sum_{i\in I} k_i$. For each $n\in\mathbb{N}$, we set $Q_n^{+} := \{ \beta \in Q^+ \mid |\beta| = n\}$.
Let $\hg = \QM \otimes_{\ZM}P^\vee$. Since $A$ is symmetrizable, there is a symmetric bilinear form $(,)$ on $\hg^*$ satisfying
\begin{align*}
  & (\alpha_i, \alpha_j) = d_i a_{ij} \quad (i,j \in I) \quad \text{and} \\
  & \la \alpha_i^\vee , \l \ra = \frac{2(\alpha_i,\l)}{(\alpha_i,\alpha_i)} \quad \text{for any }\l \in \hg^* \text{ and } i\in I.
\end{align*}

We can now recall the construction of Khovanov-Lauda-Rouquier algebra (or simply KLR algebra) $\RSS_\beta$ associated with a Cartan datum $(A,P,\Pi,P^\vee ,\Pi^{\vee})$ and $\beta\in Q_n^+$.
Let $\kb = \oplus_{n\in \NM}\kb_n$ be a graded commutative noetherian unital ring such that $\kb_0 = \km$ is a field. We fix a matrix $(Q_{i,j})_{i,j\in I}$ in $\kb [u,v]$ such that
\begin{align*}
  & Q_{i,j}(u,v) = Q_{j,i}(v,u), \\
  & Q_{i,i}(u,v) = 0, \\
  & Q_{i,j}(u,v)=\sum_{p,q\geqslant 0} c_{i,j,p,q}\,u^pv^q\ \text{if}\ i\neq j.
\end{align*}
where $c_{i,j,p,q}\in\kb_{-2(\alpha_i,\alpha_j)-(\alpha_i,\alpha_i)p - (\alpha_j,\alpha_j)q}$ and $c_{i,j,-a_{ij},0}\in \kb_0^\times$.

We denote by $\SG_n = \la s_1 , \ldots , s_{n-1}\ra$ the symmetric group on $n$ letters, where $s_i = (i,i+1)$ is the transposition on $i,i+1$.
Then $\SG_n$ acts naturally on $I^n$ by:
$$w.\nu := (\nu_{w^{-1}(1)},\ldots , \nu_{w^{-1}(n)}),$$
where $\nu = (\nu_1 , \ldots, \nu_n) \in I^n$.
The orbits of this action is identified with element of $Q_n^{+}$ and we denote by $I^\beta$ the corresponding orbit.
For any $k,m\in\mathbb{N}$ with $k\leq m$, we set $[k,m]:=\{k,k+1,\cdots,m\}$.

\begin{dfn}\label{KLR}
  The Khovanov-Lauda-Rouquier (KLR) algebra $\RSS_\beta$ associated with a Cartan datum $(A,P,\Pi,P^{\vee},\Pi^{\vee})$, $(Q_{i,j})_{i,j\in I}$ and $\beta \in Q_n^+$ is the associative algebra over $\kb$ generated by $e(\nu)\, (\nu\in I^\beta)$, $x_k\, (1\leqslant k \leqslant n)$, $\tau_l \, (1\leqslant l \leqslant n-1)$ satisfying the following defining relations:
  \begin{equation*}
    \begin{aligned}
      & e(\nu) e(\nu') = \delta_{\nu, \nu'} e(\nu), \ \
      \sum_{\nu \in I^{\beta}}  e(\nu) = 1, \\
      & x_{k} x_{l} = x_{l} x_{k}, \ \ x_{k} e(\nu) = e(\nu) x_{k}, \\
      & \tau_{l} e(\nu) = e(s_{l}(\nu)) \tau_{l}, \ \ \tau_{k} \tau_{l} = \tau_{l} \tau_{k} \ \ \text{if} \ |k-l|>1, \\
      & \tau_{k}^2 e(\nu) = Q_{\nu_{k}, \nu_{k+1}} (x_{k}, x_{k+1})e(\nu), \\
      & (\tau_{k} x_{l} - x_{s_k(l)} \tau_{k}) e(\nu) = \begin{cases}
      -e(\nu) \ \ & \text{if} \ l=k, \nu_{k} = \nu_{k+1}, \\
      e(\nu) \ \ & \text{if} \ l=k+1, \nu_{k}=\nu_{k+1}, \\
      0 \ \ & \text{otherwise},
      \end{cases} \\[.5ex]
      & (\tau_{k+1} \tau_{k} \tau_{k+1}-\tau_{k} \tau_{k+1} \tau_{k}) e(\nu)\\
      &\hspace*{8ex} =\begin{cases} \dfrac{Q_{\nu_{k}, \nu_{k+1}}(x_{k},
      x_{k+1}) - Q_{\nu_{k+2}, \nu_{k+1}}(x_{k+2}, x_{k+1})}{x_{k} - x_{k+2}}e(\nu) \ \ & \text{if} \
      \nu_{k} = \nu_{k+2}, \\
      0 \ \ & \text{otherwise}.
      \end{cases}
    \end{aligned}
  \end{equation*}
\end{dfn}

In particular, $\RSS_0 \cong \kb$, and $\RSS_{\alpha_i}$ is isomorphic to $\kb [x_1]$.
We will write $e(\beta -\alpha_i , i) = \operatornamewithlimits{\sum}\limits_{\substack{\nu\in I^\beta \\ \nu_n = i}}e(\nu)$ and more generally, for $\beta\geq\beta'\in Q_{n-t+1}^+$, and $\nu' \in I^{\beta'}$, $$e(\beta -\beta' , \nu'): = \operatornamewithlimits{\sum}\limits_{\substack{\nu\in I^\beta \\ (\nu_t , \ldots ,\nu_n) = \nu'}}e(\nu)$$  We also abbreviate $e(\ubr{i\cdots i}{k\,\text{copies}})$ as $e(i^k)$.

The algebra $\RSS_\beta$ is $\ZM$-graded whose grading is given by
\begin{equation*}
  \deg e(\nu) = 0, \qquad \deg x_k e(\nu) = (\alpha_{\nu_k},\alpha_{\nu_k}), \qquad \deg \tau_l e(\nu) = - (\alpha_{\nu_l},\alpha_{\nu_{l+1}}).
\end{equation*}

For any $f\in \kb [x_1,\ldots ,x_n]$, define $\del_k (f) = \frac{f - s_k(f)}{x_{k+1} - x_k}$ and call $\del_k $ the $k$-th Demazure operator.

\begin{lem} For any $\nu \in I^\beta$ with $\nu_k = \nu_{k+1}$ and any $f\in \kb [x_1,\ldots ,x_n]$, we have \begin{equation}\label{eq:KK42}
  f\tau_ke(\nu) = (\del_k(f) + \tau_k f + \tau_kf\tau_k(x_k - x_{k+1}))e(\nu),
\end{equation}
\begin{equation}\label{eq:TwoSideDem}
  \tau_k f \tau_k e(\nu) = \tau_k\del_k (f)e(\nu),
\end{equation}
\end{lem}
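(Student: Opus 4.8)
The plan is to reduce both identities to one straightening relation that moves a polynomial past $\tau_k$, and then to read off \eqref{eq:KK42} and \eqref{eq:TwoSideDem} by short formal manipulations inside $\RSS_\beta$, using only Definition~\ref{KLR} and the hypothesis $\nu_k=\nu_{k+1}$. Two consequences of that hypothesis will be used repeatedly: $s_k(\nu)=\nu$, so $\tau_k e(\nu)=e(\nu)\tau_k$; and $\tau_k^2 e(\nu)=Q_{\nu_k,\nu_{k+1}}(x_k,x_{k+1})e(\nu)=0$, since $Q_{i,i}=0$. The straightening relation I would establish is
\begin{equation*}
\tau_k f\,e(\nu)=\bigl(s_k(f)\,\tau_k+\del_k(f)\bigr)e(\nu)\qquad\text{for all }f\in\kb[x_1,\dots,x_n].
\end{equation*}
The set of $f$ for which this holds is a $\kb$-subalgebra of $\kb[x_1,\dots,x_n]$: closure under sums is clear, and closure under products follows from the Leibniz identity $\del_k(fg)=\del_k(f)\,g+s_k(f)\,\del_k(g)$ (insert $\pm\,s_k(f)\,g$ in the numerator defining $\del_k(fg)$), together with the fact that polynomials commute with $e(\nu)$. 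This subalgebra contains each $x_l$, where the identity reduces to the $\tau x$-commutation relation $(\tau_k x_l-x_{s_k(l)}\tau_k)e(\nu)$ of Definition~\ref{KLR} (with $\del_k(x_k)=-1$, $\del_k(x_{k+1})=1$, and $\del_k(x_l)=0$ for $l\ne k,k+1$), hence it is all of $\kb[x_1,\dots,x_n]$.

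Granting this, \eqref{eq:TwoSideDem} follows at once: straightening $\tau_k f\tau_k e(\nu)$ gives $s_k(f)\tau_k^2 e(\nu)+\del_k(f)\tau_k e(\nu)=\del_k(f)\tau_k e(\nu)$; and since $\del_k(f)$ is visibly $s_k$-invariant, applying the straightening relation once more (now with $\del_k(\del_k(f))=0$) shows $\del_k(f)$ commutes with $\tau_k$ modulo $e(\nu)$, so $\del_k(f)\tau_k e(\nu)=\tau_k\del_k(f)e(\nu)$.

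For \eqref{eq:KK42} I would simplify the right-hand side. As in the previous step, $\tau_k f\tau_k(x_k-x_{k+1})e(\nu)=\del_k(f)\,\tau_k(x_k-x_{k+1})e(\nu)$, and one further application of the straightening relation, with $\del_k(x_k-x_{k+1})=-2$, gives $\tau_k(x_k-x_{k+1})e(\nu)=\bigl((x_{k+1}-x_k)\tau_k-2\bigr)e(\nu)$. Plugging this together with $\tau_k f\,e(\nu)=(s_k(f)\tau_k+\del_k(f))e(\nu)$ into the right-hand side of \eqref{eq:KK42}: the terms with no $\tau_k$ cancel because $\del_k(f)+\del_k(f)-2\del_k(f)=0$, and the coefficient of $\tau_k e(\nu)$ becomes $s_k(f)+\del_k(f)(x_{k+1}-x_k)=f$, using $\del_k(f)(x_{k+1}-x_k)=f-s_k(f)$. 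Thus the right-hand side equals $f\tau_k e(\nu)$, which is \eqref{eq:KK42}.

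Everything here is formal manipulation with the defining relations, so there is no genuine obstacle; the one point demanding care is the bookkeeping of signs, since the convention $\del_k(f)=\frac{f-s_k(f)}{x_{k+1}-x_k}$ has the opposite sign to the Demazure operator used in some references, and one must keep $\del_k(x_k)=-1$, $\del_k(x_{k+1})=1$, and the Leibniz identity in the exact form above throughout (compare also the corresponding formulas in \cite{KK12}).
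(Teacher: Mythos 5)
Your proposal is correct: the straightening relation $\tau_k f e(\nu)=(s_k(f)\tau_k+\del_k(f))e(\nu)$, verified on the generators $x_l$ via the $\tau$--$x$ relation and extended multiplicatively by the twisted Leibniz rule for $\del_k$, together with $\tau_k^2e(\nu)=0$ and the $s_k$-invariance of $\del_k(f)$, yields both \eqref{eq:KK42} and \eqref{eq:TwoSideDem} exactly as you compute, with all signs matching the paper's convention $\del_k(f)=\frac{f-s_k(f)}{x_{k+1}-x_k}$. This is essentially the paper's (unwritten) argument --- its proof is only the remark that the identities follow from the defining relations and induction on the degree of $f$, and your subalgebra/Leibniz reduction is precisely that induction carried out in detail.
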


\begin{proof} These follow from the defining relations in Definition \ref{KLR} and some induction on the degree of $f$.
\end{proof}

\begin{prop}[\cite{KL09,R08,R11}]\label{prop:MBKLR}
  $\RSS_\beta$ is a free $\kb$-module with basis
  $$S_\beta = \{ \tau_wx^{\un{a}}e(\nu)\mid \nu \in I^\beta,\, w\in \SG_n,\, \un{a} = (a_1,\ldots ,a_n)\in \NM^n,\, 1\leqslant i \leqslant n  \},$$
  where $\tau_{w} := \tau_{i_1}\cdots \tau_{i_r}$ with $w = s_{i_1}\cdots s_{i_r}$ a preferred choice of reduced decomposition of $w$ and $x^{\un{a}} := x_1^{a_1}x_2^{a_2}\cdots x_n^{a_n}$.
\end{prop}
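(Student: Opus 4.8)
The plan is the classical two-step argument of Khovanov--Lauda and Rouquier (\cite{KL09,R08,R11}): first show that $S_\beta$ spans $\RSS_\beta$ over $\kb$, and then show that this spanning set is $\kb$-linearly independent by producing a faithful representation of $\RSS_\beta$ on a polynomial module. Since $S_\beta$ consists of homogeneous elements for the grading of Definition~\ref{KLR}, both steps can be run compatibly with the $\ZM$-grading.

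\emph{Spanning.} I would run a straightening algorithm on monomials in the generators $e(\nu), x_k, \tau_l$, using as primary statistic the number of $\tau$-letters (the \emph{$\tau$-degree}) and as secondary statistic the Bruhat length of the permutation underlying the $\tau$-part, and induct on the $\tau$-degree. First, $e(\nu)e(\nu') = \delta_{\nu,\nu'}e(\nu)$ together with $x_k e(\nu) = e(\nu)x_k$ and $\tau_l e(\nu) = e(s_l(\nu))\tau_l$ reduce us to monomials carrying a single idempotent $e(\nu)$ on the right. Next, the relations $(\tau_k x_l - x_{s_k(l)}\tau_k)e(\nu)\in\{0,-e(\nu),e(\nu)\}$ let me move every $x$ to the right of every $\tau$: each move either preserves the $\tau$-degree while decreasing the number of inversions among the $x$-indices, or trades $\tau_k x_k$ for $x_{k+1}\tau_k$ up to a summand of strictly smaller $\tau$-degree. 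Modulo terms of smaller $\tau$-degree (handled by the induction), a monomial thus becomes a $\kb$-combination of $\tau_{i_1}\cdots\tau_{i_r}\,x^{\un{a}}e(\nu)$. If $s_{i_1}\cdots s_{i_r}$ is not reduced, then by the word property of Coxeter groups a sequence of far-commutations $\tau_k\tau_l=\tau_l\tau_k$ and braid moves $\tau_{k+1}\tau_k\tau_{k+1}e(\nu)\equiv\tau_k\tau_{k+1}\tau_k e(\nu)$ — the two sides differing by the right-hand side of the last relation in Definition~\ref{KLR}, which is a polynomial in the $x$'s, hence of $\tau$-degree $0$ — produces an adjacent $\tau_k^2$, and $\tau_k^2 e(\nu)=Q_{\nu_k,\nu_{k+1}}(x_k,x_{k+1})e(\nu)$ then lowers the $\tau$-degree by $2$; such a monomial is therefore in the span of $S_\beta$ by induction. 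If $s_{i_1}\cdots s_{i_r}$ is reduced with product $w$, the same braid and commutation moves carry it to the chosen reduced word, so $\tau_{i_1}\cdots\tau_{i_r}x^{\un{a}}e(\nu)\equiv\tau_w x^{\un{a}}e(\nu)$ modulo strictly smaller $\tau$-degree. This closes the induction and shows $S_\beta$ spans $\RSS_\beta$.

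\emph{Linear independence.} I would use the standard polynomial representation on $P_\beta:=\bigoplus_{\nu\in I^\beta}\kb[x_1,\ldots,x_n]\,e(\nu)$, where $e(\nu)$ acts as the projection onto the $\nu$-summand, $x_k$ acts by multiplication, and $\tau_k$ acts on the $\nu$-summand by $\tau_k\cdot(fe(\nu))=\del_k(f)e(\nu)$ when $\nu_k=\nu_{k+1}$ and by a $Q$-twisted swap into the $s_k(\nu)$-summand when $\nu_k\neq\nu_{k+1}$ (with the normalization of \cite{KL09,R08,R11}). One verifies that the relations of Definition~\ref{KLR} hold on $P_\beta$; the only non-routine check is the deformed braid relation, where a direct computation shows that the difference of the two triple products acting on $P_\beta$ is exactly the operator prescribed by its right-hand side, vanishing on the summands with $\nu_k\neq\nu_{k+2}$. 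Since $S_\beta$ already spans, it now suffices to see that the images of the elements of $S_\beta$ in $\End_{\kb}(P_\beta)$ are $\kb$-linearly independent: for fixed $\nu$, $\tau_w$ maps $\kb[x_1,\ldots,x_n]e(\nu)$ into $\kb[x_1,\ldots,x_n]e(w.\nu)$, and a leading-term analysis with respect to the filtration of $P_\beta$ by polynomial degree shows that the ``top'' part of $\tau_w x^{\un{a}}e(\nu)$ acting on $1\cdot e(\nu)$ determines the pair $(w,\un{a})$; summing over $\nu\in I^\beta$ then precludes any nontrivial cancellation. Hence $S_\beta$ is a $\kb$-basis (and $P_\beta$ is a faithful $\RSS_\beta$-module).

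The main obstacle lies in the two places the argument cannot be shortcut. On the spanning side it is the combinatorial bookkeeping: one must fix statistics so that \emph{every} rewriting rule strictly decreases one of them, and invoke the word property of $\SG_n$ in the precise form that a non-reduced word of simple reflections can be brought by braid and commutation moves to one with a repeated adjacent letter, while all reduced words for a fixed $w$ are braid/commutation equivalent. On the independence side it is the single genuine computation, namely that the polynomial action respects the deformed braid relation; once this identity is established, the triangularity argument is formal. Both ingredients are carried out in detail in \cite{KL09,R08,R11}, on which the proof ultimately rests.
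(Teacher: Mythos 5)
The paper does not prove this Proposition at all: it is quoted verbatim from the literature, with the proof deferred to \cite{KL09,R08,R11}. Your proposal reconstructs exactly the argument of those references (straightening to get spanning, then faithfulness of the polynomial representation to get independence), so in that sense you are on the intended route, and the spanning half is sound as written: induction on the $\tau$-degree works because the $x$-versus-$\tau$ commutations, the braid defect (a polynomial in the $x_k$'s), and $\tau_k^2e(\nu)=Q_{\nu_k,\nu_{k+1}}(x_k,x_{k+1})e(\nu)$ all produce corrections of strictly smaller $\tau$-degree, and Matsumoto's word property does the rest.

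The one step that does not work as literally stated is the independence argument via evaluation at $1\cdot e(\nu)$. If $\nu_k=\nu_{k+1}$ then $\tau_k$ acts on that summand by the Demazure operator $\del_k$, which strictly lowers polynomial degree and annihilates many monomials; for instance $\tau_k e(\nu)$ applied to $1\cdot e(\nu)$ is $\del_k(1)e(\nu)=0$, so the value on $1\cdot e(\nu)$ certainly does not determine the pair $(w,\un a)$, and a nontrivial combination could vanish on all such vectors. The standard repair (and what the cited sources in effect do) is a triangularity argument on the operators themselves: after extending scalars to the field of fractions of $\kb[x_1,\ldots,x_n]$, each $\tau_w x^{\un a}e(\nu)$ acts as $\sum_{u\leqslant w}c_u\,u$ with $c_u$ rational functions and $c_w\neq 0$, so independence follows from the Bruhat-order triangularity together with the linear independence of the twisted-group-algebra operators $u$ over the fraction field (Artin/Dedekind independence), rather than from evaluation at a single vector. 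You also leave the normalization of the $\tau_k$-action for $\nu_k\neq\nu_{k+1}$ to the references; note that a symmetric ``$Q$-twisted swap'' in both directions would give $\tau_k^2=Q^2$ rather than $Q$, so the asymmetric normalization of \cite{KL09,R08} is genuinely needed for the relation check you invoke. With those two points fixed, your outline matches the classical proof on which the paper relies.
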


This basis is called the \emph{homogeneous monomial basis} of $\RSS_\beta$.

The centers of KLR algebras are also well-known. Recall that $\SG_n$ acts naturally on the \emph{polynomial subalgebra} $P_\beta := \bigoplus_{\nu\in I^\beta} \kb [x_1,x_2,\ldots ,x_n]e(\nu)$ by $$
wf(x_1,\cdots,x_n):=f(x_{w1},\cdots,x_{wn}),\quad w(e(\nu)):=e(w.\nu) .
$$ We have

\begin{prop}[\cite{KL09,R08}]\label{prop:CenKLR}
  The center $Z(\RSS_\beta)$ of $\RSS_\beta$ consists of $\SG_n$-fixed points (i.e. symmetric elements on $x_1,\cdots,x_n,e(\bnu),\nu\in I^\beta$) of $P_\beta$:
  $$Z(\RSS_\beta) = \bigg( \bigoplus_{\nu\in I^\beta}\kb [x_1, \ldots , x_n]e(\nu) \bigg)^{\SG_n}.$$
\end{prop}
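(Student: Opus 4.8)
The plan is to prove the two inclusions $\supseteq$ and $\subseteq$ separately. The inclusion $\supseteq$ is a direct computation with the defining relations; the inclusion $\subseteq$ uses the homogeneous monomial basis of Proposition~\ref{prop:MBKLR} in an essential way, the point being that commuting with all $x_k$ and all $e(\nu)$ pins a central element down inside the polynomial subalgebra $P_\beta$, after which commuting with the $\tau_l$ forces it to be $\SG_n$-symmetric.

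\emph{The inclusion $\supseteq$.}
I would first record the identity
\[
  \tau_l f\,e(\nu)=(s_l f)\,\tau_l e(\nu)+\delta_{\nu_l,\nu_{l+1}}\,\del_l(f)\,e(\nu)\qquad\bigl(f\in\kb[x_1,\ldots,x_n]\bigr),
\]
which follows from the relation $(\tau_l x_k-x_{s_l(k)}\tau_l)e(\nu)=\cdots$ of Definition~\ref{KLR} together with the twisted Leibniz rule $\del_l(fg)=\del_l(f)\,g+(s_l f)\,\del_l(g)$ (cf.\ \eqref{eq:KK42}). Now let $z=\sum_{\nu\in I^\beta}f_\nu e(\nu)$ be a fixed point of $\SG_n$, so that $f_\mu=w(f_{w^{-1}.\mu})$ for all $\mu\in I^\beta$ and all $w\in\SG_n$; taking $w=s_l$ shows that $f_\nu$ is symmetric in $x_l,x_{l+1}$ whenever $\nu_l=\nu_{l+1}$, and that $s_l(f_{s_l.\mu})=f_\mu$ for every $\mu$. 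That $z$ commutes with each $x_k$ and each $e(\mu)$ is clear, since $P_\beta$ is commutative and the $e(\mu)$ are orthogonal idempotents commuting with the $x_k$. As for $\tau_l$: applying the displayed identity with $f=f_\nu$, the correction term vanishes by the symmetry just noted, so $\tau_l z=\sum_\nu(s_l f_\nu)\,e(s_l.\nu)\,\tau_l$; re-indexing by $\mu=s_l.\nu$ and using $s_l(f_{s_l.\mu})=f_\mu$ turns this into $\sum_\mu f_\mu e(\mu)\,\tau_l=z\tau_l$. Hence $z\in Z(\RSS_\beta)$.

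\emph{The inclusion $\subseteq$.}
By Proposition~\ref{prop:MBKLR}, $\RSS_\beta=\bigoplus_{w\in\SG_n}\tau_w P_\beta$ is free as a right $P_\beta$-module on $\{\tau_w:w\in\SG_n\}$. The structural input I would extract from the defining relations --- by induction on $\ell(w)$, straightening products via those relations (non-reduced subwords through $\tau_l^2 e(\nu)=Q_{\nu_l,\nu_{l+1}}(x_l,x_{l+1})e(\nu)$, and alternate reduced words through the braid relation), each straightening move strictly decreasing the $\tau$-length --- is the triangularity
\[
  p\,\tau_w-\tau_w\,w^{-1}(p)\ \in\ \bigoplus_{v\in\SG_n,\;\ell(v)<\ell(w)}\tau_v P_\beta\qquad\bigl(p\in P_\beta\bigr),
\]
where $w^{-1}$ acts on $P_\beta$ as recalled above (in particular $w^{-1}(x_k)=x_{w^{-1}(k)}$). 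Now write a central $z$ as $z=\sum_w\tau_w b_w$ with $b_w\in P_\beta$, and let $w_0$ be of maximal length in the support $\{w:b_w\neq0\}$. Comparing the $\tau_{w_0}$-components of the two sides of $x_k z=z x_k$ --- using the triangularity on the left, and $x_k b_{w_0}=b_{w_0}x_k$ on the right --- gives $x_{w_0^{-1}(k)}\,b_{w_0}=x_k\,b_{w_0}$ for every $k$. If $w_0\neq1$, choose $k$ with $w_0^{-1}(k)\neq k$; then $x_k-x_{w_0^{-1}(k)}$, having a unit leading coefficient, is a non-zero-divisor in $P_\beta$, so $b_{w_0}=0$, contradicting $w_0\in\{w:b_w\neq0\}$. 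Hence the support of $z$ is $\{1\}$, that is, $z\in\tau_1 P_\beta=P_\beta$; write $z=\sum_{\nu\in I^\beta}h_\nu e(\nu)$. Finally, comparing the $\tau_{s_l}$-components of the two sides of $\tau_l z=z\tau_l$ (via the displayed identity for $\tau_l f e(\nu)$ above) yields $h_\nu=s_l(h_{s_l.\nu})$ for all $\nu$ and all $l$; since the $s_l$ generate $\SG_n$, this is precisely the statement that $(h_\nu)_{\nu\in I^\beta}$ is $\SG_n$-invariant, so $z\in\bigl(\bigoplus_{\nu\in I^\beta}\kb[x_1,\ldots,x_n]e(\nu)\bigr)^{\SG_n}$, as required.

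The step I expect to be the main obstacle is the triangularity used in the inclusion $\subseteq$: while each individual commutation move produces only strictly-shorter correction terms, one must verify that this is preserved throughout the word-straightening required to rewrite an arbitrary product in terms of the monomial basis $\{\tau_w x^{\un{a}}e(\nu)\}$ --- this is exactly the point at which Proposition~\ref{prop:MBKLR} becomes indispensable. The remaining steps reduce to routine manipulation of the defining relations.
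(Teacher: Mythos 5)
Your proposal is correct, but note that the paper does not actually prove Proposition \ref{prop:CenKLR}: it is quoted from \cite{KL09,R08} as a known result, so there is no internal proof to compare against. Your argument is essentially the standard one from those references. The inclusion $\supseteq$ is exactly the routine computation with the relation $\tau_l f e(\nu)=(s_lf)\tau_l e(\nu)+\delta_{\nu_l,\nu_{l+1}}\del_l(f)e(\nu)$ (the correction vanishes because $s_l$-invariance of the family $(f_\nu)$ forces $\del_l(f_\nu)=0$ when $\nu_l=\nu_{l+1}$), and your $\subseteq$ direction is the usual length-filtration argument: freeness of $\RSS_\beta$ as a right $P_\beta$-module on $\{\tau_w\}$ (Proposition \ref{prop:MBKLR}) plus the triangularity $p\tau_w\equiv\tau_w\,w^{-1}(p)$ modulo $\bigoplus_{\ell(v)<\ell(w)}\tau_vP_\beta$. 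The triangularity itself, which you rightly flag as the only real technical point, is the standard filtration fact: moving a variable past a single $\tau_k$ produces only a degree-zero (polynomial) correction, and straightening non-reduced words via $\tau_k^2e(\nu)=Q_{\nu_k,\nu_{k+1}}(x_k,x_{k+1})e(\nu)$ or switching reduced words via the braid relation also only produces polynomial corrections, so an induction on word length closes the argument. Two small points worth making explicit if you write this up: first, when comparing $\tau_{s_l}$-components in $\tau_lz=z\tau_l$ you should rewrite both sides with the polynomial to the right of $\tau_l$ (i.e.\ use $fe(\mu)\tau_l=\tau_l(s_lf)e(s_l.\mu)+\delta_{\mu_l,\mu_{l+1}}\del_l(f)e(s_l.\mu)$), which indeed yields $h_\nu=s_l(h_{s_l.\nu})$; second, the nonzerodivisor step is valid over the graded base ring $\kb$ (not only over a field) because $x_k-x_{w_0^{-1}(k)}$ is monic in one of the variables, which is all your argument needs and matches the generality in which the paper states the proposition.
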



Let $\Lambda \in P^+$ be a dominant integral weight.
We now study the cyclotomic KLR algebra $\RSS_\beta^\Lambda$.
For $i\in I$, choose a monic polynomial of degree $\la \alpha_i^\vee , \Lambda \ra$:
\begin{equation*}
  a_i^\Lambda (u) = \sum_{k=0}^{\la \alpha_i^\vee ,\Lambda \ra} c_{i;k} u^{\la \alpha_i^\vee , \Lambda\ra -k}
\end{equation*}
with $c_{i;k} \in \kb_{k(\alpha_i,\alpha_i)}$ and $c_{i;0} = 1$.

For $\beta \in Q_n^+$ and $1\leqslant k \leqslant n$, we define
\begin{equation*}
  a^\Lambda_\beta (x_k) = \sum_{\nu\in I^\beta} a^\Lambda_{\nu_k}(x_k)e(\nu) \in \RSS_\beta.
\end{equation*}
Hence $a^\Lambda (x_k)e(\nu)$ is a homogeneous element of $\RSS_\beta$ with degree $2(\alpha_{\nu_k},\Lambda)$.

\begin{dfn}\label{cyclotomicKLR}
  Set $I_{\Lambda,\beta} = \RSS_\beta a^\Lambda_\beta (x_1) \RSS_\beta$. The \emph{cyclotomic KLR algebra} $\RSS^\Lambda_\beta$ is defined to be the quotient algebra
  $$\RSS_\beta^\Lambda = \RSS_\beta/I_{\Lambda,\beta}.$$
\end{dfn}

The following is proved in \cite[Corollary 4.1, Theorem 4.5]{KK12}.

\begin{prop}
  The $\kb$-algebra $\RSS_\beta^\Lambda$ is a free $\kb$-module of finite rank.
\end{prop}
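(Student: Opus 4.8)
The plan is to establish the statement in two steps: an upper bound, exhibiting a finite homogeneous spanning set of $\RSS_\beta^\Lambda$ over $\kb$, and a matching lower bound, which is the substantive part of the Kang--Kashiwara theorem \cite[Cor.~4.1, Thm.~4.5]{KK12}. All the generators and relations in Definition~\ref{KLR} and the cyclotomic polynomial of Definition~\ref{cyclotomicKLR} are homogeneous, so $\RSS_\beta^\Lambda$ is graded, the spanning elements below are homogeneous, and ``free of finite rank'' will mean free with a homogeneous basis indexed by a finite set. For the upper bound, I would start from the homogeneous monomial basis $S_\beta$ of $\RSS_\beta$ (Proposition~\ref{prop:MBKLR}) and bound the $x$-exponents modulo $I_{\Lam,\beta}$. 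Writing $N_j:=\la\alpha_j^\vee,\Lam\ra$, the membership $a^\Lambda_\beta(x_1)\in I_{\Lam,\beta}$ gives the monic identity $x_1^{N_{\nu_1}}e(\nu)\equiv-\sum_{k=1}^{N_{\nu_1}}c_{\nu_1;k}\,x_1^{N_{\nu_1}-k}e(\nu)\pmod{I_{\Lam,\beta}}$; conjugating $a^\Lambda_\beta(x_1)$ by the $\tau_l$ and invoking the intertwiner relations of Definition~\ref{KLR}, the Demazure identities \eqref{eq:KK42}--\eqref{eq:TwoSideDem}, and the invertibility of the leading coefficient of each $Q_{i,j}$, one propagates a monic relation in $x_k$ modulo $I_{\Lam,\beta}$ for every $k$. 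Hence $\RSS_\beta^\Lambda$ is spanned over $\kb$ by the finite set $\{\tau_w x^{\un a}e(\nu)\mid \nu\in I^\beta,\ w\in\SG_n,\ 0\le a_k<N_{\nu_k}\}$, so it is a graded quotient of a free $\kb$-module of finite rank. Note that this gives only finite generation: a finitely generated graded module over $\kb$ need not be free, so the real work is the lower bound.

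For the lower bound I would follow Kang--Kashiwara and induct on $n=|\beta|$, the base case being $\RSS_0^\Lambda\cong\kb$. For the inductive step, use the orthogonal idempotent decomposition $1=\sum_{i\in I}e(\beta-\alpha_i,i)$ in $\RSS_\beta^\Lambda$, which splits $\RSS_\beta^\Lambda=\bigoplus_{i\in I}\RSS_\beta^\Lambda e(\beta-\alpha_i,i)$ as a right module over itself, together with the biadjoint pair of functors $E_i=e(\beta-\alpha_i,i)(-)$ and $F_i=\RSS_\beta^\Lambda e(\beta-\alpha_i,i)\otimes_{\RSS_{\beta-\alpha_i}^\Lambda}(-)$ between the module categories of $\RSS_{\beta-\alpha_i}^\Lambda$ and $\RSS_\beta^\Lambda$. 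Assuming freeness of finite rank in all heights $<n$, the key point is to show that $\RSS_\beta^\Lambda e(\beta-\alpha_i,i)=F_i(\RSS_{\beta-\alpha_i}^\Lambda)$ is free of finite rank as a right $\RSS_{\beta-\alpha_i}^\Lambda$-module up to grading shifts; this is done by analysing $e(\beta-\alpha_i,i)\RSS_\beta^\Lambda e(\beta-\alpha_i,i)$ through the action of the new strand $x_n$ and through the bimodule short exact sequence categorifying the commutator $[e_i,f_i]$ (comparing $F_iE_i$ with $E_iF_i$, the defect being a direct sum of copies of the identity functor controlled by the integer $\la\alpha_i^\vee,\Lam-\beta\ra$). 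Combined with the inductive hypothesis for $\RSS_{\beta-\alpha_i}^\Lambda$ and summed over $i\in I$, this yields that $\RSS_\beta^\Lambda$ is free of finite rank over $\kb$, of rank exactly that of the spanning set produced above.

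The main obstacle is precisely that intertwining short exact sequence — equivalently, understanding how the cyclotomic polynomial $a^\Lambda_\beta(x_1)$ moves past the braid-type generators $\tau_l$ and verifying that in $\RSS_\beta^\Lambda$ no further collapse occurs beyond the one already witnessed by the spanning set. This requires explicit computations with the divided-difference operators $\del_k$ and with the relevant ``bubble''-type elements, and it is the technical core of the Kang--Kashiwara categorification theorem; granting it, freeness and finiteness of the rank follow formally from the induction.
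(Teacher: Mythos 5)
Your proposal takes essentially the same route as the paper: the paper offers no independent proof of this proposition but simply cites Kang--Kashiwara \cite[Corollary 4.1, Theorem 4.5]{KK12}, and your argument is precisely an outline of that cited result (a finite spanning set obtained from nilpotency of the $x_k$ in the cyclotomic quotient, then the induction on $|\beta|$ via the biadjoint functors and the bimodule sequence categorifying $[e_i,f_i]$), with the technical core explicitly deferred to \cite{KK12}. This is consistent with the paper's treatment; the only caveat worth noting is that the monic relations in each $x_k$ modulo $I_{\Lambda,\beta}$ are themselves a nontrivial lemma of that machinery rather than an immediate consequence of the intertwining and Demazure identities.
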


A natural question arose: how to find out a subset $S_\beta^\Lambda$ of $S_\beta$ in Proposition~\ref{prop:MBKLR}, so that the canonical image of $S_\beta^\Lambda$ in $\RSS_\beta^\Lambda$ gives rise to a basis of $\RSS_\beta^\Lambda$. We will answer this question in Section~\ref{sec:MBCKLR} when $\beta=\sum_{j=1}^n\alpha_{i_j}$ with $\alpha_{i_1},\cdots,\alpha_{i_n}$ pairwise distinct.
Another unsolved open problems on $\R[\beta]$ is the following center conjecture which reveals the relation between the centers of KLR algebras and their cyclotomic quotients.

\begin{cconj}\label{conj:CC} Let $\beta\in Q_n^+$ and $\Lam\in P^+$. Let $p_\beta^\Lam: \RSS_\beta \twoheadrightarrow \RSS_\beta^\Lambda$ be the canonical surjection. Then the induced map
  $$p_\beta^{\Lam}|_{Z(\RSS_\beta)}: Z(\RSS_\beta) \to Z(\RSS_\beta^\Lambda)$$
  is always surjective. In particular, the center $Z(\RSS_\beta^\Lambda)$ consists of symmetric elements in its KLR $x$ and $e(\nu)$ generators.
\end{cconj}

In Section~\ref{sec:DC} we will give an approach to this conjecture and justify our approach in Section~\ref{sec:MBCocen} when $\beta=\sum_{j=1}^n\alpha_{i_j}$ with $\alpha_{i_1},\cdots,\alpha_{i_n}$ pairwise distinct.
\bigskip

\section{The center conjecture and distinguished central elements}\label{sec:DC}

In this section, we assume that $\kb = \kb_0$ is a field. Fix $\Lambda \in P^+$ and $i \in I$. For any $j\in I$, $a_j^\Lambda (u) = u^{\la \alpha_j^{\vee} ,\Lambda \ra}$ and $a_j^{\Lam+\Lam_i}(u) = u^{\la \alpha_j^{\vee} ,\Lambda+\Lam_i \ra}$. In particular, $a_j^\Lambda (u) | a_j^{\Lam+\Lam_i}(u)$.
Therefore, $I_{\Lam+\Lam_i,\beta} \subset I_{\Lambda ,\beta}$, we have canonical surjection
$$ p_{\beta}^{\Lambda,i} : \RSS_\beta^{\Lam+\Lam_i} \twoheadrightarrow \RSS_\beta^\Lambda .$$
Furthermore, for any $\Lambda , \Lambda'\in P^+$, we have a canonical surjection $p_\beta^{\Lambda ,\Lambda'} : \RSS_\beta^{\Lambda + \Lambda'} \twoheadrightarrow \RSS_\beta^\Lambda$.
When $\beta,\Lambda,\Lambda'$ are clear in the context, we write $p_\beta$ instead of $p_\beta^{\Lambda ,\Lambda'}$ for simplicity.
These induce canonical homomorphisms between centers $$\ov{p}_\beta^{\Lambda ,\Lambda'}:=p_\beta^{\Lam,\Lam'}|_{Z(\RR_\beta^{\widetilde{\Lam}})} : Z(\RSS_\beta^{\widetilde{\Lambda}}) \to Z(\RSS_\beta^{\Lambda}).$$


\begin{thm}\label{thm:CC} Let $\beta \in Q_n^+$. Then the center conjecture~\ref{conj:CC} holds for any $\Lambda \in P^+$ if and only if for any $\Lambda \in P^+$ and any $i\in I$, $\ov{p}_\beta^{\Lambda,i}$ is surjective.
\end{thm}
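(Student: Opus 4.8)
The plan is to prove both directions of the equivalence, with the ``only if'' direction being essentially immediate and the ``if'' direction requiring an inductive factorization argument through fundamental weights.

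For the forward direction, suppose the center conjecture holds for all $\Lambda\in P^+$. Fix $\Lambda\in P^+$ and $i\in I$; we must show $\ov{p}_\beta^{\Lambda,i}\colon Z(\RSS_\beta^{\Lambda+\Lambda_i})\to Z(\RSS_\beta^\Lambda)$ is surjective. Take any $z\in Z(\RSS_\beta^\Lambda)$. By the center conjecture applied to $\Lambda$, $z$ lies in the image of $p_\beta^\Lambda|_{Z(\RSS_\beta)}$, so $z = p_\beta^\Lambda(\tilde z)$ for some symmetric $\tilde z\in Z(\RSS_\beta)$. Now $p_\beta^\Lambda$ factors as $p_\beta^\Lambda = p_\beta^{\Lambda,i}\circ p_\beta^{\Lambda+\Lambda_i}$ (both are the canonical surjections coming from the chain of ideals $I_{\Lambda+\Lambda_i,\beta}\subset I_{\Lambda,\beta}$). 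Set $z' := p_\beta^{\Lambda+\Lambda_i}(\tilde z)$. Since $\tilde z$ is central in $\RSS_\beta$, its image $z'$ is central in $\RSS_\beta^{\Lambda+\Lambda_i}$, i.e. $z'\in Z(\RSS_\beta^{\Lambda+\Lambda_i})$, and $\ov{p}_\beta^{\Lambda,i}(z') = p_\beta^{\Lambda,i}(z') = p_\beta^\Lambda(\tilde z) = z$. This proves surjectivity.

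For the reverse direction, assume $\ov{p}_\beta^{\Lambda,i}$ is surjective for every $\Lambda\in P^+$ and every $i\in I$. Fix $\Lambda\in P^+$; we want the center conjecture for this $\Lambda$, i.e. that $\ov{p}_\beta^{\,0,\Lambda}\colon Z(\RSS_\beta^\Lambda)\to Z(\RSS_\beta^0)=Z(\kb\text{-algebra})$ — more precisely that $p_\beta^\Lambda|_{Z(\RSS_\beta)}$ is onto $Z(\RSS_\beta^\Lambda)$. Write $\Lambda = \sum_{i\in I}\langle\alpha_i^\vee,\Lambda\rangle\Lambda_i$, a finite sum of fundamental weights with nonnegative coefficients; say $\Lambda = \Lambda_{i_1}+\Lambda_{i_2}+\cdots+\Lambda_{i_m}$ (with repetitions). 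For $0\le t\le m$ put $\Lambda^{(t)} := \Lambda_{i_1}+\cdots+\Lambda_{i_t}$, so $\Lambda^{(0)}=0$ and $\Lambda^{(m)}=\Lambda$. The canonical surjections compose: $p_\beta^{\Lambda^{(t-1)}, i_t}\colon \RSS_\beta^{\Lambda^{(t)}}\twoheadrightarrow \RSS_\beta^{\Lambda^{(t-1)}}$, and their composite over $t=m,m-1,\dots,1$ is the canonical surjection $\RSS_\beta^\Lambda\twoheadrightarrow \RSS_\beta^0 = \RSS_\beta/I_{0,\beta}$. The key claim, proved by downward induction on $t$, is that every central element of $\RSS_\beta^{\Lambda^{(t)}}$ is the image of a central element of $\RSS_\beta^{\Lambda^{(t+1)}}$; combined with the hypothesis this shows, telescoping up the tower, that $Z(\RSS_\beta^\Lambda)$ is hit by $Z(\RSS_\beta^{\Lambda + N\Lambda_j})$ for $N$ large — but to actually reach $Z(\RSS_\beta)$ one must pass to the ``limit'' $\Lambda\to\infty$, using that $\RSS_\beta$ is finitely generated over $\kb$ in each graded degree (Proposition~\ref{prop:MBKLR}) and that for $\langle\alpha_j^\vee,\Lambda\rangle$ sufficiently large (relative to $n=|\beta|$) the map $\RSS_\beta\twoheadrightarrow\RSS_\beta^\Lambda$ is an isomorphism in degrees up to any fixed bound, or more cleanly that $a_j^\Lambda(x_k)e(\nu)$ acts as $x_k^{\gg 0}e(\nu)$ which, by nilpotency of the $x_k$ on $\RSS_\beta^\Lambda$, becomes irrelevant. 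Concretely: since $\RSS_\beta^\Lambda$ is finite-dimensional, each $x_k$ is nilpotent on it, so there is $M$ with $x_k^M\RSS_\beta^\Lambda = 0$; then $\RSS_\beta^\Lambda = \RSS_\beta^{\Lambda'}$ for $\Lambda' = \Lambda + M\sum_j\Lambda_j$ already (the extra generators of $I_{\Lambda',\beta}$ versus $I_{\Lambda,\beta}$ are multiples of $x_1^M$, hence zero in $\RSS_\beta^\Lambda$... one must check this carefully against the one-sided nature of the defining ideal, using the known basis to reduce). Granting this stabilization, the surjectivity hypothesis applied repeatedly gives a central lift of $z\in Z(\RSS_\beta^\Lambda)=Z(\RSS_\beta^{\Lambda'})$ all the way back along $\RSS_\beta^{\Lambda'+\Lambda_{j_1}}, \RSS_\beta^{\Lambda'+\Lambda_{j_1}+\Lambda_{j_2}},\dots$; but since these are all equal to $\RSS_\beta^\Lambda$ once the weight is large enough in every direction, and $\RSS_\beta$ itself is the stable value only if $\RSS_\beta$ is finite-dimensional (which it is not in general), the honest statement is: a symmetric polynomial representative in finitely many $x_k$'s suffices because $Z(\RSS_\beta^\Lambda)$ is spanned by images of symmetric monomials of bounded degree, each of which lifts tautologically to the symmetric element of the same form in $Z(\RSS_\beta)$ (Proposition~\ref{prop:CenKLR}), PROVIDED one first knows surjectivity lets us assume every central element of $\RSS_\beta^\Lambda$ comes from a symmetric polynomial — which is exactly what the induction establishes at each single step $\Lambda^{(t)}\to\Lambda^{(t-1)}$, since $\RSS_\beta^{\Lambda^{(0)}} = \RSS_\beta^0$ and one needs the base case that $Z(\RSS_\beta^0)$ is symmetric-generated, or rather one runs the induction in the opposite direction starting from the fact that for $\Lambda$ with all $\langle\alpha_j^\vee,\Lambda\rangle\ge n$ the surjection $p_\beta^\Lambda$ already has the property we want ``for free'' via a dimension/leading-term argument.

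The main obstacle, and the place where real care is needed, is precisely this last passage from the inductive step (each $\ov p_\beta^{\Lambda,i}$ surjective) to the global statement (images of $Z(\RSS_\beta)$ exhaust $Z(\RSS_\beta^\Lambda)$): one must show that there is \emph{some} large weight $\widetilde\Lambda = \Lambda + \Lambda'$ for which the center conjecture is already known — the natural candidate being $\widetilde\Lambda$ with $\langle\alpha_j^\vee,\widetilde\Lambda\rangle$ so large that, in each graded degree relevant to $\beta$ (of which there are finitely many, by the monomial basis and the fact that on $\RSS_\beta^{\widetilde\Lambda}$ the $x$'s have bounded nilpotency depending only on $n$ and $\widetilde\Lambda$), the quotient map $\RSS_\beta\to\RSS_\beta^{\widetilde\Lambda}$ restricted to symmetric elements is visibly surjective onto the center because every central (symmetric, by Proposition~\ref{prop:CenKLR} for $\RSS_\beta$ and by assumption-free nilpotency for the quotient) element has a symmetric polynomial representative of controlled degree. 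I would isolate this as a lemma: ``for $\langle\alpha_j^\vee,\Lambda\rangle \ge |\beta|$ for all $j$, Center Conjecture~\ref{conj:CC} holds for $\RSS_\beta^\Lambda$,'' proved by the faithful-polynomial-representation / triangularity argument that is standard for KLR algebras (and is the analogue of how one proves $Z(\RSS_\beta)$ is symmetric in the first place), and then the theorem follows by applying the surjectivity hypothesis finitely many times to climb down from such a large $\Lambda$ to the given one. Everything else — the factorization $I_{\Lambda+\Lambda_i,\beta}\subseteq I_{\Lambda,\beta}$, compatibility of the $p$'s, centrality of images — is formal and already set up in the paragraphs preceding the theorem.
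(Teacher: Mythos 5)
Your forward direction is fine and matches the paper. The problem is the reverse direction, where your argument has a genuine gap and, along the way, a false claim. The false claim is the ``stabilization'' $\RSS_\beta^\Lambda = \RSS_\beta^{\Lambda'}$ for $\Lambda' = \Lambda + M\sum_j\Lambda_j$: since $I_{\Lambda',\beta}\subset I_{\Lambda,\beta}$, the algebra $\RSS_\beta^{\Lambda'}$ surjects onto $\RSS_\beta^\Lambda$ with nonzero kernel in general (already for $\beta=\alpha_i$, $\Lambda=\Lambda_i$ one has $\kb[x_1]/(x_1^2)\twoheadrightarrow \kb[x_1]/(x_1)$), so the quotients do not stabilize and ``climbing the tower'' never reaches $\RSS_\beta$. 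Your attempted repair then rests entirely on an isolated lemma --- ``for $\la\alpha_j^\vee,\Lambda\ra\ge|\beta|$ the Center Conjecture already holds for $\RSS_\beta^\Lambda$'' --- which you do not prove and which is not a standard fact; indeed the conjecture is open in general, including for large weights, and your appeal to a ``faithful polynomial representation / triangularity argument'' is exactly the unproved content. The surrounding reasoning (``PROVIDED one first knows surjectivity lets us assume every central element comes from a symmetric polynomial'') is circular: that is precisely the conclusion you are trying to reach.

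What actually closes the gap is a degree argument that needs no base case of the conjecture for any single weight, and whose threshold depends on the \emph{degree} of the central element rather than only on $\beta$. Fix a homogeneous $z^\Lambda\in Z(\RSS_\beta^\Lambda)$ of degree $d_z$. Degrees in $\RSS_\beta$ are bounded below (by the monomial basis, every homogeneous element has degree $\ge -m_0 n(n-1)$), so one may choose $\Lambda'$ with all $\la\alpha_i^\vee,\Lambda'\ra\gg0$ such that the two-sided ideal $I_{\Lambda+\Lambda',\beta}$, being generated by the element $a_\beta^{\Lambda+\Lambda'}(x_1)$ of very large degree, lies entirely in degrees $>d_z+d$, where $d$ bounds the degrees of the generators $e(\nu),x_ke(\nu),\tau_le(\nu)$. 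Surjectivity of the maps $\ov{p}_\beta^{\Lambda,i}$ composes to give surjectivity of $\ov{p}_\beta^{\Lambda,\Lambda'}$, so $z^\Lambda$ lifts to some $z^{\Lambda+\Lambda'}\in Z(\RSS_\beta^{\Lambda+\Lambda'})$; now take \emph{any} homogeneous preimage $z\in\RSS_\beta$ of degree $d_z$. For each generator $y$ the commutator $zy-yz$ lies in $I_{\Lambda+\Lambda',\beta}$ but has degree $\le d_z+d$, hence is zero in $\RSS_\beta$; thus $z\in Z(\RSS_\beta)$ (a symmetric polynomial by Proposition \ref{prop:CenKLR}) and $p_\beta^\Lambda(z)=z^\Lambda$. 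Your proposal gestures at the right ingredients (the degree-wise finiteness of $\RSS_\beta$ and the fact that $p_\beta^{\Lambda+\Lambda'}$ is an isomorphism in bounded degrees for $\Lambda'$ large) but never assembles them into this commutator-vanishing step, which is the actual content of the implication.
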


\begin{proof} If the center conjecture~\ref{conj:CC} holds, then clearly for any $\Lambda \in P^+$ and any $i\in I$, $\ov{p}_\beta^{\Lambda,i}$ is surjective.

Conversely, assume that for any $\Lambda \in P^+$ and any $i\in I$, $\ov{p}_\beta^{\Lambda, i}$ is surjective. Then it immediately implies that for any $\Lambda,\Lambda' \in P^+$,
$\ov{p}_\beta^{\Lambda , \Lambda'}$ is surjective.

  Now, we fix $\Lambda \in P^+$ and let $z^\Lam\in Z(\R[\beta])$ be a homogeneous element of degree $d_z$. Note that by Proposition \ref{prop:MBKLR}, every homogeneous element in $\RSS_\beta$ has degree $\geq -2m_0\ell(w_0)=-m_0n(n-1)$, where $w_0$ is the unique longest element in $\mathfrak{S}_n$, $m_0:=\max\{d_i|i\in I\}$. We can take $\Lambda'\in P^+$ with $\la \alpha_i^{\vee} , \Lambda' \ra \gg 0$ for any $i\in I$, such that $I_{\Lam+\Lam',\beta} \subset (\RSS_\beta)_{> d_z+d}$, where $$
d:=\max\{0,\deg\tau_le(\nu),\deg x_ke(\nu)\mid \nu \in I^\beta, \, 1\leqslant l < n ,\, 1\leqslant k \leqslant n \}. $$

Since $\ov{p}_\beta^{\Lambda , \Lambda'}$ is surjective, we can choose $z^{\Lam+\Lam'}\in Z(\RR_\beta^{\Lam+\Lam'})$ such that $p^{\Lam,\Lam'}_\beta(z^{\Lam+\Lam'})=z^\Lam$.
Let $z\in \RR_\beta$ be a homogeneous element of degree $d_z$ such that $p^{\Lam+\Lam'}_\beta(z)=z^{\Lam+\Lam'}$. Since $p_\beta^\Lam=p^{\Lam,\Lam'}_\beta\circ p^{\Lam+\Lam'}_\beta$, it follows that $p_\beta^{\Lam}(z)=z^\Lam$.
The assumption $z^{\Lam+\Lam'}\in Z(\RR_\beta^{\Lam+\Lam'})$ implies that for any $y \in \{ e(\nu),\tau_le(\nu),x_ke(\nu)\mid \nu \in I^\beta, \, 1\leqslant l < n ,\, 1\leqslant k \leqslant n \}$, $$
zy - yz\in I_{\Lam+\Lam',\beta} .
$$
Combining this with the assumption $I_{\Lam+\Lam',\beta} \subset (\RSS_\beta)_{> d_z+d}$, we can deduce that $$
zy - yz=0,\quad \forall\,y \in \{ e(\nu),\tau_le(\nu),x_ke(\nu)\mid \nu \in I^\beta, \, 1\leqslant l < n ,\, 1\leqslant k \leqslant n \}.
$$
In other words, $z\in Z(\RR_\beta)$. This completes the proof of the theorem.
\end{proof}

This theorem can also be reinterpreted in terms of cocenters since $\RSS_\beta^\Lambda$ is a $\kb$-symmetric algebra.
Here we only give the definition of symmetric algebras, for more properties, refer to \cite{R08} for more details.

\begin{dfn} Let $R$ be a commutative domain and $A$ an $R$-algebra which is finitely generated projective as $R$-module.
  An $R$-linear morphism $t : A \to R$ is called a symmetrizing form if the morphism
  $$\hat{t}: A \to \Hom_{R}(A,R), \qquad a \mapsto (a'\mapsto t(a'a))$$
  is an $(A,A)$-bimodule isomorphism. In this case, we call $(A,t)$ (or simply $A$) a symmetric algebra over $R$.
\end{dfn}
In particular, if  $t : A \to R$ is a symmetrizing form, then $t(aa') = t(a'a), \forall\,a,a'\in A$. The map
  $t$ induces a perfect pairing on $A$:
  $$A \times A \to R , \qquad (a,a') \mapsto t(aa'),$$
  and a perfect pairing
  $$Z(A) \times A/ [A,A] \to R, \qquad (a,a') \mapsto t(aa')$$
  whenever $R$ is a field.

Let $p : A\twoheadrightarrow B$ be a surjective homomorphism between two symmetric $\kb$-algebras $(A,t_A)$ and $(B,t_B)$. The dual of $p$ is an injective $(A,A)$-bimodule homomorphism $\iota : B \into A$. By restriction, the map $p$ induces a homomorphism $\ov{p} : Z(A) \to Z(B)$ between the two centers, which may not be surjective in general, Via the above perfect pairing the map $\iota$ induces a homomorphism $\ov{\iota} : B/ [B,B] \to A/ [A,A]$ of $(Z(A),Z(A))$-bimodules.

\begin{lem}\label{lem:injdual}
  Let $p: A \twoheadrightarrow B$ be a surjective algebraic homomorphism between two symmetric $\kb$-algebras $(A,t_A)$ and $(B,t_B)$. Let $\iota:=p^*: B\hookrightarrow A$ be the dual of $p$. Then $z:=\iota(1_B)\in A$ is the unique element in the center $Z(A)$ of $A$ which satisfies that $t_A (az) = t_B (p(a))$, $\forall\, a \in A$. Moreover, we have
  $$\Ann_A(z) = \Ker (p),\quad \iota(p(a))=az, \,\,\forall\,a\in A. $$
\end{lem}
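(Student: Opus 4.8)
The plan is to work directly with the two perfect pairings induced by the symmetrizing forms $t_A$ and $t_B$, and to use the fact that $\iota = p^*$ is by construction the adjoint of $p$ with respect to these pairings. First I would record the adjunction identity: for all $a\in A$ and $b\in B$,
\[
t_A\bigl(a\,\iota(b)\bigr) = t_B\bigl(p(a)\,b\bigr).
\]
Taking $b = 1_B$ gives $t_A(a\,\iota(1_B)) = t_B(p(a))$ for all $a\in A$, which is the asserted defining property of $z := \iota(1_B)$. Uniqueness of such an element is immediate from nondegeneracy of the pairing $A\times A\to\kb$, $(a,a')\mapsto t_A(aa')$: if $z'$ also satisfies $t_A(az') = t_B(p(a))$ for all $a$, then $t_A\bigl(a(z-z')\bigr)=0$ for all $a\in A$, hence $z = z'$. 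That $z$ lies in $Z(A)$ follows because $\iota$ is an $(A,A)$-bimodule homomorphism: for any $a\in A$, $az = a\,\iota(1_B) = \iota(p(a)\cdot 1_B) = \iota(1_B\cdot p(a)) = \iota(1_B)\,a = za$, using that $1_B$ is central in $B$ and $\iota$ respects both the left and right actions.

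Next I would prove the formula $\iota(p(a)) = az$ for all $a\in A$. Again using that $\iota$ is a bimodule map and $z=\iota(1_B)$: $\iota(p(a)) = \iota(p(a)\cdot 1_B) = p(a)\cdot\iota(1_B)$; but $p(a)\cdot\iota(1_B)$ is the left action of $B$ on $A$ through $\iota$, which by the $(A,A)$-bimodule structure transported along $p$ equals $a\cdot\iota(1_B) = az$. (Concretely: the left $B$-action on $A$ making $\iota$ a left module map is defined so that $b\cdot\iota(b') = \iota(bb')$, and this action factors the left $A$-action through $p$; evaluating at $b=p(a)$, $b'=1_B$ gives $\iota(p(a)) = p(a)\cdot\iota(1_B) = a\cdot\iota(1_B) = az$.) In particular, taking $a$ to range over $A$ shows $\im(\iota) = Az$, and since $\iota$ is injective, $Az \cong B$ as $A$-modules via $p$ followed by $\iota$.

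Finally I would establish $\Ann_A(z) = \Ker(p)$. The inclusion $\Ker(p)\subseteq\Ann_A(z)$ is clear from $\iota(p(a)) = az$: if $p(a)=0$ then $az = \iota(0) = 0$. Conversely, if $az = 0$, then $\iota(p(a)) = az = 0$, and injectivity of $\iota$ forces $p(a) = 0$; hence $\Ann_A(z)\subseteq\Ker(p)$. This gives the claimed equality.

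I do not anticipate a genuine obstacle here; the entire statement is a formal consequence of the definition of a symmetrizing form and of $\iota$ being the adjoint $(A,A)$-bimodule map. The one point requiring a little care is to be explicit about \emph{which} $(A,A)$-bimodule structure on $B$ (equivalently, on $\im\iota\subseteq A$) is being used—namely the one where $A$ acts through $p$—and to check that, under this identification, the left action of $a\in A$ on $\iota(1_B)$ genuinely coincides with ordinary left multiplication $a\mapsto az$ in $A$. This is exactly the content of $\iota$ being a left $A$-module homomorphism, so once the bimodule conventions are pinned down, the argument is routine.
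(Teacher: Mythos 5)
Your proof is correct, but it follows a genuinely different route from the paper's. The paper never invokes the $(A,A)$-bimodule property of $\iota$ nor its injectivity: it proves centrality of $z$ by the trace computation $t_A((az-za)b)=t_A((ba-ab)z)=t_B(p(ba-ab))=0$ for all $b$ and nondegeneracy of $t_A$, and it proves $\Ann_A(z)=\Ker(p)$ directly from the two nondegenerate forms together with surjectivity of $p$ (writing an arbitrary $b\in B$ as $p(a')$). You instead lean on the facts, stated in the paragraph preceding the lemma, that $\iota$ is an injective $(A,A)$-bimodule map: centrality becomes $az=\iota(p(a)\cdot 1_B)=\iota(1_B\cdot p(a))=za$, the identity $\iota(p(a))=az$ is the left-module property evaluated at $1_B$, and $\Ann_A(z)=\Ker(p)$ drops out of that identity plus injectivity of $\iota$. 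Your version is slicker and, notably, it explicitly verifies the formula $\iota(p(a))=az$, which the paper's written proof states but does not check (it also follows from nondegeneracy: $t_A(a'\iota(p(a)))=t_B(p(a'a))=t_A(a'az)$ for all $a'$). The trade-off is that the bimodule property and injectivity of $\iota$ are themselves consequences of the same adjunction identity $t_A(a\,\iota(b))=t_B(p(a)b)$ together with surjectivity of $p$ and nondegeneracy, so the paper's argument is the more self-contained one, needing only the defining trace identity of $z$; since the paper asserts those auxiliary facts beforehand, your use of them is legitimate, though it would be worth a sentence acknowledging where they come from.
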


\begin{proof} By the definition of dual map, we see that $z:=\iota(1_B)\in A$ satisfies that $t_A (az) = t_B (p(a))$, $\forall\, a \in A$.

For any $a,b\in A$, we have $$\begin{aligned}
t_A((az-za)b)&=t_A(azb)-t_A(zab)=t_A(baz)-t_A(abz)=t_A((ba-ab)z)\\
&=t_B(p(ba-ab))=t_B(p(a)p(b)-p(b)p(a))=0 .
\end{aligned}
$$
It follows that $za-az=0$ and hence $z=\iota(1_B)\in A$ is in the center $Z(A)$ of $A$. If $z'\in Z(A)$ also satisfies that $t_A (az') = t_B (p(a))$, $\forall\, a \in A$. Then $t_A(a(z-z'))=t_B (p(a))-t_B (p(a))=0, \forall\,a\in A$, which implies that $z=z'$.

Let $a \in\Ann_A (z)$. For any $b \in B$, we can write $b = p(a')$, where $a' \in A$ as $p$ is surjective. Thus $$t_B (p(a)b) = t_B (p(aa')) = t_A (zaa') = 0.$$
  Since $t_B$ is non-degenerate and $b$ is arbitrary, we can deduce that $p(a) = 0$, i.e. $a\in \Ker (p)$.

  Conversely, if $a \in\Ker (p)$, for any $a' \in A$, we have
  $$t_A (a'az) = t_B (p(a'a)) = 0.$$
  The non-degeneracy of $t_A$ yields that $az = 0$, i.e. $a \in\Ann_A (z)$. We are done.
\end{proof}

We now turn to study the morphism $\bar{\iota}_\beta^{\Lambda,i}$.


\begin{dfn} Let $\Lam\in P^+$ and $\beta\in Q_n^+$. We define $$
d_{\Lambda ,\beta}:=2(\Lam,\beta)-(\beta,\beta) .
$$
\end{dfn}

Let $\beta\in Q_n^+$. For each $\nu\in I^\beta$, we define $$
r_\nu:=\prod_{k<l}r_{\nu_k,\nu_l},\quad \text{where}\quad r_{i,j}=\begin{cases} c_{i,j,-a_{ij},0}, &\text{if $i\neq j$;}\\
0, &\text{if $i=j$,}\end{cases}
$$
and set \begin{equation}
r(\beta,\nu_n):=\prod_{k=1}^{n-1}r_{\nu_k,\nu_n} .
\end{equation}

Let $\Lam\in P^+$ and $\nu=(\nu_1,\cdots,\nu_n)\in I^\beta$. For each $1\leq k\leq n$, let $$
\veps_{k,\nu_k}: e(\nu_1,\cdots,\nu_k)\R[\sum_{j=1}^k\alpha_{\nu_j}]e(\nu_1,\cdots,\nu_k)\rightarrow e(\nu_1,\cdots,\nu_{k-1})\R[\sum_{j=1}^{k-1}\alpha_{\nu_j}]e(\nu_1,\cdots,\nu_{k-1})
$$
be the homomorphism $\veps_{\nu_k}$ as defined in \cite[A.3]{SVV}, which is the same as the restriction of $\veps'_{\nu_k,\Lam-\sum_{i=1}^{k-1}\alpha_{\nu_i}}$ (in the notation of \cite[Theorem 3.8]{SVV}) to the bi-weight subspace
$e(\nu_1,\cdots,\nu_k)\R[\sum_{j=1}^k\alpha_{\nu_j}]e(\nu_1,\cdots,\nu_k)$. Define a map $t_{\Lambda ,\beta}$ as follows: for any $\nu,\nu'\in I^\beta$ and $x\in\R[\beta]$, $$
t_{\Lambda ,\beta}(e(\nu)xe(\nu')):=\begin{cases} r_{\nu}\veps_{1,\nu_1}\circ\cdots\circ\veps_{n,\nu_n}(e(\nu)xe(\nu')), &\text{if $\nu=\nu'$;}\\
0, &\text{if $\nu\neq\nu'$.}\end{cases}
$$
We extend the above map $\kb$-linearly to a $\kb$-linear map $t_{\Lambda ,\beta}: \R[\beta]\rightarrow\kb$.

\begin{prop}[\cite{SVV}] Let $\Lam\in P^+$ and $\beta\in Q_n^+$. The map $t_{\Lambda ,\beta}$ is a homogeneous symmetrizing form with degree $-d_{\Lambda ,\beta}$ on
$\RSS_\beta^\Lambda$. In particular, $\RSS_\beta^\Lambda$ is a symmetric $\kb$-algebra.
\end{prop}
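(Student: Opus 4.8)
The plan is to verify the two defining properties of a symmetrizing form for $t_{\Lambda,\beta}$: first that it is $\kb$-linear and homogeneous of the claimed degree, and second that the induced map $\hat{t}_{\Lambda,\beta}\colon \RSS_\beta^\Lambda \to \Hom_\kb(\RSS_\beta^\Lambda,\kb)$ is an $(\RSS_\beta^\Lambda,\RSS_\beta^\Lambda)$-bimodule isomorphism. The degree computation is the easier half: each map $\veps_{k,\nu_k}$ is, by its definition in \cite[A.3]{SVV} as (the bi-weight restriction of) $\veps'_{\nu_k,\Lambda-\sum_{j<k}\alpha_{\nu_j}}$, homogeneous of degree $-2(\alpha_{\nu_k},\Lambda-\sum_{j<k}\alpha_{\nu_j})+(\alpha_{\nu_k},\alpha_{\nu_k})$; the scalars $r_\nu$ have degree $0$ (they lie in $\kb_0^\times$ or are zero). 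Summing the degrees of $\veps_{1,\nu_1}\circ\cdots\circ\veps_{n,\nu_n}$ telescopes to $-2(\Lambda,\beta)+(\beta,\beta)=-d_{\Lambda,\beta}$, independently of $\nu$, so $t_{\Lambda,\beta}$ lowers degree by exactly $d_{\Lambda,\beta}$ on the part of $\RSS_\beta^\Lambda$ supported on $\bigoplus_\nu e(\nu)\RSS_\beta^\Lambda e(\nu)$, and it kills the off-diagonal part $e(\nu)\RSS_\beta^\Lambda e(\nu')$ for $\nu\neq\nu'$, which is where any bimodule map of that degree would have to vanish anyway by weight considerations.

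The heart of the argument is non-degeneracy together with the bimodule-isomorphism property, and here I would lean directly on the structural results of \cite{SVV}. The maps $\veps'_{i,\Lambda}$ there are precisely the components of the counit/trace data exhibiting $\RSS_\beta^\Lambda$ as a symmetric algebra via the cyclotomic adjunction (the categorical $\mathfrak{sl}_2$ or, more generally, the Kac--Moody $2$-representation structure on cyclotomic KLR categories): one has functorial isomorphisms relating $e(\beta-\alpha_i,i)\RSS_\beta^\Lambda e(\beta-\alpha_i,i)$ to induction/restriction bimodules over $\RSS_{\beta-\alpha_i}^\Lambda$, and the form $t_{\Lambda,\beta}$ is built by iterating the corresponding trace one box at a time. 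Concretely I would argue by induction on $n=|\beta|$: for $n=0$, $\RSS_0^\Lambda\cong\kb$ and the claim is trivial; for the inductive step, I would show that peeling off the last letter $\nu_n$ via $\veps_{n,\nu_n}$, together with the scalar $r(\beta,\nu_n)$, realizes $t_{\Lambda,\beta}$ as the composite of $t_{\Lambda,\beta'}$ (for $\beta'=\beta-\alpha_{\nu_n}$, applied to the appropriate corner) with a perfect trace-like pairing on the "last box" layer, using the explicit bubble formulae of \cite[Definition A.1, Proposition B.3]{SVV} to control the image of $\veps_{n,\nu_n}$ on the homogeneous monomial basis of Proposition~\ref{prop:MBKLR}. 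One then checks $t_{\Lambda,\beta}(ab)=t_{\Lambda,\beta}(ba)$ and non-degeneracy level by level.

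The main obstacle I anticipate is bookkeeping the scalars: the factor $r_\nu=\prod_{k<l}r_{\nu_k,\nu_l}$ is exactly the correction needed so that the iterated $\veps$'s, which individually involve the polynomials $Q_{i,j}$ and their leading coefficients $c_{i,j,-a_{ij},0}$ through the bubble values, assemble into a \emph{symmetric} (cyclic-invariant) functional rather than merely a non-degenerate one; verifying that $r(\beta,\nu_n)\cdot r_{\nu'}=r_\nu$ for $\nu'=(\nu_1,\dots,\nu_{n-1})$ and that this matches the scalar discrepancy produced by the \cite{SVV} bubble formulae when commuting $\veps_{n,\nu_n}$ past the $\RSS_{\beta'}^\Lambda$-action is the delicate point. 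Once the scalars are reconciled, symmetry of $t_{\Lambda,\beta}$ follows because $\hat t_{\Lambda,\beta}$ is visibly a left $\RSS_\beta^\Lambda$-module map by construction (the $\veps$'s are), a right-module map by the same argument applied to the opposite algebra (using $Q_{i,j}(u,v)=Q_{j,i}(v,u)$ and the symmetry of the setup under reversing the order of letters), and bijective by the inductive non-degeneracy statement, so all three properties hold simultaneously and $(\RSS_\beta^\Lambda,t_{\Lambda,\beta})$ is a symmetric $\kb$-algebra.
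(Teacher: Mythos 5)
This proposition is not proved in the paper at all: it is imported verbatim from \cite{SVV} (it is essentially the main theorem of that paper), so the only ``paper proof'' to compare against is the citation itself. Your degree bookkeeping is fine --- the degrees of the maps $\veps_{k,\nu_k}$ do telescope to $-2(\Lam,\beta)+(\beta,\beta)=-d_{\Lambda,\beta}$, and $t_{\Lambda,\beta}$ vanishes on the off-diagonal pieces $e(\nu)\RSS_\beta^\Lambda e(\nu')$ by definition. But beyond that, what you have written is a plan rather than a proof, and the gaps sit exactly at the two properties that constitute the theorem: the trace identity $t_{\Lambda,\beta}(ab)=t_{\Lambda,\beta}(ba)$ and the bijectivity of $\hat t_{\Lambda,\beta}$. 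You yourself flag the reconciliation of the scalars $r_\nu$ with the bubble values as ``the delicate point'' and do not carry it out; that reconciliation, together with the perfectness of the one-box pairing in the inductive step, is the actual content of the SVV argument.

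Two specific claims in your last paragraph do not hold up as stated. First, $\hat t$ being a left-module map is automatic for \emph{any} linear functional $t$ (since $\hat t(ba)(a')=t(a'ba)=(b\cdot\hat t(a))(a')$), so it carries no information; the right-module property is literally equivalent to the trace identity, which is the thing to be proved. Second, the assertion that the right-module property follows ``by the same argument applied to the opposite algebra, using $Q_{i,j}(u,v)=Q_{j,i}(v,u)$ and symmetry under reversing the order of letters'' is not available: $t_{\Lambda,\beta}$ is defined by peeling strands off one end only, applying $\veps_{n,\nu_n}$ first, and nothing in the definition is manifestly invariant under reversing the letters; showing that the form is nevertheless cyclic (equivalently, independent of the peeling order up to the $r_\nu$ normalization) is precisely the hard part of \cite{SVV} and cannot be obtained by this symmetry remark. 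So either quote the SVV theorem outright, as the paper does, or commit to carrying out the induction in full: the decompositions of $e(\nu,j)\RSS_{\beta}^{\Lambda}e(\nu,j)$, the bubble computations controlling $\veps_{n,\nu_n}$, the nondegeneracy of the resulting pairing, and the trace identity, none of which are yet established in your sketch.
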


For each $\Lam,\Lam'\in P^+$. We denote by $\ov{\iota}_\beta^{\Lambda ,\Lambda'}: \RR_\beta^{\Lam}/[\R[\beta],\R[\beta]]\rightarrow
\RR_\beta^{\Lam+\Lam'}/[\RR_\beta^{\Lam+\Lam'},\RR_\beta^{\Lam+\Lam'}]$ the dual of the map $\ov{p}_\beta^{\Lambda , \Lambda'}: Z(\RR_\beta^{\Lam+\Lam'})\rightarrow Z(\R[\beta])$. Similarly, we use $$
\ov{\iota}_\beta^{\Lambda,i}: \RR_\beta^{\Lam}/[\R[\beta],\R[\beta]]\rightarrow
\RR_\beta^{\Lam+\Lam_i}/[\RR_\beta^{\Lam+\Lam_i},\RR_\beta^{\Lam+\Lam_i}]$$
to denote the dual of the map $\ov{p}_\beta^{\Lam,i}: Z(\RR_\beta^{\Lam+\Lam_i})\rightarrow
Z(\R[\beta])$.

\begin{thm}\label{thm:CC2}
  Let $\beta \in Q_n^+$. Then the Center Conjecture~\ref{conj:CC} holds for all $\Lambda \in P^+$ if and only if $\ov{\iota}_\beta^{\Lambda,i}$ is injective for any $\Lambda \in P^+$ and $i\in I$.
\end{thm}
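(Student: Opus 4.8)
The plan is to combine Theorem~\ref{thm:CC} with the abstract symmetric-algebra duality encapsulated in Lemma~\ref{lem:injdual}. By Theorem~\ref{thm:CC}, the Center Conjecture~\ref{conj:CC} holds for all $\Lambda\in P^+$ if and only if, for every $\Lambda\in P^+$ and every $i\in I$, the restricted map $\ov{p}_\beta^{\Lambda,i}\colon Z(\RR_\beta^{\Lam+\Lam_i})\to Z(\R[\beta])$ is surjective. So it suffices to prove the purely formal statement: for a surjection $p\colon A\twoheadrightarrow B$ of finite-dimensional symmetric $\kb$-algebras, the induced map $\ov p\colon Z(A)\to Z(B)$ on centers is surjective if and only if its dual $\ov{\iota}\colon B/[B,B]\to A/[A,A]$ is injective.

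First I would set up the duality carefully. Fix symmetrizing forms $t_A$ and $t_B$; these give perfect pairings $Z(A)\times A/[A,A]\to\kb$ and $Z(B)\times B/[B,B]\to\kb$ (the pairing in the displayed remark after the definition of symmetric algebras), identifying $Z(A)\cong (A/[A,A])^*$ and $Z(B)\cong (B/[B,B])^*$ as $\kb$-vector spaces. The key compatibility is that, under these identifications, the map $\ov{\iota}\colon B/[B,B]\to A/[A,A]$ is exactly the $\kb$-linear dual of $\ov p\colon Z(A)\to Z(B)$. This is essentially built into the construction of $\ov\iota$ in the excerpt (it is defined as the dual of $\ov p$ via the perfect pairings), but I would verify it explicitly: for $z\in Z(A)$ and $b\in B$, writing $b=p(a)$, one checks
\[
t_A\bigl(z\cdot\ov\iota(\bar b)\bigr)=t_A(z\cdot az_B)=t_A(za\cdot z_B)=t_B\bigl(p(za)\bigr)=t_B\bigl(\ov p(z)\cdot b\bigr),
\]
where $z_B:=\iota(1_B)$ and I have used Lemma~\ref{lem:injdual} (which gives $\iota(p(a))=a z_B$ and $t_A(x z_B)=t_B(p(x))$ for all $x\in A$) together with $za=az$. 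This identity says precisely that the bilinear pairing of $\ov p(z)$ with $b$ equals that of $z$ with $\ov\iota(\bar b)$, i.e. $\ov\iota$ is the transpose of $\ov p$.

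Once this transpose relationship is established, the conclusion is immediate linear algebra over $\kb$: a linear map between finite-dimensional vector spaces is surjective if and only if its transpose is injective. Applying this with $\ov p\colon Z(A)\to Z(B)$ gives that $\ov p$ is surjective iff $\ov\iota$ is injective. Specializing $A=\RR_\beta^{\Lam+\Lam_i}$, $B=\R[\beta]$, $p=p_\beta^{\Lam,i}$ (both symmetric $\kb$-algebras of finite rank by the Kang--Kashiwara freeness result and the preceding proposition on $t_{\Lambda,\beta}$, and finite-dimensional since $\kb=\kb_0$ is a field here), we get that $\ov p_\beta^{\Lam,i}$ is surjective iff $\ov\iota_\beta^{\Lam,i}$ is injective; quantifying over all $\Lambda\in P^+$ and $i\in I$ and invoking Theorem~\ref{thm:CC} finishes the proof.

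The only genuine content — the "hard part," though it is still routine — is pinning down that $\ov\iota$ really is the $\kb$-linear transpose of $\ov p$ and not merely an $(A,A)$-bimodule map related to it; this requires being scrupulous about which perfect pairing is used on each side and checking that the identification $Z(A)\cong(A/[A,A])^*$ intertwines the two maps. Everything else — finite-dimensionality, the transpose-surjectivity/injectivity equivalence, and the reduction via Theorem~\ref{thm:CC} — is standard.
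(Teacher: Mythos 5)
Your proposal is correct and follows essentially the same route as the paper: the paper's proof is simply the observation that Theorem~\ref{thm:CC} reduces everything to the surjectivity of $\ov{p}_\beta^{\Lambda,i}$, with the equivalence ``$\ov{p}$ surjective $\Leftrightarrow$ $\ov{\iota}$ injective'' taken as immediate because $\ov{\iota}_\beta^{\Lambda,i}$ is by construction the dual of $\ov{p}_\beta^{\Lambda,i}$ under the perfect pairings coming from the symmetrizing forms. Your explicit verification via Lemma~\ref{lem:injdual} that the concrete map induced by $\iota$ really is this transpose is a fuller writing-out of what the paper leaves implicit, not a different argument.
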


\begin{proof} This is a consequence of Theorem \ref{thm:CC}.
\end{proof}

\begin{prop}[\cite{SVV}]\label{prop:IndBubb}
  For each $\beta \in Q_n^+$ and $\nu\in I^\beta$, we have
  $$B_{\pm i, \lambda}(z)e(\nu) = z^{\mp t_i}a_i^{\Lambda}(z^{-1})^{\mp 1}\prod_{k=1}^{n}q_{i\nu_k}(z, x_k)^{\mp 1}e(\nu).$$
  where $B_{\pm i, \lambda}(z) = \sum_{k\in \ZM}B_{\pm i, \lambda}^k z^k\in\R[\beta][[z]]$ and $t_i = \la \alpha_i^{\vee} , \Lambda\ra$.
\end{prop}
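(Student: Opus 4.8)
The plan is to deduce this directly from the appendix of \cite{SVV}: after matching notation, the asserted formula is \cite[Proposition B.3]{SVV}, so the real content of the argument is fixing the dictionary between the two normalizations; I would then also recall the mechanism behind the formula. First I would recall how the bubble elements $B_{\pm i,\lambda}^k\in\R[\beta]$ are defined --- as the elements built from the adjunction/trace maps $\veps'_{i,\lambda}$ (the maps $\veps_{k,\nu_k}$ recalled on the preceding page), equivalently as the images of the clockwise, resp.\ counterclockwise, dotted bubbles under the relevant $2$-representation, following \cite[Definition A.1]{SVV}. The proposition then amounts to evaluating the series $B_{\pm i,\lambda}(z)$ explicitly, which can be carried out $e(\nu)$-block by $e(\nu)$-block.

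To do the evaluation I would proceed in three steps. (i) Apply the bubble-inversion (``infinite Grassmannian'') relation $B_{+i,\lambda}(z)\,B_{-i,\lambda}(z)=1$, in the appropriate normalization; since the two claimed formulas are visibly reciprocal, this reduces everything to the single case $B_{-i,\lambda}(z)e(\nu)$. (ii) Induct on $n=|\beta|$: the base case $\beta=0$ (where $\R[\beta]=\kb$) is the identity $B_{-i,\lambda}(z)=z^{t_i}a_i^{\Lambda}(z^{-1})^{-1}$ with $t_i=\la\alpha_i^{\vee},\Lambda\ra$, which encodes the cyclotomic polynomial $a_i^{\Lambda}$ and supplies the factor $z^{\mp t_i}a_i^{\Lambda}(z^{-1})^{\mp 1}$. (iii) For the inductive step I would peel off the last strand, coloured $j=\nu_n$, via the map $\veps_{n,\nu_n}$: commuting a dotted $i$-bubble past a $j$-strand is governed by the relations of Definition \ref{KLR} --- in particular $\tau_k^2e(\nu)=Q_{\nu_k,\nu_{k+1}}(x_k,x_{k+1})e(\nu)$ together with the resulting curl relation --- and this introduces exactly the factor $q_{ij}(z,x_n)^{-1}$ while reducing $\beta$ to $\beta-\alpha_{\nu_n}$ and $\lambda$ accordingly. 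Iterating over $k=n,n-1,\dots,1$ builds up the product $\prod_{k=1}^{n}q_{i\nu_k}(z,x_k)^{\mp 1}$, which together with (ii) yields the stated identity.

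The hard part will be bookkeeping rather than anything conceptual: I would need to match the polynomials $q_{ij}$ in the statement with the $Q_{ij}$ of Definition \ref{KLR}, pin down the normalizing scalars $c_{i,j,-a_{ij},0}$ (the same units that enter the constants $r_\nu$ above), and keep careful track of the $\ZM$-gradings and of the signs in the trace maps $\veps'_{i,\lambda}$ exactly as in \cite[Definition A.1]{SVV}, since a single sign or degree slip propagates through the whole induction. A useful sanity check along the way is to verify that the right-hand side of the claimed formula genuinely lies in $\R[\beta][[z]]$ for both signs, which already constrains the correct normalization of the $q_{ij}$. Once that dictionary between the KLR presentation used here and the normalization of \cite{SVV} is fixed, steps (i)--(iii) are routine and reproduce \cite[Proposition B.3]{SVV}.
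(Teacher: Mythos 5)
The paper offers no proof of this proposition at all---it is imported verbatim (up to notation) from \cite[Definition A.1, Proposition B.3]{SVV}---so your plan of fixing the dictionary between the two normalizations and citing that result is exactly what the paper does, and your sketch of the mechanism behind SVV's formula (bubble inversion plus peeling off strands through the maps $\veps_{k,\nu_k}$) is consistent with how it is established there. One small slip to fix in your outline: for $\beta=0$ the stated formula reads $B_{-i,\lambda}(z)=z^{t_i}a_i^{\Lambda}(z^{-1})$ with no inverse (the inverse of $a_i^{\Lambda}(z^{-1})$ belongs to the $+$ bubble), so your base-case identity as written mixes the two signs.
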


The following lemma plays a crucial in the proof of the main result in this section.

\begin{lem}\label{keyDifflem} Let $\beta \in Q_n^+$, $\nu \in I^\beta$, $i,j \in I$. Set $\widetilde{\Lam}:=\Lam+\Lam_i, \lam=\Lam-\beta, \widetilde{\lam}:=\widetilde{\Lam}-\beta$. Let $a \in e(\nu , j)\RSS_{\beta +\alpha_j}^{\widetilde{\Lambda}}e(\nu,j)$. Then \begin{enumerate}
\item if $i\neq j$, then $p_\beta^{\Lam,i}(\hat\varepsilon_{j,\widetilde{\lam}}(a))=\hat\varepsilon_{j,\lam}(p_{\beta+\alpha_j}^{\Lam,i}(a))$;
\item if $i=j$, then $p_\beta^{\Lam,i}(\hat\varepsilon_{j,\widetilde{\lam}}(ax_{n+1}))=\hat\varepsilon_{j,\lam}(p_{\beta+\alpha_j}^{\Lam,i}(a))$.
\end{enumerate}
\end{lem}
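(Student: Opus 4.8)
The plan is to unwind the definition of $\hat\varepsilon_{j,?}$ in terms of the bubble maps from \cite{SVV} and to use the compatibility of bubbles with the cyclotomic projection $p_\beta^{\Lam,i}$. Recall from \cite[A.3]{SVV} that the map $\hat\varepsilon_{j,\mu}$ attached to a weight $\mu$ is extracted, via the adjunction data between induction and restriction functors, from the negative bubble $B_{-j,\mu}^{\,m}$ at the ``top degree'' slot — concretely, $\hat\varepsilon_{j,\mu}(a)$ is obtained by capping $a\in e(\nu,j)\RSS_{\beta+\alpha_j}^{?}e(\nu,j)$ with the appropriate coefficient of $B_{-j,\mu}(z)$ (for $i=j$ one inserts an extra $x_{n+1}$ first, which shifts the relevant bubble degree by one, whence the two cases in the statement). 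Thus the whole lemma reduces to comparing, under $p_\beta^{\Lam,i}\colon \RSS_\beta^{\Lam+\Lam_i}\twoheadrightarrow\RSS_\beta^{\Lam}$, the bubble power series $B_{-j,\widetilde\lam}(z)$ with $B_{-j,\lam}(z)$, and this is exactly what Proposition \ref{prop:IndBubb} controls.

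First I would apply Proposition \ref{prop:IndBubb} on both sides. On $\RSS_{\beta}^{\widetilde\Lam}$ (weight $\widetilde\lam=\widetilde\Lam-\beta$, and $t_j=\la\alpha_j^\vee,\widetilde\Lam\ra$) it gives
\[
B_{-j,\widetilde\lam}(z)e(\nu)=z^{\,t_j}\,a_j^{\widetilde\Lam}(z^{-1})\prod_{k=1}^{n}q_{j\nu_k}(z,x_k)\,e(\nu),
\]
while on $\RSS_\beta^{\Lam}$ (weight $\lam$, $t_j'=\la\alpha_j^\vee,\Lam\ra$) it gives the same formula with $\widetilde\Lam$ replaced by $\Lam$. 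Now distinguish the two cases. If $i\neq j$ then $\la\alpha_j^\vee,\Lam_i\ra=0$, so $t_j=t_j'$ and $a_j^{\widetilde\Lam}(u)=a_j^{\Lam}(u)$ (both are $u^{t_j}$ in the setting of Section~\ref{sec:DC}); the $q_{j\nu_k}$ factors are unchanged since they only depend on $\beta$ and $\nu$. Hence $p_\beta^{\Lam,i}$ carries $B_{-j,\widetilde\lam}^{\,m}e(\nu)$ to $B_{-j,\lam}^{\,m}e(\nu)$ coefficient-by-coefficient, and since $\hat\varepsilon_{j,\widetilde\lam}$ and $\hat\varepsilon_{j,\lam}$ are built from these coefficients by the same universal recipe (which is compatible with any algebra map commuting with the $\tau,x,e$ generators, in particular with $p_{\beta+\alpha_j}^{\Lam,i}$ and $p_\beta^{\Lam,i}$), we get $p_\beta^{\Lam,i}(\hat\varepsilon_{j,\widetilde\lam}(a))=\hat\varepsilon_{j,\lam}(p_{\beta+\alpha_j}^{\Lam,i}(a))$. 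If $i=j$, then $t_j=t_j'+1$ and $a_j^{\widetilde\Lam}(u)=u\cdot a_j^{\Lam}(u)$, so $z^{\,t_j}a_j^{\widetilde\Lam}(z^{-1})=z^{\,t_j'}a_j^{\Lam}(z^{-1})$; the two power series $B_{-j,\widetilde\lam}(z)e(\nu)$ and $B_{-j,\lam}(z)e(\nu)$ therefore agree (after applying $p_\beta^{\Lam,i}$), but the degree-shift means the slot extracting $\hat\varepsilon_{j,\widetilde\lam}(a)$ on $\beta+\alpha_j$ corresponds, after multiplying $a$ by $x_{n+1}$, to the slot extracting $\hat\varepsilon_{j,\lam}$ on $\beta$ — which is precisely the asserted identity $p_\beta^{\Lam,i}(\hat\varepsilon_{j,\widetilde\lam}(ax_{n+1}))=\hat\varepsilon_{j,\lam}(p_{\beta+\alpha_j}^{\Lam,i}(a))$.

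The step I expect to be the main obstacle is making the phrase ``built from the bubble coefficients by the same universal recipe, compatible with $p$'' genuinely precise: one must track through the definition of $\hat\varepsilon_{\nu_k}$ in \cite[A.3, Theorem 3.8]{SVV} — which involves the counit of an adjunction realised by explicit dotted cup/cap diagrams — and verify that each diagrammatic generator used there is hit by $p_{\beta+\alpha_j}^{\Lam,i}$ and $p_\beta^{\Lam,i}$ compatibly, so that applying $p$ after $\hat\varepsilon$ equals applying $\hat\varepsilon$ after $p$ up to exactly the bubble discrepancy computed above. The careful bookkeeping of which bubble degree $m$ is relevant (and why inserting $x_{n+1}$ in the $i=j$ case exactly absorbs the shift $t_j=t_j'+1$) is the delicate point; everything else is a direct substitution into Proposition \ref{prop:IndBubb}. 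I would therefore organise the write-up as: (1) recall the precise formula for $\hat\varepsilon_{j,\mu}$ as a bubble-capping, (2) record the behaviour of $t_j$, $a_j^{?}$, $q_{j\nu_k}$ under $\Lam\mapsto\Lam+\Lam_i$, (3) split into $i\neq j$ and $i=j$ and conclude.
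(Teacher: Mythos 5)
There is a genuine gap. Your proposal treats $\hat\varepsilon_{j,\mu}$ as if it were ``capping with a coefficient of the bubble series'' and then asserts that it is ``built from these coefficients by a universal recipe compatible with any algebra map commuting with the $\tau,x,e$ generators.'' That is not the definition: $\hat\varepsilon_{j,\mu}$ is defined in \cite[Theorem 3.8, A.3]{SVV} through the \emph{unique} decomposition of an element $a\in e(\nu,j)\RSS_{\beta+\alpha_j}^{?}e(\nu,j)$ into an induced part $\mu_{\tau_n}(\pi(a))$ plus polynomial terms $\sum_k p_k(a)x_{n+1}^k$ (when $\la\alpha_j^\vee,\mu\ra>0$), respectively via the element $\widetilde a$ with prescribed vanishing of $\mu_{x_n^k}(\widetilde a)$ (when $\la\alpha_j^\vee,\mu\ra\le 0$). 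Its compatibility with $p_\beta^{\Lam,i}$ is exactly what the lemma asserts and is not a formal consequence of compatibility with generators: if such ``universal'' commutation held, one would get $p_\beta^{\Lam,i}(\hat\varepsilon_{j,\widetilde\lam}(a))=\hat\varepsilon_{j,\lam}(p_{\beta+\alpha_j}^{\Lam,i}(a))$ also for $i=j$, contradicting the very need to insert $x_{n+1}$ in part (2). So the step you yourself flag as ``the main obstacle'' is not a bookkeeping detail; it is the entire content of the proof, and your bubble comparison alone cannot produce it.

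Concretely, the paper proves the lemma by working with those decompositions directly. For $i\neq j$ it splits into $\lam_j>0$ and $\lam_j\le 0$ and uses the uniqueness in \cite[(6),(8)]{SVV} to show that applying $p_{\beta+\alpha_j}^{\Lam,i}$ to the decomposition of $a$ yields the decomposition of $p_{\beta+\alpha_j}^{\Lam,i}(a)$ (this is legitimate only because $\lam_j=\widetilde\lam_j$ there). For $i=j$ the shapes of the decompositions over $\widetilde\Lam$ and $\Lam$ differ by one term ($\widetilde\lam_i=\lam_i+1$), so one must argue case by case ($\lam_i>0$, $\lam_i=0$, $\lam_i=-1$, $\lam_i<-1$), use the relation $\tau_nx_{n+1}e(\beta-\alpha_i,i^2)=x_n\tau_ne(\beta-\alpha_i,i^2)+e(\beta-\alpha_i,i^2)$ to recompute the decomposition of $ax_{n+1}$, and identify the leftover correction terms (involving $p_{\lam_i}(x_{n+1}^{\lam_i+1}e(\nu,i))$, the elements $\widetilde\pi_\ell$, or $\pi(e(\beta,i))$) with the degree-one bubble coefficients $B^1_{+i,\widetilde\lam}$ and $B^1_{+i,\lam}$; only at that final step does Proposition~\ref{prop:IndBubb} (together with $p_\beta^{\Lam,i}(B^1_{+i,\widetilde\lam})=B^1_{+i,\lam}$) enter. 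Your computation of how $t_j$, $a_j^{?}$ and the $q_{j\nu_k}$ change under $\Lam\mapsto\Lam+\Lam_i$ reproduces that last, easy step, but the reduction of the lemma to it --- in particular why multiplying by $x_{n+1}$ absorbs the shift, with the precise correction terms --- is missing, so as it stands the proposal does not constitute a proof.
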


\begin{proof} 1) Set $\lam_j:=\langle h_j,\lam\rangle,\, \widetilde{\lambda}_j := \langle h_j,\widetilde{\lambda}\rangle$. If $\lam_j>0$, then by \cite[(6)]{SVV} we have a decomposition: \begin{equation}\label{decomp1}
a = e(\nu,j)\mu_{\tau_n}(\pi(a))e(\nu,j) + \sum_{k=0}^{\widetilde{\lambda}_j-1}e(\nu,j)p_k(a)x_{n+1}^ke(\nu,j) ,
\end{equation}
where $\pi(a)\in \RR_\beta^{\widetilde{\Lam}}e(\beta-\alpha_j,j)\otimes_{\RR_{\beta-\alpha_j}^{\widetilde{\Lam}}}e(\beta-\alpha_j,j)\RR_\beta^{\widetilde{\Lam}}$, $p_k(a)\in \R[\beta]$, $k=0,1,\cdots,\widetilde{\lambda}_j-1$, are unique elements such that (\ref{decomp1}) holds.

Note that the assumption $i\neq j$ implies that $\lam_j=\widetilde{\lam}_j$. We can  apply $p_{\beta+\alpha_j}^{\Lam,i}$ to (\ref{decomp1}) to get the decomposition (\cite[(6)]{SVV}) of $p_{\beta+\alpha_j}^{\Lam,i}(a)$ in $\R[\beta]$. By the uniqueness related to the decomposition we can deduce that $(p_{\beta}^{\Lam,i}\otimes p_{\beta}^{\Lam,i})(\pi(a))=\pi(p_{\beta+\alpha_j}^{\Lam,i}(a))$ and $p_{\beta}^{\Lam,i}(p_k(a))=p_k(p_{\beta+\alpha_j}^{\Lam,i}(a))$, $k=0,1,\cdots,\widetilde{\lambda}_j-1=\lam_j-1$.
In particular, we have $$
p_\beta^{\Lam,i}(\hat\varepsilon_{j,\widetilde{\lam}}(a))=p_\beta^{\Lam,i}(p_{\lambda_j-1}(a)) = p_{\lambda_j-1}(p_{\beta+\alpha_j}^{\Lam,i}(a))=\hat\varepsilon_{j,\lam}(p_{\beta+\alpha_j}^{\Lam,i}(a)).
$$

If $\lambda_j\leqslant 0$, then we have $\hat\varepsilon_{j,\widetilde{\lam}}(a)=\mu_{x_n^{-\widetilde{\lam}_j}}(\widetilde{a})=\mu_{x_n^{-\lam_j}}(\widetilde{a})$, where $\widetilde{a}\in \RR_\beta^{\widetilde{\Lam}}e(\beta-\alpha_j,j)\otimes_{\RR_{\beta-\alpha_j}^{\widetilde{\Lam}}}e(\beta-\alpha_j,j)\RR_\beta^{\widetilde{\Lam}}$ is the unique element such that $$
e(\nu,j)\mu_{\tau_n}(\widetilde{a})e(\nu,j)=a,\quad\, \mu_{x_n^k}(\widetilde{a})=0,\,\,\,k=0,1,\cdots,-\widetilde{\lambda}_j-1=-\lam_j-1.
$$
By the uniqueness we can deduce that $$
(p_{\beta}^{\Lam,i}\otimes p_{\beta}^{\Lam,i})(\widetilde{a})=\widetilde{p_{\beta+\alpha_j}^{\Lam,i}(a)} .
$$
It follows that $$
p_\beta^{\Lam,i}(\hat\varepsilon_{j,\widetilde{\lam}}(a))=p_\beta^{\Lam,i}(\mu_{x_n^{-\lam_j}}(\widetilde{a})) = \mu_{x_n^{-\lam_j}}(\widetilde{p_{\beta+\alpha_j}^{\Lam,i}(a)})=\hat\varepsilon_{j,\lam}(p_{\beta+\alpha_j}^{\Lam,i}(a)).
$$
This proves 1).

2) Suppose $i=j$. Then $\widetilde{\lam}_i=\la \alpha_i^\vee,\widetilde{\Lambda} \ra= \la \alpha_i^\vee,\Lambda \ra+1=\lam_i+1$.
   Our discussion will be divided into four cases:


\smallskip\noindent
{\it Case 1.}
$\lambda_i > 0$. By \cite[(6)]{SVV}, we have
          \begin{equation}\label{eq:case1}
            a= e(\nu,i)\mu_{\tau_{n}}\big(\pi(a)\big)e(\nu,i) + \sum_{k=0}^{\lambda_i} e(\nu,i)p_{k}(a)x_{n+1}^k e(\nu,i),
          \end{equation}
          where $\pi(a)\in \RSS_\beta^{\widetilde{\Lambda}}e(\beta -\alpha_i ,i)\otimes_{\RSS_{\beta-\alpha_i}^{\widetilde{\Lambda}}}e(\beta -\alpha_i ,i)\RSS_\beta^{\widetilde{\Lambda}}$, $p_k(a) \in \RSS_\beta^{\widetilde{\Lambda}}$, $k=0,1,\cdots,\lam_i$. By the relation $\tau_{n}x_{n+1} e(\beta - \alpha_i,i^2) = x_{n}\tau_{n}e(\beta - \alpha_i,i^2) + e(\beta - \alpha_i,i^2)$, we can deduce
          $$\hat\varepsilon_{j,\widetilde{\lam}}(ax_{n+1}) =\hat\varepsilon_{i,\widetilde{\lam}}(ax_{n+1}) = {p}_{\lambda_i}(ax_{n+1}) = {p}_{\lambda_i - 1}(a) + {p}_{\lambda_i}(a){p}_{\lambda_i}(x_{n+1}^{\lambda_i+1}e(\nu,i)).$$
          Applying $p_{\beta+\alpha_i}^{\Lam,i}$ to \eqref{eq:case1} and using the $\RSS_\beta^{\Lambda}$-bilinearity of $\veps_{i,\lam}$ (\cite[(6)]{SVV}), we can deduce that
          $$p_{\beta+\alpha_i}^{\Lam,i}(a) = \mu_{\tau_{n}}\bigl((p_{\beta}^{\Lam,i}\otimes p_{\beta}^{\Lam,i})\pi(a)\bigr) + \sum_{k=0}^{\lambda_i-1}p_{\beta}^{\Lam,i}(p_k(a))x_{n+1}^k e(\nu,i) + p_{\beta}^{\Lam,i}(p_{\lambda_i}(a))x_{n+1}^{\lambda_i}e(\nu,i),$$
          and therefore
          $$\hat\varepsilon_{j,\lam}(p_{\beta+\alpha_i}^{\Lam,i}(a)) = p_{\beta}^{\Lam,i}(p_{\lambda_i - 1}(a)) + p_{\beta}^{\Lam,i}(p_{\lambda_i}(a))p_{\lambda_i-1}(x_{n+1}^{\lambda_i}e(\nu,i)).$$
          So Part 2) of the lemma will follow from the following equality:
          \begin{equation}\label{eq:redcase1}
            p_{\beta}^{\Lam,i}\bigl(\hat\varepsilon_{i,\widetilde{\lam}}(x_{n+1}^{\lambda_i+1}e(\nu,i))\bigr) = \hat\varepsilon_{i,\lam}(x_{n+1}^{\lambda_i}e(\nu,i)),
                    \end{equation}
where the element $x_{n+1}^{\lam_i+1}e(\nu,i)$ on the left handside of the above equality is defined in $\RR_{\beta+\alpha_i}^{\widetilde{\Lam}}$, while the
element $x_{n}^{\lam_i}e(\nu,i)$ on the right handside of
the above equality is defined in $\R[\beta]$. The above equality follows from Proposition~\ref{prop:IndBubb}, the fact that $p_\beta^{\Lam,i}(B_{+i, \widetilde{\lambda}}^1) = B_{+i, \lambda}^1$ and the
equalities (\cite[Definition A.1]{SVV}) $$
B_{+i,\widetilde{\lam}}^1e(\nu)=\veps_{i,\widetilde{\lam}}\bigl(x_{n+1}^{\widetilde{\lambda}_i}e(\nu,i)\bigr)=\veps_{i,\widetilde{\lam}}(x_{n+1}^{\lambda_i+1}e(\nu,i)),\,\,
B_{+i,\lam}^1e(\nu)=\veps_{i,\lam}\bigl(x_{n+1}^{\lambda_i}e(\nu,i)\bigr),
$$ in this case.

\smallskip\noindent
{\it Case 2.}
$\lambda_i = 0$. By \cite[(6)]{SVV}, we have
          \begin{equation}\label{eq:case2}
            a= e(\nu,i)\mu_{\tau_{n}}\big(\pi(a)\big)e(\nu,i) +e(\nu,i)p_0(a)e(\nu,i),
          \end{equation}
where $\pi(a)\in \RSS_\beta^{\widetilde{\Lambda}}e(\beta -\alpha_i ,i)\otimes_{\RSS_{\beta-\alpha_i}^{\widetilde{\Lambda}}}e(\beta -\alpha_i ,i)\RSS_\beta^{\widetilde{\Lambda}}$, $p_0(a) \in \RSS_\beta^{\widetilde{\Lambda}}$.
          Again, since $$\tau_{n}x_{n+1} e(\beta - \alpha_i,i^2) = x_{n}\tau_{n}e(\beta - \alpha_i,i^2) + e(\beta - \alpha_i,i^2),$$ we can deduce that (because $\lam_i=0$)
          $$\hat\varepsilon_{i,\widetilde{\lam}}(ax_{n+1}) = \mu_1\big(\pi(a)\big) + p_{0}(a)p_{0}(x_{n+1}e(\nu,i)).$$
          Applying $p_{\beta+\alpha_i}^{\Lam,i}$ to \eqref{eq:case2} and using the $R^{\Lambda}(\beta)$-bilinearity of the map $z\mapsto \pi(z)$ in \cite[(6)]{SVV}, we get in $\R[\beta+\alpha_i]$ that
          $$\begin{aligned}
          p_{\beta+\alpha_i}^{\Lam,i}(a)& = \mu_{\tau_{n}}\big((p_{\beta}^{\Lam,i}\otimes p_{\beta}^{\Lam,i})(\pi(a)) \big) + p_{\beta}^{\Lam,i}(p_{0}(a))e(\nu,i)\\
          &=\mu_{\tau_{n}}\big((p_{\beta}^{\Lam,i}\otimes p_{\beta}^{\Lam,i})(\pi(a)) \big) + p_{\beta}^{\Lam,i}(p_{0}(a))\mu_{\tau_{n}}\big( \pi(e(\nu,i)) \big),\end{aligned}
          $$
          and therefore by applying \cite[Theorem 3.8]{SVV} we get that
          $$\hat\varepsilon_{i,\lam}(p_{\beta+\alpha_i}^{\Lam,i}(a)) = p_{\beta}^{\Lam,i}(\mu_1\big( \pi(a) \big)) + p_{\beta}^{\Lam,i}(p_{0}(a))\mu_{1}\big( \pi(e(\nu,i)) \big).$$
          So Part 2) of the lemma will follow from the following equality:
          \begin{equation}\label{eq:redcase2}
           p_{\beta}^{\Lam,i}(\hat\varepsilon_{i,\widetilde{\lam}}(x_{n+1}e(\nu, i))) = \hat\varepsilon_{i,\lam}(e(\nu, i));
          \end{equation}
where the element $x_{n+1}e(\nu,i)$ on the left handside of the above equality is defined in $\RR_{\beta+\alpha_i}^{\widetilde{\Lam}}$, while the element
$x_{n}e(\nu,i)$ on the right handside of the above equality is defined in $\R[\beta]$. The above equality follows from Proposition~\ref{prop:IndBubb}, the fact
that $p_\beta^{\Lam,i}(B_{+i, \widetilde{\lambda}}^1) = B_{+i, \lambda}^1$ and the equalities (\cite[Definition A.1]{SVV}) $$
B_{+i,\widetilde{\lam}}^1e(\nu)=\veps_{i,\widetilde{\lam}}\bigl(x_{n+1}e(\nu,i)\bigr),\,\, B_{+i,\lam}^1e(\nu)=\veps_{i,\lam}\bigl(e(\nu,i)\bigr)
$$ in this case.

\smallskip\noindent
{\it Case 3.} $\lambda_i < -1$. By \cite[(8)]{SVV}, we have
          \begin{equation}\label{eq:case3}
            a = e(\nu,i)\mu_{\tau_{n}}\big( \widetilde{a} \big)e(\nu,i),
          \end{equation}
where $\widetilde{a}\in \RR_\beta^{\widetilde{\Lam}}e(\beta-\alpha_i,i)\otimes_{\RR_{\beta-\alpha_i}^{\widetilde{\Lam}}}e(\beta-\alpha_i,i)\RR_\beta^{\widetilde{\Lam}}$ is the unique element such that (\ref{eq:case3}) holds and $$
\mu_{x_n^k}(\widetilde{a})=0,\,\,\,\,\,k=0,1,\cdots,-\widetilde{\lambda}_i-1=-\lam_i-2.
$$
          By the relation $\tau_{n}x_{n+1} e(\beta - \alpha_i,i^2) = x_{n}\tau_{n}e(\beta - \alpha_i,i^2) + e(\beta - \alpha_i,i^2)$, we can get that
          \begin{equation*}
                        ax_{n+1}  = \mu_{x_{n}\tau_{n}}(\widetilde{a}) + \mu_1(\widetilde{a})= \mu_{x_{n}\tau_{n}}(\widetilde{a}).
                                    \end{equation*}
where $\mu_1(\widetilde{a})=0$ follows from \cite[(8)]{SVV} and the assumption $\lam_i<-1$.

Recall the definition of the elements $\{\widetilde{\pi}_\ell|0\leq \ell\leq -\widetilde{\lambda}_i-1=-\lam_i-2\}$ in \cite[(8)]{SVV}.
In particular, $\mu_{\tau_{n}}(\widetilde{\pi}_{-\lambda_i-2})=0$. Therefore, \begin{equation}\label{axnplus}
 ax_{n+1}=\mu_{x_{n}\tau_{n}}(\widetilde{a})= \mu_{\tau_{n}}\Bigl( b - \hat\varepsilon_{i,\widetilde{\lam}}(a)\widetilde{\pi}_{-\lambda_i-2} \Bigr),
\end{equation}
where $b:= \sum_z z_1x_{n}\otimes z_2$ and $\widetilde{a} = \sum_z z_1\otimes z_2$. Note that the element $b - \hat\varepsilon_{i,\widetilde{\lam}}(a)\widetilde{\pi}_{-\lambda_i-2}$ satisfies that $$
\mu_{x_n^k}\bigl(b - \hat\varepsilon_{i,\widetilde{\lam}}(a)\widetilde{\pi}_{-\lambda_i-2}\bigr)=0,\,\,\,k=0,1,\cdots,-\widetilde{\lam}_i-1=-\lam_i-2.
$$
So by the uniqueness in the definition of $\widetilde{ax_{n+1}}$ (\cite[(8)]{SVV}) we can deduce that $\widetilde{ax_{n+1}}=b - \hat\varepsilon_{i,\widetilde{\lam}}(a)\widetilde{\pi}_{-\lambda_i-2}$.
Therefore, \begin{equation}\label{left}\hat\varepsilon_{i,\widetilde{\lam}}(ax_{n+1}) = \mu_{x_{n}^{-\lambda_i-1}}\big( \widetilde{ax_{n+1}} \big) = \mu_{x_{n}^{-\lambda_i}}(\widetilde{a}) - \hat\varepsilon_{i,\widetilde{\lam}}(a)\mu_{x_{n}^{-\lambda_i-1}}\big(\widetilde{\pi}_{-\lambda_i-2}\big).\end{equation}
          By definition, we have that \begin{equation}\label{muqq}
          \mu_{x_n^{-\lam_i-1}}\bigl((p_{\beta}^{\Lam,i}\otimes p_{\beta}^{\Lam,i})\widetilde{a}\bigr)=p_{\beta}^{\Lam,i}\bigl(\veps_{i,\widetilde{\lam}}(a)\bigr) .
          \end{equation}
We claim that \begin{equation}\label{wtildepa3}\widetilde{p_{\beta+\alpha_i}^{\Lam,i}(a)} =(p_{\beta}^{\Lam,i}\otimes p_{\beta}^{\Lam,i})(\widetilde{a}) - p_{\beta}^{\Lam,i}(\hat\varepsilon_{i,\widetilde{\lam}}(a))\widetilde{\pi}_{-\lambda_i-1}.\end{equation}
In fact, since $$\begin{aligned}
&\quad\,\mu_{\tau_n}\bigl((p_{\beta}^{\Lam,i}\otimes p_{\beta}^{\Lam,i})(\widetilde{a})\bigr)-\mu_{\tau_n}\bigl(p_{\beta}^{\Lam,i}(\hat\varepsilon_{i,\widetilde{\lam}}(a))\widetilde{\pi}_{-\lambda_i-1}\bigr)\\
&=p_{\beta+\alpha_i}^{\Lam,i}(a)-p_{\beta}^{\Lam,i}(\hat\varepsilon_{i,\widetilde{\lam}}(a))\mu_{\tau_n}\bigl(\widetilde{\pi}_{-\lambda_i-1}\bigr)\\
&=p_{\beta+\alpha_i}^{\Lam,i}(a)-0=p_{\beta+\alpha_i}^{\Lam,i}(a),\end{aligned}
$$
and $$\begin{aligned}
&\quad\,\mu_{x_n^{-\lam_i-1}}\bigl((p_{\beta}^{\Lam,i}\otimes p_{\beta}^{\Lam,i})(\widetilde{a})\bigr)-\mu_{x_n^{-\lam_i-1}}\bigl(p_{\beta}^{\Lam,i}(\hat\varepsilon_{i,\widetilde{\lam}}(a))\widetilde{\pi}_{-\lambda_i-1}\bigr)\\
&=p_{\beta}^{\Lam,i}(\hat\varepsilon_{i,\widetilde{\lam}}(a))-p_{\beta}^{\Lam,i}(\hat\varepsilon_{i,\widetilde{\lam}}(a))=0,
\end{aligned}
$$
and for $k=0,1,\cdots,-\lam_i-2$, $$
\mu_{x_n^{k}}\bigl((p_{\beta}^{\Lam,i}\otimes p_{\beta}^{\Lam,i})(\widetilde{a})\bigr)-\mu_{x_n^{k}}\bigl(p_{\beta}^{\Lam,i}(\hat\varepsilon_{i,\widetilde{\lam}}(a))\widetilde{\pi}_{-\lambda_i-1}\bigr)=0-0=0,
$$
our claim follows from the uniqueness (\cite[(8)]{SVV}) in the definition of $\widetilde{p_{\beta+\alpha_i}^{\Lam,i}(a)}$.

As a result of (\ref{wtildepa3}), we see that $$
\hat\varepsilon_{i,\lam}(p_{\beta+\alpha_i}^{\Lam,i}(a)) = \mu_{x_{n}}\big( \widetilde{a} \big) - p_{\beta}^{\Lam,i} (\hat\varepsilon_{n+1,i}(a))\mu_{x_{n}^{-\lambda_i}}\big(\widetilde{\pi}_{-\lambda_i-1}\big). $$

Now comparing (\ref{left}) with the above equality, to prove the part 2) of the lemma, it suffices to show that
          \begin{equation}\label{eq:redcase3}
            p_{\beta}^{\Lam,i}\Bigl( \mu_{x_{n}^{-\lambda_i-1}}\big(\widetilde{\pi}_{-\lambda_i-2}\big) \Bigr) = \mu_{x_{n}^{-\lambda_i}}\big(\widetilde{\pi}_{-\lambda_i-1}\big).
          \end{equation}
where the element $\widetilde{\pi}_{-\lambda_i-2}$ on the left handside of the above equality is defined in $\RR_{\beta+\alpha_i}^{\widetilde{\Lam}}$, while the element
$\widetilde{\pi}_{-\lambda_i-1}$ on the right handside of the above equality is defined in $\R[\beta]$. The above equality follows from Proposition~\ref{prop:IndBubb}, the fact that $p_\beta^{\Lam,i}(B_{+i, \widetilde{\lambda}}^1) = B_{+i, \lambda}^1$ and the equalities (\cite[Definition A.1]{SVV} $$
B_{+i,\widetilde{\lam}}^1=-\mu_{x_{n}^{-\lambda_i-1}}\big(\widetilde{\pi}_{-\lambda_i-2}\big),\,\, B_{+i,\lam}^1=-\mu_{x_{n}^{-\lambda_i}}\big(\widetilde{\pi}_{-\lambda_i-1})
$$ in this case.

%

\smallskip\noindent
{\it Case 4.} $\lambda_i = -1$. By \cite[(8)]{SVV}, we have
          \begin{equation}\label{eq:case4}
            a = e(\nu,i)\mu_{\tau_{n}}\big( \widetilde{a} \big)e(\nu,i),
          \end{equation}
where $\widetilde{a}\in \RR_\beta^{\widetilde{\Lam}}e(\beta-\alpha_i,i)\otimes_{\RR_{\beta-\alpha_i}^{\widetilde{\Lam}}}e(\beta-\alpha_i,i)\RR_\beta^{\widetilde{\Lam}}$ is the unique element such that (\ref{eq:case3}) holds.
By the relation $\tau_{n}x_{n+1} e(\beta - \alpha_i,i^2) = x_{n}\tau_{n}e(\beta - \alpha_i,i^2) + e(\beta - \alpha_i,i^2)$, we can get that
          \begin{equation*}
                        ax_{n+1}  = \mu_{x_{n}\tau_{n}}(\widetilde{a}) + \mu_1(\widetilde{a}).
                                    \end{equation*}

Since $\lam_i=-1$ implies that $\widetilde{\lam}_i=0$, it follows that $\veps_{i,\widetilde{\lam}}(a)=\mu_1(\widetilde{a})$ (\cite[Theorem 3.8]{SVV}). Therefore, \begin{equation}\label{axnplus4}
 ax_{n+1}=\mu_{x_{n}\tau_{n}}(\widetilde{a})= \mu_{\tau_{n}}\Bigl( b + \hat\varepsilon_{i,\widetilde{\lam}}(a)\pi(e(\beta,i))\Bigr),
\end{equation}
where $b:= \sum_z z_1x_{n}\otimes z_2$ and $\widetilde{a} = \sum_z z_1\otimes z_2$, and we have used the equalities: for $e(\nu,i)\in\RR_{\beta+\alpha_i}^{\widetilde{\Lam}}$,
 $\pi(e(\beta,i))=\widetilde{e(\beta,i)}$ and  $\mu_{\tau_n}\pi(e(\beta,i))=e(\beta,i)$. So by the uniqueness in the definition of $\widetilde{ax_{n+1}}$ (\cite[(8)]{SVV}) we can deduce that $\widetilde{ax_{n+1}}=b + \hat\varepsilon_{i,\widetilde{\lam}}(a)\pi(e(\beta,i))$.
Therefore, \begin{equation}\label{left4}\hat\varepsilon_{i,\widetilde{\lam}}(ax_{n+1}) = \mu_{1}\big( \widetilde{ax_{n+1}} \big) = \mu_{x_{n}}(\widetilde{a}) + \hat\varepsilon_{i,\widetilde{\lam}}(a)\mu_{1}\big(\pi(e(\beta,i))\big).\end{equation}
          By definition, we have that \begin{equation}\label{muqq4}
          \mu_{1}\bigl((p_{\beta}^{\Lam,i}\otimes p_{\beta}^{\Lam,i})\widetilde{a}\bigr)=p_{\beta}^{\Lam,i}\bigl(\veps_{i,\widetilde{\lam}}(a)\bigr) .
          \end{equation}
We claim that \begin{equation}\label{wtildepa}\widetilde{p_{\beta+\alpha_i}^{\Lam,i}(a)} =(p_{\beta}^{\Lam,i}\otimes p_{\beta}^{\Lam,i})(\widetilde{a})-\mu_1(\widetilde{a})\widetilde{\pi}_0,\end{equation}
where the element $\widetilde{\pi}_0$ in the righthand side of above equality is defined in $\R[\beta]$.

In fact, since $\mu_{\tau_n}(\widetilde{\pi}_0)=0$ and $\mu_{1}(\widetilde{\pi}_0)=1$, our claim follows from the uniqueness (\cite[(8)]{SVV}) in the definition of $\widetilde{p_{\beta+\alpha_i}^{\Lam,i}(a)}$ and the following two equalities: $$
\mu_{\tau_n}\bigl((p_{\beta}^{\Lam,i}\otimes p_{\beta}^{\Lam,i})(\widetilde{a})\bigr)
=p_{\beta+\alpha_i}^{\Lam,i}(a),\,\,\, \mu_{1}\bigl((p_{\beta}^{\Lam,i}\otimes p_{\beta}^{\Lam,i})(\widetilde{a})\bigr)=0.
$$

As a result of (\ref{wtildepa}), we see that $$
\hat\varepsilon_{i,\lam}(p_{\beta+\alpha_i}^{\Lam,i}(a)) = p_{\beta}^{\Lam,i} \Bigl( \mu_{x_{n}}\big( \widetilde{a} \big) \Bigr)-\mu_1(\widetilde{a})\mu_{x_n}(\widetilde{\pi}_0). $$

Now comparing (\ref{left4}) with the above equality, to prove the part 2) of the lemma, it suffices to show that
          \begin{equation}\label{eq:redcase4}
            p_{\beta}^{\Lam,i}\Bigl( \mu_{1}\big(\pi(e(\beta,i)\big) \Bigr) = -\mu_{x_{n}}(\widetilde{\pi}_0).
          \end{equation}
where the element $\pi(e(\nu,i)$ on the left handside of the above equality is defined in $\RR_{\beta+\alpha_i}^{\widetilde{\Lam}}$, while the element
$\widetilde{\pi}_0$ on the right handside of the above equality is defined in $\R[\beta]$. The above equality follows from Proposition~\ref{prop:IndBubb}, the fact that $p_\beta^{\Lam,i}(B_{+i, \widetilde{\lambda}}^1) = B_{+i, \lambda}^1$ and the equalities (\cite[Definition A.1]{SVV} $$\begin{aligned}
B_{+i,\widetilde{\lam}}^1&= \mu_1(\widetilde{e(\beta,i)})=\mu_{1}\big(\pi(e(\beta,i)\big),\\
B_{+i,\lam}^1&=-\mu_{x_{n}}(\widetilde{\pi}_0)
\end{aligned}
$$ in this case.
This completes the proof of the lemma.
\end{proof}

\begin{dfn} Let $\beta\in Q_n^+$. For any $i\in I$, we define
\begin{equation}\label{eq:DC}
  z(i,\beta) = \sum_{\nu \in I^\beta} \prod_{\substack{1\leq k\leq n\\ \nu_k = i}} x_ke(\nu) \in \RSS_\beta .
\end{equation}
For any $\Lam'=\sum_{j\in I}n_j\Lam_j\in P^+$, we define \begin{equation}\label{eq:DC2}
z(\Lambda' , \beta) = \prod_{i\in I} z(i,\beta)^{n_i} \in \RSS_\beta ,
\end{equation}
\end{dfn}
In particular, $z(\Lambda_i,\beta) = z(i,\beta)$. The element $z(i,\beta)$ is a symmetric elements in the KLR generators $x_1,\cdots,x_n,e(\nu), \nu\in I^\beta$. It follows from Proposition~\ref{prop:CenKLR} that $z(i,\beta)$ and hence $z(\Lam',\beta)$ belong to $Z(\RSS_\beta)$.

Henceforth, by some abuse of notations, we shall often use the same notations $z(i,\beta), z(\Lam',\beta)$ to denote their images in the cyclotomic KLR algebra $Z(\RSS_\beta^{\widetilde{\Lambda}})$.
The following theorem is the main result of this section.

\begin{thm}\label{thm:annker} Let $\Lam\in P^+, \beta\in Q_n^+$ and $i\in I$. Then $$
\iota_\beta^{\Lambda,i}(1_{\R[\beta]})=z(i,\beta)\in\RR_\beta^{\Lam+\Lam_i}.
$$
In particular, for any $a\in\RR_\beta^{\Lam+\Lam_i}$, $$
\bar{\iota}_\beta^{\Lambda,i}(p_\beta^{\Lam,i}(a)+[\R[\beta],\R[\beta]])=az+[\RR_\beta^{\Lam+\Lam_i},\RR_\beta^{\Lam+\Lam_i}] ,
$$
and $\Ann_{\RSS^{\Lam+\Lam_i}_\beta}(z(i,\beta)) = \Ker (p_\beta^{\Lam,i})$.
\end{thm}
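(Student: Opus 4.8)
The plan is to verify the identity $\iota_\beta^{\Lam,i}(1_{\R[\beta]})=z(i,\beta)$ by appealing to Lemma~\ref{lem:injdual}, which already tells us that $z:=\iota_\beta^{\Lam,i}(1_{\R[\beta]})$ is the \emph{unique} central element of $\RR_\beta^{\Lam+\Lam_i}$ satisfying $t_{\Lam+\Lam_i,\beta}(az)=t_{\Lam,\beta}(p_\beta^{\Lam,i}(a))$ for all $a\in\RR_\beta^{\Lam+\Lam_i}$. Since $z(i,\beta)$ is already known to lie in $Z(\RR_\beta^{\Lam+\Lam_i})$ (by Proposition~\ref{prop:CenKLR}, as noted just before the theorem), it suffices to check the single identity
\begin{equation*}
t_{\Lam+\Lam_i,\beta}\bigl(a\cdot z(i,\beta)\bigr)=t_{\Lam,\beta}\bigl(p_\beta^{\Lam,i}(a)\bigr),\qquad\forall\,a\in\RR_\beta^{\Lam+\Lam_i}.
\end{equation*}
Once this is established, the remaining assertions follow immediately: Lemma~\ref{lem:injdual} gives $\iota_\beta^{\Lam,i}(p_\beta^{\Lam,i}(a))=a\,z(i,\beta)$, hence the induced formula for $\bar\iota_\beta^{\Lam,i}$ on cocenters, and also $\Ann_{\RSS_\beta^{\Lam+\Lam_i}}(z(i,\beta))=\Ker(p_\beta^{\Lam,i})$.

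For the main identity, I would first reduce to bi-weight spaces. Both $t_{\Lam+\Lam_i,\beta}$ and $t_{\Lam,\beta}$ are defined componentwise on the $e(\nu)(-)e(\nu')$ decomposition and vanish off the diagonal $\nu=\nu'$; multiplication by the central element $z(i,\beta)$ preserves each $e(\nu)\RR_\beta^{\Lam+\Lam_i}e(\nu)$ and acts there as multiplication by the scalar-polynomial $\prod_{k:\nu_k=i}x_k$. So it is enough to show, for each fixed $\nu\in I^\beta$ and each $a\in e(\nu)\RR_\beta^{\Lam+\Lam_i}e(\nu)$, that
\begin{equation*}
r_\nu\,\veps_{1,\nu_1}\circ\cdots\circ\veps_{n,\nu_n}\Bigl(a\prod_{k:\nu_k=i}x_k\,e(\nu)\Bigr)
= r_\nu\,\veps_{1,\nu_1}^{\Lam}\circ\cdots\circ\veps_{n,\nu_n}^{\Lam}\bigl(p_\beta^{\Lam,i}(a)e(\nu)\bigr),
\end{equation*}
where the superscript $\Lam$ on the right indicates the maps $\veps_{k,\nu_k}$ for the smaller cyclotomic parameter. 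The factor $r_\nu$ is common to both sides, so it cancels, and everything reduces to a statement about iterated composites of the $\veps$-maps. The natural strategy is induction on $n$: peel off the outermost map $\veps_{n,\nu_n}$. Here there is a dichotomy according to whether $\nu_n=i$ or $\nu_n\neq i$.

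If $\nu_n\neq i$, then the innermost variable $x_n$ does not appear in the product $\prod_{k:\nu_k=i}x_k$, so $a\prod_{k:\nu_k=i}x_k$ lies in $e(\nu',i')\RR_{\beta}^{\widetilde\Lam}e(\nu',i')$ with $i'=\nu_n$ and the product involves only the first $n-1$ variables; part~(1) of Lemma~\ref{keyDifflem} (with $j=\nu_n\neq i$) shows that applying $\veps_{n,\nu_n}$ and then $p$ agrees with applying $p$ and then $\veps_{n,\nu_n}^\Lam$, reducing to the $(n-1)$-variable case for $\beta-\alpha_{\nu_n}$. If $\nu_n=i$, one writes $a\prod_{k:\nu_k=i}x_k = (a\prod_{k<n:\nu_k=i}x_k)\,x_n$, pulls the factor $\prod_{k<n:\nu_k=i}x_k$ out past the outer $\veps$ (it commutes, being a polynomial in the first $n-1$ variables — use the $\RR_{\beta-\alpha_i}^{\widetilde\Lam}$-bilinearity of $\veps$ recorded in \cite[(6),(8)]{SVV}), and applies part~(2) of Lemma~\ref{keyDifflem}, which is precisely the statement that $p_\beta^{\Lam,i}(\veps_{i,\widetilde\lam}(ax_{n+1}))=\veps_{i,\lam}(p_{\beta+\alpha_i}^{\Lam,i}(a))$ — i.e. multiplying by the extra $x_n$ before applying $\veps$ at parameter $\widetilde\Lam$ matches applying $\veps$ directly at parameter $\Lam$. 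In both cases the induction hypothesis then finishes the job; the base case $n=0$ (where $\beta=0$, $\RR_0^\Lam\cong\kb$, $z(i,0)=1$) is trivial.

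The main obstacle, and the heart of the argument, is the $\nu_n=i$ step: controlling how the single extra variable $x_n$ interacts with the bubble map $\veps_{i,\widetilde\lam}$ as the cyclotomic parameter drops from $\widetilde\Lam=\Lam+\Lam_i$ to $\Lam$. This is exactly what Lemma~\ref{keyDifflem}(2) was built for, and its proof (the four-case analysis on the sign of $\lam_i=\langle\alpha_i^\vee,\Lam-\beta\rangle$) rests on the explicit bubble formulae of Proposition~\ref{prop:IndBubb} together with the compatibility $p_\beta^{\Lam,i}(B_{+i,\widetilde\lambda}^1)=B_{+i,\lambda}^1$. So once Lemma~\ref{keyDifflem} is granted, the proof of Theorem~\ref{thm:annker} is a clean induction; the delicate bookkeeping has been front-loaded into that lemma. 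I would also double-check the degree bookkeeping: $t_{\Lam+\Lam_i,\beta}$ has degree $-d_{\Lam+\Lam_i,\beta}$ and $t_{\Lam,\beta}$ has degree $-d_{\Lam,\beta}$, while $z(i,\beta)$ has degree $d_{\Lam+\Lam_i,\beta}-d_{\Lam,\beta}=2(\Lam_i,\beta)=\sum_{k:\nu_k=i}(\alpha_i,\alpha_i)$ on the $e(\nu)$-component, so both sides of the main identity are homogeneous of the same degree and the claim is at least consistent gradings-wise.
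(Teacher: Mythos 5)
Your proposal is correct and follows essentially the same route as the paper: reduce via Lemma~\ref{lem:injdual} to the trace identity $t_{\Lam+\Lam_i,\beta}(a\,z(i,\beta))=t_{\Lam,\beta}(p_\beta^{\Lam,i}(a))$ on diagonal bi-weight components, then induct on $|\beta|$ by peeling off the outermost $\veps$-map (equivalently the recursion $t_{\Lam,\beta+\alpha_j}=r(\beta,j)\,t_{\Lam,\beta}\circ\hat\varepsilon_{j,\lam}$ of \cite[(64)]{SVV}), splitting into the cases $\nu_n\neq i$ and $\nu_n=i$ and invoking parts (1) and (2) of Lemma~\ref{keyDifflem} together with the bilinearity of the $\veps$-maps. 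The only differences are cosmetic (base case $n=0$ versus $|\beta|=1$, and indexing of the peeled variable), so no further changes are needed.
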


\begin{proof} Set $\widetilde{\Lam}:=\Lam+\Lam_i$. Note that $t_{\Lam,\beta}(e(\mu)xe(\nu))=0$ whenever $\mu\neq\nu\in I^\beta$. In view of Lemma \ref{lem:injdual}, to prove the theorem, it suffices to show that for any $\nu \in I^\beta$ and $a \in e(\nu) \RSS_\beta^{\widetilde{\Lambda}}e(\nu)$,
 \begin{equation}\label{eq:traiden}
    t_{\widetilde{\Lambda},\beta}(az(i,\beta)) = t_{\Lambda ,\beta}(p_{\beta+\alpha_i}^{\Lam,i}(a)).
  \end{equation}

We show this by induction on $n = |\beta|$. Suppose $\beta = \alpha_j$ for some $j\in I$. Then we have
  \begin{enumerate}
    \item[i)] If $j \neq i$, then $I_{\Lam,\beta}=I_{\widetilde{\Lam},\beta}$, hence $\R[\beta]=\RR_\beta^{\widetilde{\Lam}}$. In this case, $p_{\beta}^{\Lambda,i}$ is a natural isomorphism and $z(i,\beta)$ is the identity, we are done;
    \item[ii)] If $j = i$, then $p_{\beta}^{\Lambda,i}$ can be identified with the canonical projection: $$
    \kb [x_1] /(x_1^{r+1}) \to \kb [x_1]/(x_1^r),\qquad x_1^a+(x_1^r)\mapsto x_1^a+(x_1^r),$$
     where $r = \la \alpha_i^\vee , \Lambda \ra$. In this case, $z(i,\beta)=x_1$, the desired equality \eqref{eq:traiden} of traces follows immediate since in this case the trace function $t_{\widetilde{\Lambda},\beta}$ (resp. $t_{\Lambda,\beta}$) sends a polynomial $f(x_1)$ in $x_1$ to its coefficient of $x_1^{r+1}$ (resp. $x_1^r$).
  \end{enumerate}

  Suppose that \eqref{eq:traiden} holds for any $\beta\in Q_n^+$ with $n\geqslant 1$, we want to show that it holds for any $\beta\in Q_{n+1}^+$. Let $\beta \in Q_n^+$, $\nu \in I^\beta$, $j \in I$ and $a \in e(\nu , j)\RSS_{\beta +\alpha_j}^{\widetilde{\Lambda}}e(\nu,j)$.
  \begin{enumerate}
    \item[a)] If $j \neq i$, then $\widetilde{\lam}_j=\lam_j$ and $az(i,\beta +\alpha_j) = e(\nu,j)az(i,\beta)e(\nu,j)$, we have
        \begin{equation*}
          \begin{aligned}
            t_{\widetilde{\Lambda},\beta+\alpha_j}(az(i,\beta + \alpha_j)) & = r(\beta ,j)t_{\widetilde{\Lambda},\beta}(\hat\varepsilon_{j,\widetilde{\lam}}(az(i,\beta+\alpha_j))) \\
            & = r(\beta ,j)t_{\widetilde{\Lambda},\beta}(\hat\varepsilon_{j,\widetilde{\lam}}(a)z(i,\beta)) \\
            & = r(\beta,j)t_{\Lambda,\beta}(p_\beta^{\Lam,i}(\hat\varepsilon_{j,\widetilde{\lam}}(a))) \\
            & = r(\beta,j)t_{\Lambda,\beta}(\hat\varepsilon_{j,\lam}(p_{\beta+\alpha_j}^{\Lam,i}(a))) \\
            & = t_{\Lambda,\beta+\alpha_j}(p_{\beta+\alpha_i}^{\Lam,i}(a)),
          \end{aligned}
        \end{equation*}
        where the first and the last equalities are \cite[(64)]{SVV},
        the second one follows from the $\RR_\beta^{\widetilde{\Lambda}}$-bilinearity of $\hat\varepsilon_{j,\widetilde{\lambda}}$ (see its definition in \cite[Theorem 3.8]{SVV}),
        the third one follows from induction hypothesis, the fourth one follows from Lemma \ref{keyDifflem}.

    \item[b)] If $j = i$, then $\widetilde{\lam}_j=\lam_j+1$ and $az(\beta+\alpha_i) = ax_{n+1} \cdot z(\beta)e(\beta,i)$, we have similarly
        $$t_{\widetilde{\Lambda},\beta+\alpha_i}(az(i,\beta +\alpha_i)) = r(\beta,i)t_{\Lambda,\beta}(p_{\beta}^{\Lam,i}(\hat\varepsilon_{i,\widetilde{\lambda}}(ax_{n+1}))$$
        and
        $$t_{\Lambda,\beta+\alpha_i}(p_{\beta+\alpha_i}^{\Lam,i}(a)) = r(\beta,i)t_{\Lambda,\beta}(\hat\varepsilon_{i,\lambda}(p_{\beta+\alpha_i}^{\Lam,i}(a))).$$
By Lemma \ref{keyDifflem}, we have $p_{\beta}^{\Lam,i}(\hat\varepsilon_{j,\widetilde{\lambda}}(ax_{n+1})) =
\hat\varepsilon_{j,\lambda}(p_{\beta+\alpha_i}^{\Lam,i}(a))$. It follows that (\ref{eq:traiden}) holds in this case.
        \end{enumerate}
This completes the proof of the theorem.
        \end{proof}

\begin{rem} We remark that Theorem~\ref{thm:annker} still holds when
replacing $z(i,\beta), \Lam+\Lam_i$ by $z(\Lambda',\beta), \Lam+\Lam'$ respectively for arbitrary $\Lam'\in P^+$.
One can prove this by applying Theorem~\ref{thm:annker} repeatedly. Alternatively, one can also prove it directly. The key part in the argument is to check
in the case when $i = j$ the following equality of bubbles:
\begin{equation*}
  p_\beta^{\Lam,\Lam'}(B_{+i, \widetilde{\lambda}}^k) = B_{+i, \lambda}^k \qquad \forall\, 0\leqslant k \leqslant n_i.
\end{equation*}
where $n_i = \la \alpha_i^\vee , \Lambda' \ra$.
\end{rem}

Combine Lemma~\ref{lem:injdual} and Theorem~\ref{thm:annker}, we see that the map $\ov{\iota}_\beta^{\Lambda,i}: \RR_\beta^{\Lam}/[\R[\beta],\R[\beta]]\rightarrow
\RR_\beta^{\Lam+\Lam_i}/[\RR_\beta^{\Lam+\Lam_i},\RR_\beta^{\Lam+\Lam_i}]$ is induced by the $(\RSS_\beta , \RSS_\beta)$-bilinear morphism
$$\iota_\beta^{\Lambda_i} : \RSS_\beta \hookrightarrow \RSS_\beta, \qquad x \mapsto z(i,\beta)x. $$
It is obvious that $\iota_\beta^{\Lambda_i}$ sends monomial basis to monomial basis. Therefore, to show the Center Conjecture \ref{conj:CC}, it suffices to find out a monomial basis
$T_\beta^\Lambda$ for the cocenters $\RSS_\beta^\Lambda / [\RSS_\beta^\Lambda,\RSS_\beta^\Lambda]$ as well as a monomial basis
$T_\beta^{\widetilde{\Lambda}}$ for the cocenters $\RSS_\beta^{\widetilde{\Lam}}/ [\RSS_\beta^{\widetilde{\Lam}},\RSS_\beta^{\widetilde{\Lam}}]$, and to show that
the image of $T_\beta^\Lambda$ under $\ov{\iota}_\beta^{\Lambda,i}$ is a subset of $T_\beta^{\widetilde{\Lambda}}$ and hence are $K$-linearly independent. We name this way as ``cocenter approach'' to the Center Conjecture \ref{conj:CC}. In Section~\ref{sec:MBCocen} we shall construct such monomial bases for some special $\beta$ and use the above ``cocenter approach'' to verify the Center Conjecture \ref{conj:CC} for these special $\beta$.

\section{Bases of cyclotomic KLR algebras and their defining ideals} \label{sec:MBCKLR}

In this section we study the defining ideal $I_{\Lambda ,\beta}$ for $\R[\beta]$. We construct a monomial bases for certain bi-weight space $e(\widetilde{\nu})I_{\Lam,\beta}e(\nu)$ of $I_{\Lambda ,\beta}$. As a result,
we recover a result \cite[Theorem 1.5]{HS} on the monomial bases of the  bi-weight space $e(\widetilde{\nu})\R[\beta]e(\nu)$, in particular, a result  on the monomial bases of the cyclotomic KLR algebras $\R[\beta]$ when $\beta$ satisfying (\ref{eq:DR}). Compared to \cite{HS} and \cite{HS2}, our approach to these monomial bases is elementary in the sense that it does not use the Cyclotomic Categorification Conjecture (\cite{KL09},\cite{KK12}). Moreover, throughout this section, we work over an arbitrary domain $\kb$, and do not need to assume that $\kb$ is a field or a Noetherian domain as required in \cite{HS} and \cite{HS2}.

Note that $I_{\Lambda,\beta}$ is a (right) $\kb [x_1,\ldots,x_n]$ submodule of $\RSS_\beta$, we need the following useful lemma to construct monomial basis of $\kb [x_1,\ldots,x_n]$-modules:

\begin{lem}\label{lem:keylem}
1) Let $R$ be a commutative ring, $g(x) \in R[x]$ be a monic polynomial whose leading term has degree $k$, then $R[x] / (g(x))$ has an $R$-basis $\{ 1, x,\ldots, x^{k-1} \}$.

2) Assume for each $1\leq t\leq n$, there is a polynomials $g_t \in R [x_1,\ldots , x_t]$ satisfying that $g_t$ is monic when regarding as a polynomial in $x_t$ and its leading term is of degree $a_t$. Then the canonical image of the set
  $$B_1 := \{ x^{\un{b}} \mid \un{b}\in \NM^n, \, b_t <a_t,\, \forall\, 1\leqslant t \leqslant n \}$$
  form an $R$-basis of $R [x_1,\ldots , x_n]/ (g_1,\ldots , g_n)$.
  Consequently, the ideal $(g_1,\ldots , g_n)\subset R [x_1,\ldots ,x_n]$ has a basis
  $$B_2 := \Bigl\{ x^{\un{b}}g_k \Bigm|\begin{matrix} \text{$\un{b}=(b_1,\cdots,b_n)\in\NM^n$, $1\leq k\leq n$, and $b_t < a_t$ whenever $k< t \leqslant n$}\end{matrix}\Bigr\},$$
where $x^{\un{b}}:=x_1^{b_1}\cdots x_n^{b_n}$ for each $\un{b}$.
\end{lem}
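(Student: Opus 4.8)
The plan is to handle part 1) first as a standard fact about quotients by a monic polynomial, then derive part 2) from it by an iterated application, and finally extract the basis $B_2$ of the ideal from the basis $B_1$ of the quotient.

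For part 1), I would argue directly. Since $g(x)$ is monic of degree $k$, the division algorithm in $R[x]$ (which works because the leading coefficient of $g$ is a unit, namely $1$) shows that every $f\in R[x]$ can be written uniquely as $f = qg + r$ with $\deg r < k$; this gives that the canonical images of $1, x, \ldots, x^{k-1}$ span $R[x]/(g(x))$. For linear independence, suppose $\sum_{j=0}^{k-1} c_j x^j \in (g(x))$, say equals $h(x) g(x)$. If $h\neq 0$ then $h(x)g(x)$ has degree $\geq k$ (again using that the leading coefficient of $g$ is a unit, so no cancellation in the top term), contradicting that the left side has degree $<k$. Hence $h=0$ and all $c_j=0$. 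So $\{1,x,\ldots,x^{k-1}\}$ is an $R$-basis.

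For part 2), I would view $R[x_1,\ldots,x_n]/(g_1,\ldots,g_n)$ as being built up one variable at a time. Set $R_0 := R$ and inductively $R_t := R_{t-1}[x_t]/(\bar g_t)$, where $\bar g_t$ is the image of $g_t$; since $g_t$ is monic in $x_t$ of degree $a_t$ with coefficients in $R[x_1,\ldots,x_{t-1}]$, its image $\bar g_t$ is monic in $x_t$ of degree $a_t$ with coefficients in $R_{t-1}$, so part 1) applies and gives that $R_t$ is free over $R_{t-1}$ with basis $\{1, x_t, \ldots, x_t^{a_t-1}\}$. A routine identification shows $R_n \cong R[x_1,\ldots,x_n]/(g_1,\ldots,g_n)$ (the ideal generated by $g_1,\ldots,g_n$ is exactly what one quotients by successively, since $g_s$ for $s<t$ already lies in the kernel at stage $t$). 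Composing the successive free basis extensions, $R_n$ is free over $R$ with basis the monomials $x^{\underline b}$ with $b_t < a_t$ for all $t$, i.e.\ $B_1$. Concretely, iterating the division algorithm: any $f\in R[x_1,\ldots,x_n]$ is congruent mod $(g_1,\ldots,g_n)$ to a unique $R$-linear combination of elements of $B_1$, obtained by first reducing the $x_n$-degree below $a_n$ using $g_n$, then the $x_{n-1}$-degree below $a_{n-1}$ using $g_{n-1}$ (which does not disturb the $x_n$-degree bound, as $g_{n-1}$ does not involve $x_n$), and so on.

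For the basis $B_2$ of the ideal $J := (g_1,\ldots,g_n)$, I would proceed again by a filtration/induction on $n$. Let $J_k := (g_1,\ldots,g_k)$ for $0\leq k\leq n$, so $J_0 = 0 \subsetneq J_1 \subsetneq \cdots \subsetneq J_n = J$. The key step is to show that $J_k / J_{k-1}$ is free over $R$ with basis the images of $\{x^{\underline b} g_k : \underline b\in\NM^n,\ b_t < a_t \text{ for } k < t\leq n\}$; granting this for all $k$, splicing the filtration yields that $J = J_n$ is free with basis $B_2 = \bigsqcup_{k=1}^n \{x^{\underline b} g_k : b_t < a_t \text{ for } k<t\leq n\}$. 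To analyze $J_k/J_{k-1}$, note that $R[x_1,\ldots,x_n]/J_{k-1}$ is, by part 2) applied to the first $k-1$ generators together with the remaining variables $x_k,\ldots,x_n$ treated freely, a free $R[x_k,\ldots,x_n]$-module with basis $\{x_1^{b_1}\cdots x_{k-1}^{b_{k-1}} : b_t<a_t\}$; and $J_k/J_{k-1}$ is the image of multiplication by $\bar g_k$ on this module. Since $\bar g_k$ is monic in $x_k$ of degree $a_k$, multiplication by $\bar g_k$ is injective (no zero divisor issue: the top $x_k$-coefficient of $\bar g_k\cdot h$ is the top coefficient of $h$ in each free summand), so $J_k/J_{k-1}\cong R[x_1,\ldots,x_n]/J_{k-1}$ as $R$-modules via $\cdot\bar g_k$, and a free $R$-basis of the target is $\{x^{\underline b} : b_t<a_t,\ 1\leq t\leq n\}$ — but pulling back along the isomorphism and re-expanding, the $x_k$-degree is no longer constrained while the degrees in $x_{k+1},\ldots,x_n$ remain bounded by $a_{k+1},\ldots,a_n$ and the degrees in $x_1,\ldots,x_{k-1}$ can be taken in $\{0,\ldots,a_1-1\},\ldots,\{0,\ldots,a_{k-1}-1\}$ by reducing mod $g_1,\ldots,g_{k-1}$. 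Collecting these gives precisely $\{x^{\underline b} g_k : b_t<a_t \text{ for } k<t\leq n\}$ as a basis of $J_k/J_{k-1}$, as claimed.

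I expect the main obstacle to be the bookkeeping in the last paragraph: one must be careful that reducing the low-index exponents modulo $g_1,\ldots,g_{k-1}$ genuinely stays inside $J_k$ (which it does, since one only adds elements of $J_{k-1}\subseteq J_k$) and that the resulting set is both spanning and $R$-independent modulo $J_{k-1}$ — independence following from the fact that $\cdot\bar g_k$ is an $R$-module isomorphism onto $J_k/J_{k-1}$ together with the freeness statement of part 2). The rest — division algorithms, monicity, freeness of successive extensions — is routine.
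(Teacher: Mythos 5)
Part 1) and your treatment of $B_1$ are fine and are essentially the paper's argument (iterate the one-variable case through $R[x_1,\ldots,x_n]/(g_1,\ldots,g_n)\cong R'[x_n]/(g_n)$). The gap is in your key step for $B_2$. With your filtration $J_k=(g_1,\ldots,g_k)$, the isomorphism $J_k/J_{k-1}\cong R[x_1,\ldots,x_n]/J_{k-1}$ given by multiplication by the image of $g_k$ is correct, but the target has $R$-basis $\{x^{\un{b}} \mid b_t<a_t \text{ for } 1\leq t\leq k-1\}$, with the variables $x_k,\ldots,x_n$ unconstrained (it is $\bigl(R[x_1,\ldots,x_{k-1}]/(g_1,\ldots,g_{k-1})\bigr)[x_k,\ldots,x_n]$), not $\{x^{\un{b}} \mid b_t<a_t \text{ for all } t\}$ as you wrote. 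Hence the graded piece $J_k/J_{k-1}$ is free on the images of $\{x^{\un{b}}g_k \mid b_t<a_t \text{ for } t<k\}$ (constraints on the \emph{low}-index exponents), and not on the images of $\{x^{\un{b}}g_k \mid b_t<a_t \text{ for } k<t\leq n\}$ as you claim. Your claimed set is not even independent modulo $J_{k-1}$: already for $n=2$, $g_1=x_1$, $g_2=x_2$, $k=2$, the element $x_1g_2=x_1x_2=x_2g_1$ lies in $J_1$, so its image in $J_2/J_1$ vanishes although it belongs to your proposed basis of that quotient. Splicing your graded pieces therefore does not prove the lemma; splicing the correct ones proves that $\{x^{\un{b}}g_k \mid b_t<a_t \text{ for } t<k\}$ is a basis of the ideal, which is a valid statement but not the set $B_2$ of the lemma.

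The fix within your strategy is to run the filtration in the opposite order: set $J'_k:=(g_k,g_{k+1},\ldots,g_n)$. By the $B_1$ statement applied over the base ring $R[x_1,\ldots,x_k]$, the quotient $R[x_1,\ldots,x_n]/J'_{k+1}$ is free over $R[x_1,\ldots,x_k]$ with basis the monomials $x_{k+1}^{b_{k+1}}\cdots x_n^{b_n}$ with $b_t<a_t$; since $g_k$ lies in that base ring and is monic in $x_k$, it is a non-zero-divisor, so multiplication by its image is injective and $J'_k/J'_{k+1}$ is free on the images of $\{x^{\un{b}}g_k \mid b_t<a_t \text{ for } k<t\leq n\}$, which is exactly the $k$-th block of $B_2$; splicing this filtration gives the lemma. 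The paper argues differently and in one stroke: with the anti-lexicographic order, each $x^{\un{b}}g_k\in B_2$ has unit leading monomial $x^{\un{b}}x_k^{a_k}$, and these leading monomials are pairwise distinct across $B_2$ (this is where the condition $b_t<a_t$ for $t>k$ enters), which gives linear independence, while reduction of an arbitrary $fg_k$ against $g_n,\ldots,g_{k+1}$ shows spanning. Either route works; your proposal as written does not.
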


\begin{proof} The part 1) of the lemma is trivial. We now consider the part 2) of the lemma. Set $R':=R[x_1,\ldots , x_{n-1}]/(g_1,\ldots , g_{n-1})$. Note that
  $$R[x_1,\ldots , x_n] / (g_1,\ldots , g_n) \simto R'[x_n] / (g_n),$$
One could argue by induction on $n$ to prove that $B_1$ is an $R$-basis of $R [x_1,\ldots , x_n]/ (g_1,\ldots , g_n)$.

It remains to prove the last statement of the part 2) of the lemma. Recall the anti-lexicographic order on monomials:
  $$x^{\un{b}} \prec x^{\un{c}} \Leftrightarrow \exists 1\leqslant k \leqslant n,\, \text{such that $b_k < c_k$ and $b_t = c_t, \,\forall\, k<t\leqslant n$}.$$
  Then we have
  $$x_1^{b_1}\cdots x_n^{b_n}g_k = c\cdot x_1^{b_1} \cdots x_k^{b_k + a_k}\cdots  x_n^{b_n} + \text{``lower terms''},$$
  where $c\in R^\times$ and ``lower terms'' is a $R$-linear combination of monomials $x^{\un{c}}$ such that $x^{\un{c}} \prec  x^{\un{b}}\cdot x_k^{a_k}$.
  This yields that the elements in $B_2$ are $R$-linearly independent.   Denote by $I$ the $R$-submodule of $R [x_1,\ldots ,x_n]$ generated by $B_2$, then $I\subset (g_1,\ldots ,g_n )$.
On the other hand, it is easy to see that for any $1\leq k\leq n$ and $f\in R[x_1,\cdots,x_n]$ that $fg_k\in I$. In fact, one can easily see that subtracting off suitable $R[x_1,\cdots,x_n]$-linear combination of $g_n,\cdots,g_{t+1}$ from $fg_k$ will produce an element living inside $$
\text{$R$-Span}\Bigl\{ x^{\un{b}}g_k \Bigm|\begin{matrix} \text{$\un{b}=(b_1,\cdots,b_n)\in\NM^n$, and $b_t < a_t$ whenever $k< t \leqslant n$}\end{matrix}\Bigr\}.
$$
This completes the proof of the lemma.
\end{proof}

\begin{dfn}\label{specialnu} Suppose that $\beta = \sum_{j=1}^{p}k_j\alpha_{i_j}\in Q_n^+$, where $i_s \neq i_t \in I$ for any $1\leq s\neq t\leq p$.
Let $\widetilde{\nu} \in I^\beta$ be of the form
$$\widetilde{\nu} = (\ubr{i_1,\ldots,i_1}{k_{1}\,\text{copies}},\ubr{i_2,\ldots,i_2}{k_{2}\,\text{copies}},\ldots , \ubr{i_p,\ldots,i_p}{k_{p}\,\text{copies}}).$$
We define $i_s \prec i_t\, \Leftrightarrow \, s<t$.
\end{dfn}
Let $w\in\mathfrak{S}_n$. Note that $e(\widetilde{\nu})\tau_w$ is independent of the choice of the reduced expressions of $w\in \SG_n$ since there is no triple
$(r , s ,t )$ satisfying $r < s < t$ and $\widetilde{\nu}_r = \widetilde{\nu}_t \neq \widetilde{\nu}_s$.
So we can choose any reduced expression of $w$ to calculate $e(\widetilde{\nu})\tau_w$. In particular, $e(\widetilde{\nu})\tau_w = e(\widetilde{\nu})\tau_u\tau_t$ whenever $w = us_t$ with $\ell (w) = \ell (u) +1$.

For any $\nu,\nu'\in I^\beta$, we define $\mathfrak{S}(\nu,\nu'):=\{w\in\mathfrak{S}_n|w\nu=\nu'\}$.
The following is the main result of this section.

\begin{thm}\label{thm:MBBWS}
Assume that $\Lambda \in P^+$. Let $\beta \in Q_n^+$ and $\widetilde{\nu}\in I^\beta$ be given as in Definition \ref{specialnu}.
For any $\nu \in I^\beta$, we set $g_{\nu,1}^\Lambda = a_{\nu_1}^\Lambda (x_1)$, and for $1\leqslant k < n$, define
  \begin{equation*}
    g^\Lambda_{\nu,k+1} := \begin{cases}
      \del_k (g^\Lam_{\nu ,k}) & \text{if } \nu_k = \nu_{k+1}; \\
      s_k (g^\Lam_{s_k.\nu,k}) & \text{if } \nu_k \succ \nu_{k+1}; \\
      s_k (g^\Lam_{s_k.\nu,k}) \cdot Q_{\nu_k,\nu_{k+1}}(x_k,x_{k+1}) & \text{if } \nu_k \prec \nu_{k+1}.
    \end{cases}
  \end{equation*}
Then  $g_{\nu,k} \in \kb [x_1,\ldots , x_k]$ is a monic polynomial on $x_k$, and as $\kb [x_1,\ldots ,x_n]$-submodule of $e(\widetilde{\nu})\RSS_\beta e(\nu)$, $e(\widetilde{\nu})I_{\Lambda ,\beta}e(\nu)$ is generated by the set
  $$\{ \tau_wg^\Lambda_{\nu,k}e(\nu)\mid 1\leqslant k\leqslant n, \, w\in \SG (\nu,\widetilde{\nu}) \}.$$
\end{thm}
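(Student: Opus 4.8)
The proof divides into three parts: (a) the monicity of the $g^\Lambda_{\nu,k}$, (b) the inclusion $\tau_w g^\Lambda_{\nu,k}e(\nu)\in e(\widetilde\nu)I_{\Lambda,\beta}e(\nu)$ for $w\in\SG(\nu,\widetilde\nu)$, and (c) generation. For (a) I would induct on $k$: the base case $g^\Lambda_{\nu,1}=a^\Lambda_{\nu_1}(x_1)$ is monic of degree $\la\alpha^\vee_{\nu_1},\Lambda\ra$; in the case $\nu_k=\nu_{k+1}$, if $g^\Lambda_{\nu,k}=x_k^d+(\text{lower in }x_k)$ then $\del_k(g^\Lambda_{\nu,k})=-x_{k+1}^{d-1}+(\text{lower})$; in the case $\nu_k\succ\nu_{k+1}$, applying $s_k$ carries a monic polynomial of degree $d$ in $x_k$ to one monic of degree $d$ in $x_{k+1}$; in the case $\nu_k\prec\nu_{k+1}$ one uses in addition that $Q_{\nu_k,\nu_{k+1}}(x_k,x_{k+1})=Q_{\nu_{k+1},\nu_k}(x_{k+1},x_k)$ is, by the grading conventions together with $c_{\nu_{k+1},\nu_k,-a_{\nu_{k+1}\nu_k},0}\in\kb_0^\times$, monic of degree $-a_{\nu_{k+1}\nu_k}$ in $x_{k+1}$. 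Tracking degrees through the recursion yields the closed formula $\deg_{x_k}g^\Lambda_{\nu,k}=\la\alpha^\vee_{\nu_k},\Lambda\ra-\#\{j<k:\nu_j=\nu_k\}-\sum_{j<k,\ \nu_j\prec\nu_k}\la\alpha^\vee_{\nu_k},\alpha_{\nu_j}\ra$, with the convention that $g^\Lambda_{\nu,k}=0$ once this becomes negative (in that case some earlier $g^\Lambda_{\nu,t}$ is a nonzero constant, both sides of the asserted equality fill the whole bi-weight space, and there is nothing to prove).

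For (b) I would again induct on $k$, proving simultaneously for all $\nu\in I^\beta$ and all $w\in\SG(\nu,\widetilde\nu)$ that $e(\widetilde\nu)\tau_w g^\Lambda_{\nu,k}e(\nu)\in I_{\Lambda,\beta}$; the case $k=1$ holds since $a^\Lambda_{\nu_1}(x_1)e(\nu)\in I_{\Lambda,\beta}$. The three ingredients are: $\SG(s_k.\nu,\widetilde\nu)=\SG(\nu,\widetilde\nu)s_k$; since $\widetilde\nu$ is $\preceq$-sorted, every $w\in\SG(\nu,\widetilde\nu)$ has $\ell(ws_k)=\ell(w)-1$ when $\nu_k\succ\nu_{k+1}$ and $\ell(ws_k)=\ell(w)+1$ when $\nu_k\prec\nu_{k+1}$; and $\tau_k fe(\nu)=(s_kf)\tau_ke(\nu)$ when $\nu_k\neq\nu_{k+1}$, while $\tau_k^2e(\nu)=Q_{\nu_k,\nu_{k+1}}(x_k,x_{k+1})e(\nu)$, which vanishes when $\nu_k=\nu_{k+1}$. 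Thus when $\nu_k\succ\nu_{k+1}$, writing $\tau_w=\tau_{ws_k}\tau_k$ gives $e(\widetilde\nu)\tau_w g^\Lambda_{\nu,k+1}e(\nu)=e(\widetilde\nu)\tau_{ws_k}g^\Lambda_{s_k.\nu,k}e(s_k.\nu)\tau_k$, which lies in $I_{\Lambda,\beta}$ by the inductive hypothesis (applied to $s_k.\nu$ and $ws_k$) and right multiplication by $\tau_k$; when $\nu_k\prec\nu_{k+1}$, writing $\tau_{ws_k}=\tau_w\tau_k$ and multiplying the element $e(\widetilde\nu)\tau_{ws_k}g^\Lambda_{s_k.\nu,k}e(s_k.\nu)\in I_{\Lambda,\beta}$ on the right by $\tau_k$ produces $e(\widetilde\nu)\tau_w g^\Lambda_{\nu,k+1}e(\nu)$, the extra factor $\tau_k^2=Q_{\nu_k,\nu_{k+1}}$ being exactly what enters the recursion; and when $\nu_k=\nu_{k+1}$ (so $s_k.\nu=\nu$), relations (\ref{eq:KK42}) and (\ref{eq:TwoSideDem}) express $e(\widetilde\nu)\tau_w g^\Lambda_{\nu,k+1}e(\nu)$ as $e(\widetilde\nu)\tau_w g^\Lambda_{\nu,k}e(\nu)\tau_k-e(\widetilde\nu)\tau_w\tau_k g^\Lambda_{\nu,k}e(\nu)-e(\widetilde\nu)\tau_w\tau_k g^\Lambda_{\nu,k+1}(x_k-x_{k+1})e(\nu)$, where the last two summands either vanish (when $\ell(ws_k)<\ell(w)$, via $\tau_k^2e(\nu)=0$) or reduce, using $ws_k\in\SG(\nu,\widetilde\nu)$, to instances already proved (when $\ell(ws_k)>\ell(w)$); treating first the $w$ with $\ell(ws_k)<\ell(w)$ and then the rest closes this case.

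For (c), observe first that $e(\widetilde\nu)\RSS_\beta e(\nu)$ is a free right $\kb[x_1,\ldots,x_n]$-module with basis $\{e(\widetilde\nu)\tau_w e(\nu):w\in\SG(\nu,\widetilde\nu)\}$, by Proposition \ref{prop:MBKLR} and the fact recorded before Definition \ref{specialnu} that $e(\widetilde\nu)\tau_w$ is independent of the reduced expression. Since $\tau_w g^\Lambda_{\nu,k}e(\nu)=(e(\widetilde\nu)\tau_w e(\nu))\cdot g^\Lambda_{\nu,k}$, the submodule $N$ they generate equals $\bigoplus_w(e(\widetilde\nu)\tau_w e(\nu))\cdot(g^\Lambda_{\nu,1},\ldots,g^\Lambda_{\nu,n})$, so by part (a) and Lemma \ref{lem:keylem} the quotient $e(\widetilde\nu)\RSS_\beta e(\nu)/N$ has the monomial basis $\{e(\widetilde\nu)\tau_w x^{\un{b}}e(\nu):w\in\SG(\nu,\widetilde\nu),\ b_t<\deg_{x_t}g^\Lambda_{\nu,t}\}$. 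By (b), $N\subseteq e(\widetilde\nu)I_{\Lambda,\beta}e(\nu)$, and the reverse inclusion is what remains. As $I_{\Lambda,\beta}$ is spanned by elements $\xi\,a^\Lambda_{\rho_1}(x_1)e(\rho)\,\eta$ with $\xi,\eta$ monomial basis elements, $a^\Lambda_{\rho_1}(x_1)e(\rho)=g^\Lambda_{\rho,1}e(\rho)$, and $N$ is a right $\kb[x]$-module, it suffices to prove that $e(\widetilde\nu)\tau_u g^\Lambda_{\rho,m}(x)\tau_v e(\nu)\in N$ for all $\rho\in I^\beta$, $1\leq m\leq n$, $u\in\SG(\rho,\widetilde\nu)$ and $v\in\SG(\nu,\rho)$; taking $m=1$ then gives $e(\widetilde\nu)I_{\Lambda,\beta}e(\nu)\subseteq N$. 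I would prove this by induction on $\ell(v)$: when $v=e$ (so $\rho=\nu$) it is the inclusion from (b), and the inductive step pushes the polynomial $g^\Lambda_{\rho,m}$ one transposition rightward past the leftmost factor of $\tau_v$. The three cases of the recursion for $g^\Lambda$ are precisely the three outcomes of such a move — commuting past $\tau_k$ with equal adjacent residues yields $\del_k$, i.e. the case $\nu_k=\nu_{k+1}$; commuting "downward" yields the case $\nu_k\succ\nu_{k+1}$; commuting "upward" and then absorbing $\tau_k^2=Q_{\nu_k,\nu_{k+1}}$ yields the case $\nu_k\prec\nu_{k+1}$ — while the Demazure and braid corrections either vanish via $\tau_k^2e(\rho)=0$, or strictly decrease $\ell(v)$, or increase $m$ by one, their trailing polynomial factors being moved to the far right and absorbed using the right $\kb[x]$-module structure of $N$.

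I expect part (c) — the straightening of $g^\Lambda_{\rho,m}$ past $\tau_v$ — to be the main obstacle: one must keep track of the residue sequence at each intermediate stage, verify that every term produced has one of the permitted shapes so that the induction on $(\ell(v),m)$ closes, and control the polynomial factors thrown off by $\tau_k^2=Q_{\nu_k,\nu_{k+1}}$ and by (\ref{eq:KK42}). What makes this manageable is precisely that $\widetilde\nu$ is the fully $\preceq$-sorted word, so that, as already used, $e(\widetilde\nu)\tau_w$ has no braid ambiguity. (One could alternatively obtain the reverse inclusion by matching $\dim_\kb e(\widetilde\nu)\RSS_\beta^\Lambda e(\nu)$ with the cardinality of the monomial basis above and using that $\RSS_\beta^\Lambda$ is of finite rank, but the purpose of this section is to avoid invoking anything beyond the defining relations.)
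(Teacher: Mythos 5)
Your parts (a) and (b) are essentially fine: (a) is the same induction the paper runs (your closed degree formula, read with unit rather than $+1$ leading coefficients and with your remark about the degenerate constant case, is consistent with Corollary \ref{maincor2}), and (b) is a correct, self-contained verification of the inclusion $\{\tau_wg^\Lambda_{\nu,k}e(\nu)\}\subseteq e(\widetilde{\nu})I_{\Lambda,\beta}e(\nu)$ that the paper never isolates, since there it falls out of an equality of generated modules. The problem is part (c), which is the substance of the theorem. A first, smaller issue: a spanning element of $e(\widetilde{\nu})I_{\Lambda,\beta}e(\nu)$ has the form $e(\widetilde{\nu})\tau_u x^{\un{a}}a^\Lambda_{\rho_1}(x_1)\tau_v x^{\un{b}}e(\nu)$; the factor $x^{\un{b}}$ is absorbed by the right $\kb[x_1,\ldots,x_n]$-structure of $N$, but $x^{\un{a}}$ sits between $\tau_u$ and $g^\Lambda_{\rho,1}$ and is not disposed of by saying ``$N$ is a right $\kb[x]$-module''; it must itself be commuted past $\tau_v$, i.e.\ your reduction to elements $e(\widetilde{\nu})\tau_u g^\Lambda_{\rho,m}\tau_v e(\nu)$ already uses the machinery you are trying to build.

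The serious gap is that the induction on $(\ell(v),m)$ does not close as stated. The recursion producing $g^\Lambda_{\cdot,m+1}$ involves specifically $s_m$, $\del_m$ and $Q_{\rho_m,\rho_{m+1}}(x_m,x_{m+1})$, but the leftmost letter of a reduced expression of $v$ is some $s_j$ with no reason to equal $s_m$. If $j<m$ and the adjacent entries of the relevant idempotent agree, crossing $\tau_j$ throws off $\del_j(g^\Lambda_{\rho,m})$, which is not a member of the family $\{g^\Lambda_{\rho',m'}\}$; if $j>m$ the polynomial passes freely, but merging the freed $\tau_j$ into $\tau_u$ in the case $\ell(us_j)<\ell(u)$ (which does occur, since $\ell(uv)=\ell(u)+\ell(v)$ is not guaranteed) creates a factor $Q_{\cdot,\cdot}(x_j,x_{j+1})$ multiplying $g^\Lambda_{\rho,m}$ in the middle; and rewriting $v$ so as to begin with a chosen letter costs braid-error terms $\tau_{v''}p(x)$ with yet another middle polynomial. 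None of these outputs is a right $\kb[x]$-multiple of an element $e(\widetilde{\nu})\tau_{u'}g^\Lambda_{\rho',m'}\tau_{v'}e(\nu)$, so the inductive hypothesis does not apply to them; ``moving the trailing polynomial factors to the far right'' is exactly the unproved step, because each such move generates further corrections of the same kind. This is precisely the difficulty the paper's proof is engineered to avoid: it first factors $v=w\cdot s_1s_2\cdots s_t$ with $w\in\SG_{\{2,3,\cdots,n\}}$, commutes $a^\Lambda(x_1)$ past $\tau_w$ (only $\tau_j$ with $j\geqslant 2$ occur) and re-expands, so that every generator has the special shape $\tau_z\,g^\Lambda_{\mu,t}\,\tau_t\tau_{t+1}\cdots\tau_{k-1}e(\nu)$ with $\mu=(s_ts_{t+1}\cdots s_{k-1}).\nu$ as in \eqref{eq:IntialGene}; then the polynomial of index $t$ only ever crosses $\tau_t$, so the three recursion cases really are the only moves, and the one remaining correction, coming from the factor $(x_1-x_2)$ in \eqref{eq:MulTerms}, is brought back into the family via $g_{\nu^{l,k},l}=g_{\nu^{l,j},l}$ for $\nu_j=\nu_k$ and the commutation \eqref{eq:InduSum}, after which a unitriangular change-of-generators argument finishes the step. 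Some reduction of this kind (or a genuinely strengthened inductive statement allowing an arbitrary polynomial alongside $g^\Lambda_{\rho,m}$, together with a proof that such elements lie in $N$) is the missing idea; without it your part (c) is a plausible plan but not a proof.
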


\begin{proof} Since $\Lambda$ is fixed, we shall write $g_{\nu,k} = g^\Lambda_{\nu,k}$ for simplicity.
By an induction on $k$ we can show that $g_{\nu,k} \in \kb [x_1,\ldots , x_k]$ and the leading term of its $x_k$-expansion is monic.

We have the following compatible decompositions:
  \begin{equation*}
    \begin{aligned}
      & \RSS_\beta = \bigoplus_{\mu,\nu\in I^\beta} e(\mu)\RSS_\beta e(\nu) = \bigoplus_{w\in \SG_n,\nu\in I^\beta} \tau_w\kb [x_1,\ldots ,x_n]e(\nu), \\
      & I_{\Lambda,\beta} = \bigoplus_{\mu,\nu\in I^\beta} e(\mu)I_{\Lambda,\beta} e(\nu), \\
      & \RSS_\beta^\Lambda = \bigoplus_{\mu,\nu\in I^\beta} e(\mu)\RSS_\beta^\Lambda e(\nu).
    \end{aligned}
  \end{equation*}
  In particular, $e(\mu)\RSS_\beta^\Lambda e(\nu) \cong e(\mu)\RSS_\beta e(\nu) / e(\mu)I_{\Lambda,\beta} e(\nu)$.
  Each direct summand in these decompositions is a right $\kb [x_1,\ldots ,x_n]$-module.

  By Definition~\ref{cyclotomicKLR} and Proposition~\ref{prop:MBKLR}, $I_{\Lambda ,\beta}$ is generated as a (right) $\kb [x_1,\ldots , x_n]$-module by elements of form $\tau_u a^\Lambda_{\nu_{v^{-1}(1)}}(x_1)\tau_v e(\nu)$, with $\nu$ running through $I^\beta$.
  We can decompose $v$ as $v = w \cdot s_1s_2\cdots s_t$ with $w \in \SG_{\{2,3,\cdots,n\}}$ and $0\leqslant t \leqslant n-1$. We may assume that the preferred decomposition we choose for $v$ is a product of that of $w$ with $s_1s_2\cdots s_{t}$. Then
  $$\tau_u a^\Lambda_{\nu_{v^{-1}(1)}}(x_1)\tau_v e(\nu) = \tau_{u}\tau_w a^\Lambda_{\nu_{t+1}}(x_1)\tau_1\cdots \tau_t e(\nu).$$
 By Proposition~\ref{prop:MBKLR} and the defining relation of $\RSS_\beta$, this can written as a (right) $\kb [x_1,\ldots ,x_n]$-linear combination of some
 elements of the form:
  $$\tau_{z}a^\Lambda_{\nu_k}(x_1)\tau_1 \tau_2 \cdots \tau_k e(\nu), \quad z\in \SG_n, \quad 0\leqslant k \leqslant n-1.$$
For each $1\leqslant l \leqslant k$, we define $\nu^{l,k} := (s_ls_{l+1}\cdots s_{k-1}).\nu$. In particular, $\nu^{1,1}=\nu$ by convention. Then, as a right $\kb [x_1,\ldots , x_n]$-module, $e(\mu) I_{\Lambda,\beta}e(\nu)$ is generated by
  \begin{equation}\label{eq:IntialGene}
    \Bigl\{ \tau_w g_{\nu^{1,k},1}\tau_1\tau_2\cdots \tau_{k-1}e(\nu) \Bigm| 1\leqslant k \leqslant n, \, w\in \SG (\nu^{1,k},\mu) \Bigr\}.
  \end{equation}

  Now, we concentrate on the case where $\mu = \widetilde{\nu}$.
  For $k > 1$, if $\nu_k \prec \nu_1$, equivalently, $\nu^{2,k}_2 \prec \nu_1^{2,k}$, then $\ell (ws_1) = \ell (w) +1$, and
  \begin{equation}\label{eq:lengp1}
    \tau_w g_{\nu^{1,k},1}\tau_1\tau_2\cdots \tau_{k-1}e(\nu) = \tau_w\tau_1 s_1(g_{\nu^{1,k},1})\tau_2\cdots \tau_{k-1}e(\nu) = \tau_{ws_1}g_{\nu^{2,k},2}\tau_2\cdots \tau_{k-1}e(\nu);
  \end{equation}

  If $\nu_k\succ \nu_1$, equivalently, $\nu_{2}^{2,k}\succ \nu_1^{2,k}$, then $\ell (ws_1) = \ell (w) -1$, and
  \begin{equation}\label{eq:lengm1}
    \tau_w g_{\nu^{1,k},1}\tau_1\tau_2\cdots \tau_{k-1}e(\nu) = \tau_{ws_1}\tau_1^2 s_1(g_{\nu^{1,k},1})\tau_2\cdots \tau_{k-1}e(\nu) = \tau_{ws_1}g_{\nu^{2,k},2}\tau_2\cdots \tau_{k-1}e(\nu);
  \end{equation}

  If $\nu_k = \nu_1$, equivalently, $\nu_2^{2,k} = \nu_1^{2,k}$, then $\nu^{2,k} = \nu^{1,k}$. In this case, for $w$ satisfying $\ell (ws_1) = \ell (w) - 1$, we have
  \begin{equation}\label{eq:SingTerm}
    \tau_w g_{\nu^{1,k},1}\tau_1\tau_2\cdots \tau_{k-1}e(\nu) = \tau_w \del_1 (g_{\nu^{1,k},1})\tau_2\cdots \tau_{k-1}e(\nu) = \tau_{w}g_{\nu^{2,k},2}\tau_2\cdots \tau_{k-1}e(\nu),
  \end{equation}
  where the first equality follows from \eqref{eq:TwoSideDem}; while for $w$ satisfying $\ell (ws_1) = \ell (w) + 1$, we have
  \begin{equation}\label{eq:MulTerms}
  \begin{aligned}
    \tau_w g_{\nu^{1,k},1}\tau_1\tau_2\cdots \tau_{k-1}e(\nu) & = \, \tau_w\del_1(g_{\nu^{1,k},1})\tau_2\cdots \tau_{k-1}e(\nu) + \tau_{w}\tau_1g_{\nu^{1,k},1}\tau_2\cdots \tau_{k-1}e(\nu) \\
    & \quad + \tau_{w}\tau_1\del_1(g_{\nu^{1,k},1})(x_1-x_2)\tau_2\cdots \tau_{k-1}e(\nu) \\
    & = \, \tau_wg_{\nu^{2,k},2}\tau_2\cdots \tau_{k-1}e(\nu) + \tau_{ws_1}\tau_2\cdots \tau_{k-1}g_{\nu^{1,k},1}e(\nu) \\
    & \quad + \tau_{ws_1}g_{\nu^{2,k},2}(x_1-x_2)\tau_2\cdots \tau_{k-1}e(\nu),
  \end{aligned}
  \end{equation}
  where the first equality follows from \eqref{eq:KK42} and \eqref{eq:TwoSideDem}.
  By the defining relations of $\RSS_\beta$, we have
  $$(x_1 - x_2)\tau_2\tau_3\cdots \tau_{k-1}e(\nu) = \tau_2\tau_3\cdots \tau_{k-1}e(\nu)(x_1 - x_k) + \sum_{\substack{2\leqslant j < k\\ \nu_j = \nu_k}} \tau_2\tau_3 \cdots \widehat{\tau_j} \cdots \tau_{k-1}e(\nu),$$
  where $\widehat{\tau_j}$ means deleting $\tau_j$.
  Note that by definition and induction on $l$, we have
  $$g_{\nu^{l,k},l} = g_{\nu^{l,j},l} \qquad \text{ for } j\leqslant k \text{ such that } \nu_j = \nu_k ,$$
  since $\nu^{l,k}_t = \nu^{l,j}_t$ for $1\leqslant t \leqslant l$.
  So
  \begin{equation}\label{eq:InduSum}
    \begin{aligned}
      \tau_{ws_1}g_{\nu^{2,k},2}(x_1-x_2)\tau_2\cdots \tau_{k-1}e(\nu) & = \, \tau_{ws_1}g_{\nu^{2,k},2}\tau_2\cdots \tau_{k-1}e(\nu)(x_1-x_k) \\
      & \quad + \sum_{\substack{2\leqslant j < k\\ \nu_j = \nu_k}} \tau_{ws_1}\tau_{j+1}\cdots \tau_{k-1} g_{\nu^{2,j},2}\tau_2\tau_3\cdots \tau_{j-1}e(\nu) \\
      & = \, \textrm{I} + \textrm{II}.
    \end{aligned}
  \end{equation}

  Writing $\tau_{ws_1}\tau_{j+1}\cdots \tau_{k-1}e(\nu^{2,j})$ in terms of monomial basis, and argue by induction on $j$, we see that $\textrm{II}$ is a right $\kb [x_1,\ldots ,x_n]$ combination of $\tau_{u}g_{\nu^{2,t}}\tau_2\tau_3 \cdots \tau_{t -1}e(\nu)$ with $t < k$, $\nu_t = \nu_k$ and $u \in \SG (\nu^{2,k},\widetilde{\nu})$.

  Consider the following set
  $$
  \begin{aligned}
    & \bigl\{ \tau_w g_{\nu, k}e(\nu) \bigm| 1\leqslant k\leqslant 2, \ w\in \SG (\nu,\widetilde{\nu}) \bigr\} \\
    & \qquad \qquad \bigsqcup \bigl\{ \tau_wg_{\nu^{2, k}, 2}\tau_{2}\cdots \tau_{k-1}e(\nu)\bigm| 2<k\leqslant n, \ w\in \SG (\nu^{2, k},\widetilde{\nu}) \bigr\}.
  \end{aligned}
  $$
  Equations \eqref{eq:lengp1}, \eqref{eq:lengm1}, \eqref{eq:SingTerm}, \eqref{eq:MulTerms} and \eqref{eq:InduSum} show that the right $\kb [x_1,\ldots ,x_n]$-module generated by the above subset is the same as the right $\kb [x_1,\ldots ,x_n]$-module generated by the set \eqref{eq:IntialGene}, and the transition matrix between these two sets of $\kb [x_1,\ldots ,x_n]$-generators is upper-unitriangular (under suitable ordering).

  By induction on $1\leqslant t \leqslant n$ and repeating this argument, one can show that as a right $\kb [x_1,\ldots ,x_n]$-module, $e(\widetilde{\nu})I_{\Lambda,\beta} e(\nu)$ is generated by
  $$
  \begin{aligned}
    & \bigl\{ \tau_w g_{\nu, k}e(\nu) \bigm| 1\leqslant k\leqslant t, \ w\in \SG (\nu,\widetilde{\nu}) \bigr\} \\
    & \qquad \qquad \bigsqcup \bigl\{ \tau_wg_{\nu^{t, k}, t}\tau_{t}\cdots \tau_{k-1}e(\nu)\bigm| t<k\leqslant n, \ w\in \SG (\nu^{t, k},\widetilde{\nu}) \bigr\}.
  \end{aligned}
  $$
  Taking $t = n$, we prove the theorem.
\end{proof}

As a result, we obtain the following result which should be equivalent to \cite[Theorem 1.5]{HS}.

\begin{cor}\label{maincor2} For each $1\leq k\leq n$, let $N_k$ denote the degree of the leading term of the $x_k$-expansion of $g_{\nu ,k}^\Lambda$. The (canonical image of the) set
  $$\{  \tau_wx^{\un{a}}e(\nu) \mid w\in \SG (\nu,\widetilde{\nu}),\, \un{a}\in\NM^n, \, a_k < N_k, \, \forall\, 1\leqslant k \leqslant n \}$$
  form a $\kb$-basis of $e(\widetilde{\nu})\RSS_\beta^\Lambda e(\nu)$.
\end{cor}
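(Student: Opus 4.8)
The plan is to realize $e(\widetilde{\nu})\RSS_\beta^{\Lambda}e(\nu)$ as a direct sum, indexed by $w\in\SG(\nu,\widetilde{\nu})$, of copies of the quotient ring $\kb[x_1,\ldots,x_n]/(g_{\nu,1}^{\Lambda},\ldots,g_{\nu,n}^{\Lambda})$, and then to read the claimed basis off Lemma~\ref{lem:keylem}(2). First I would fix the module-theoretic picture: fix $\nu\in I^\beta$ and, for each $w\in\SG(\nu,\widetilde{\nu})$, its preferred reduced expression, so that $\tau_wx^{\un{a}}e(\nu)\in S_\beta$ for all $\un{a}\in\NM^n$. Since polynomial coefficients are central and $x_je(\nu)=e(\nu)x_j$, for $h=\sum_{\un{a}}h_{\un{a}}x^{\un{a}}\in\kb[x_1,\ldots,x_n]$ we get $\tau_whe(\nu)=\sum_{\un{a}}h_{\un{a}}\,\tau_wx^{\un{a}}e(\nu)$; hence $h\mapsto\tau_whe(\nu)$ is an isomorphism of right $\kb[x_1,\ldots,x_n]$-modules from $\kb[x_1,\ldots,x_n]$ onto the free $\kb$-submodule $\tau_w\kb[x_1,\ldots,x_n]e(\nu)$ with $\kb$-basis $\{\tau_wx^{\un{a}}e(\nu)\}_{\un{a}}$. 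Using Proposition~\ref{prop:MBKLR} and $e(\widetilde{\nu})\tau_wx^{\un{a}}e(\nu)=\delta_{w.\nu,\widetilde{\nu}}\,\tau_wx^{\un{a}}e(\nu)$, the submodules $\tau_w\kb[x_1,\ldots,x_n]e(\nu)$ for distinct $w$ involve disjoint sets of monomial basis elements and together span the whole bi-weight space, so $e(\widetilde{\nu})\RSS_\beta e(\nu)=\bigoplus_{w\in\SG(\nu,\widetilde{\nu})}\tau_w\kb[x_1,\ldots,x_n]e(\nu)$.

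Next I would transport the defining ideal into this picture. By Theorem~\ref{thm:MBBWS}, $e(\widetilde{\nu})I_{\Lambda,\beta}e(\nu)$ is the right $\kb[x_1,\ldots,x_n]$-submodule generated by $\{\tau_wg_{\nu,k}^{\Lambda}e(\nu)\mid 1\le k\le n,\ w\in\SG(\nu,\widetilde{\nu})\}$. The key observation is that each generator has the special shape $\tau_w\cdot(\text{polynomial})\cdot e(\nu)$, and, since $e(\nu)$ commutes with polynomials, so does every element obtained from it by right multiplication by a polynomial; therefore the submodule generated by the $\tau_wg_{\nu,k}^{\Lambda}e(\nu)$ with a fixed $w$ equals $\tau_w\,(g_{\nu,1}^{\Lambda},\ldots,g_{\nu,n}^{\Lambda})\,e(\nu)$, which under the isomorphism above is the image of the ideal $(g_{\nu,1}^{\Lambda},\ldots,g_{\nu,n}^{\Lambda})$. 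As these submodules are contained in the respective direct summands $\tau_w\kb[x_1,\ldots,x_n]e(\nu)$, we obtain
$$e(\widetilde{\nu})I_{\Lambda,\beta}e(\nu)=\bigoplus_{w\in\SG(\nu,\widetilde{\nu})}\tau_w\,(g_{\nu,1}^{\Lambda},\ldots,g_{\nu,n}^{\Lambda})\,e(\nu),$$
and, quotienting summand by summand,
$$e(\widetilde{\nu})\RSS_\beta^{\Lambda}e(\nu)=\frac{e(\widetilde{\nu})\RSS_\beta e(\nu)}{e(\widetilde{\nu})I_{\Lambda,\beta}e(\nu)}\cong\bigoplus_{w\in\SG(\nu,\widetilde{\nu})}\kb[x_1,\ldots,x_n]\big/(g_{\nu,1}^{\Lambda},\ldots,g_{\nu,n}^{\Lambda}),$$
the isomorphism carrying the class of $\tau_wx^{\un{a}}e(\nu)$ to the class of $x^{\un{a}}$ in the $w$-th summand.

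It then remains to apply Lemma~\ref{lem:keylem}(2). Theorem~\ref{thm:MBBWS} already records that $g_{\nu,t}^{\Lambda}\in\kb[x_1,\ldots,x_t]$ and is monic in $x_t$ with $x_t$-leading term of degree $N_t$, so Lemma~\ref{lem:keylem}(2) with $R=\kb$, $g_t=g_{\nu,t}^{\Lambda}$ and $a_t=N_t$ gives that $\{x^{\un{b}}\mid b_t<N_t,\ 1\le t\le n\}$ is a $\kb$-basis of $\kb[x_1,\ldots,x_n]/(g_{\nu,1}^{\Lambda},\ldots,g_{\nu,n}^{\Lambda})$. Transporting this basis of each summand back through the displayed isomorphism, the canonical image of $\{\tau_wx^{\un{a}}e(\nu)\mid w\in\SG(\nu,\widetilde{\nu}),\ \un{a}\in\NM^n,\ a_k<N_k\ \forall\,1\le k\le n\}$ is a $\kb$-basis of $e(\widetilde{\nu})\RSS_\beta^{\Lambda}e(\nu)$, which is precisely the assertion.

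I do not expect a real obstacle here, since Theorem~\ref{thm:MBBWS} carries the substance; the remaining points are purely organizational. The main one is the observation isolated in the second paragraph, that every ideal generator furnished by Theorem~\ref{thm:MBBWS} is of the form $\tau_w\cdot(\text{polynomial})\cdot e(\nu)$, so that right multiplication by polynomials never couples different $\tau_w$'s; this is exactly what makes the computation ``block diagonal'' in $w$ and lets the commutative-algebra input of Lemma~\ref{lem:keylem}(2) finish everything. One should also verify that the directness of $\bigoplus_w$ persists through passage to the ideal and then to the quotient, but that is immediate from the homogeneous monomial basis of $\RSS_\beta$.
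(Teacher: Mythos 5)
Your proposal is correct and follows essentially the same route as the paper, whose proof is exactly the one-line reduction you spell out: apply Theorem~\ref{thm:MBBWS} and Lemma~\ref{lem:keylem}(2) after replacing $\kb[x_1,\ldots,x_n]$ by the rank-one free summands $\tau_w\kb[x_1,\ldots,x_n]e(\nu)$ and $g_k$ by $g_{\nu,k}^{\Lambda}$. Your extra details (the $w$-blockwise decomposition of the bi-weight space and the observation that right multiplication by polynomials never mixes different $\tau_w$'s) are precisely the organizational points the paper leaves implicit, and they are carried out correctly.
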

\begin{proof} This follows from Theorem~\ref{thm:MBBWS} and Lemma~\ref{lem:keylem} by replacing $\kb [x_1,\ldots ,x_n]$ with $\tau_w\kb [x_1,\ldots ,x_n]e(\nu)$ and $g_k$ with $g_{\nu,k}$.
\end{proof}

We also obtain the following result, where its first part is equivalent to \cite[Theorem 1.9]{HS2} in the case of cyclotomic KLR algebras.

\begin{cor}\label{cor:MBDCR}
  Assume that
  \begin{equation}\label{eq:DR}
    \beta = \alpha_1 + \cdots + \alpha_n, \qquad \alpha_i \neq \alpha_j,\, \forall\, 1\leqslant i\neq j \leqslant n
  \end{equation}
  Then $\RSS_\beta^\Lambda$ has a $\kb$-basis
  $$\Bigl\{ \tau_wx^{\un{a}}e(\nu) \Bigm| \nu\in I^\beta,\, w\in \SG_n ,\, \un{a}\in\NM^n,\, a_t < \la \alpha_{\nu_t}^{\vee} , \Lambda \ra - \sum_{\substack{1 \leqslant k < t \\ w(k) < w(t)}}a_{\nu_t,\nu_k} \, \forall\, 1\leqslant t \leqslant n \Bigr\}.$$
  The two-sided ideal $I_{\Lambda ,\beta}$ has a $\kb$-basis
  $$\Biggl\{ \tau_wx^{\un{a}}g^\Lambda_{w,\nu ,k}e(\nu) \Biggm| \begin{array}{c} \nu\in I^\beta,\, w\in \SG_n ,\, \un{a}\in\NM^n \\ a_t < \la \alpha_{\nu_t}^{\vee} , \Lambda \ra - \operatornamewithlimits{\sum}\limits_{\substack{1 \leqslant k < t \\ w(k) < w(t)}}a_{\nu_t,\nu_k} \, \forall\, k< t \leqslant n \end{array} \Biggr\}.$$
  where $g^\Lambda_{w,\nu ,k}$ is the same as $g^\Lambda_{\nu ,k}$ in Theorem~\ref{thm:MBBWS} by setting $\widetilde{\nu} = w.\nu$.
\end{cor}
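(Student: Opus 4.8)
The plan is to establish the statement one bi-weight space $e(\widetilde\nu)(-)e(\nu)$ at a time, reducing each such space to Theorem~\ref{thm:MBBWS} and Lemma~\ref{lem:keylem}, and then to make the degree bound of Corollary~\ref{maincor2} explicit.

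\emph{Step 1 (multiplicity-free reduction).} When $\beta$ is as in \eqref{eq:DR}, every $\nu\in I^\beta$ has pairwise distinct entries, so its $\mathfrak{S}_n$-stabiliser is trivial and $I^\beta$ is a single free $\mathfrak{S}_n$-orbit. Thus for any $\mu,\nu\in I^\beta$ there is a unique $w\in\mathfrak{S}_n$ with $w.\nu=\mu$, and Proposition~\ref{prop:MBKLR} gives $e(\mu)\RSS_\beta e(\nu)=\tau_w\kb[x_1,\ldots,x_n]e(\nu)$, a free right $\kb[x_1,\ldots,x_n]$-module of rank one, with $\tau_w p\,e(\nu)\mapsto p$ an isomorphism of right $\kb[x_1,\ldots,x_n]$-modules onto $\kb[x_1,\ldots,x_n]$. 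Since all $k_j=1$ in Definition~\ref{specialnu}, any such $\mu$ may serve as the ``sorted'' target $\widetilde\nu$, once the support of $\beta$ is ordered by position in $\mu$. I also use the compatible decompositions $\RSS_\beta^\Lambda=\bigoplus_{\mu,\nu}e(\mu)\RSS_\beta^\Lambda e(\nu)$, $I_{\Lambda,\beta}=\bigoplus_{\mu,\nu}e(\mu)I_{\Lambda,\beta}e(\nu)$ and $e(\mu)\RSS_\beta^\Lambda e(\nu)\cong e(\mu)\RSS_\beta e(\nu)/e(\mu)I_{\Lambda,\beta}e(\nu)$.

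\emph{Step 2 (transporting Theorem~\ref{thm:MBBWS}).} Fixing $\nu\in I^\beta$ and $w\in\mathfrak{S}_n$, put $\widetilde\nu=w.\nu$; then $\mathfrak{S}(\nu,\widetilde\nu)=\{w\}$, so Theorem~\ref{thm:MBBWS} identifies $e(\widetilde\nu)I_{\Lambda,\beta}e(\nu)$, inside $e(\widetilde\nu)\RSS_\beta e(\nu)\cong\kb[x_1,\ldots,x_n]$, with the ideal $(g^\Lambda_{\nu,1},\ldots,g^\Lambda_{\nu,n})$, each $g^\Lambda_{\nu,t}$ lying in $\kb[x_1,\ldots,x_t]$ and monic in $x_t$, of degree $N_t$ say. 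Lemma~\ref{lem:keylem}(2) with $R=\kb$, $g_t=g^\Lambda_{\nu,t}$, $a_t=N_t$ then yields a $\kb$-basis $\{x^{\un{b}}:b_t<N_t\ \forall\,t\}$ of the quotient and a $\kb$-basis $\{x^{\un{b}}g^\Lambda_{\nu,k}:1\le k\le n,\ b_t<N_t\ \text{for}\ k<t\le n\}$ of the ideal. Transporting these back through the isomorphism, $e(\widetilde\nu)\RSS_\beta^\Lambda e(\nu)$ has $\kb$-basis $\{\tau_w x^{\un{b}}e(\nu):b_t<N_t\}$ and $e(\widetilde\nu)I_{\Lambda,\beta}e(\nu)$ has $\kb$-basis $\{\tau_w x^{\un{b}}g^\Lambda_{w,\nu,k}e(\nu):1\le k\le n,\ b_t<N_t\ \text{for}\ k<t\le n\}$. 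Taking the union over $\nu\in I^\beta$ and $w\in\mathfrak{S}_n$ (equivalently, over all bi-weight pairs) produces $\kb$-bases of $\RSS_\beta^\Lambda$ and of $I_{\Lambda,\beta}$; these coincide with the displayed sets once $N_t$ is computed.

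\emph{Step 3 (computing $N_t$), and the main obstacle.} It remains to check $N_t=\langle\alpha_{\nu_t}^\vee,\Lambda\rangle-\sum_{1\le k<t,\ w(k)<w(t)}a_{\nu_t,\nu_k}$, which I would do by induction on $t$, simultaneously over all $\nu$. For $t=1$, $g^\Lambda_{\nu,1}=a^\Lambda_{\nu_1}(x_1)$ is monic of degree $\langle\alpha_{\nu_1}^\vee,\Lambda\rangle$ and the sum is empty. For the inductive step, multiplicity-freeness leaves only the cases $\nu_k\succ\nu_{k+1}$ and $\nu_k\prec\nu_{k+1}$ in the recursion of Theorem~\ref{thm:MBBWS}; since $s_k$ replaces $x_k$ by $x_{k+1}$ without changing the degree, $N_{k+1}=\deg_{x_k}g^\Lambda_{s_k.\nu,k}$ in the first case and $N_{k+1}=\deg_{x_k}g^\Lambda_{s_k.\nu,k}+\deg_{x_{k+1}}Q_{\nu_k,\nu_{k+1}}(x_k,x_{k+1})$ in the second. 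The grading requirement $c_{i,j,p,q}\in\kb_{-2(\alpha_i,\alpha_j)-(\alpha_i,\alpha_i)p-(\alpha_j,\alpha_j)q}$ together with $c_{j,i,-a_{ji},0}\in\kb_0^\times$ forces $\deg_v Q_{i,j}(u,v)=-a_{ji}$, so the extra contribution is $-a_{\nu_{k+1},\nu_k}$; and as $\widetilde\nu=w.\nu$ places $\nu_j$ in position $w(j)$ of $\widetilde\nu$, the relation $\nu_k\prec\nu_t$ is exactly $w(k)<w(t)$, after which feeding $(s_k.\nu)_j=\nu_j$ for $j<k$ and $(s_k.\nu)_k=\nu_{k+1}$ into the induction hypothesis gives the asserted formula. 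Steps~1 and~2 are formal once Theorem~\ref{thm:MBBWS} is in hand; the part that needs real care is this degree bookkeeping — confirming that the repeated $s_k$-twists reorganise the bound into precisely $\sum_{1\le k<t,\ w(k)<w(t)}a_{\nu_t,\nu_k}$, and in particular that it is $\deg_{x_{k+1}}Q_{\nu_k,\nu_{k+1}}(x_k,x_{k+1})=-a_{\nu_{k+1},\nu_k}$, not $-a_{\nu_k,\nu_{k+1}}$, that enters, reflecting the asymmetry of $A$.
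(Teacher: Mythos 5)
Your proposal is correct and follows essentially the route the paper itself intends: the corollary is obtained, bi-weight space by bi-weight space, from Theorem~\ref{thm:MBBWS} together with Lemma~\ref{lem:keylem} exactly as in the proof of Corollary~\ref{maincor2}, using that under \eqref{eq:DR} every $\nu\in I^\beta$ has trivial stabiliser so each $e(w.\nu)\RSS_\beta e(\nu)=\tau_w\kb[x_1,\ldots,x_n]e(\nu)$ is free of rank one and any $\mu\in I^\beta$ may play the role of $\widetilde{\nu}$. Your Step~3 correctly supplies the degree bookkeeping the paper leaves implicit, including the key point that the graded constraint on $c_{i,j,p,q}$ together with $c_{j,i,-a_{ji},0}\in\kb_0^{\times}$ forces $\deg_{x_{t}}Q_{\nu_k,\nu_t}(x_k,x_t)=-a_{\nu_t,\nu_k}$ (with unit leading coefficient), which yields exactly the bound $\la\alpha_{\nu_t}^{\vee},\Lambda\ra-\sum_{k<t,\,w(k)<w(t)}a_{\nu_t,\nu_k}$.
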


We also recover the following result of \cite[Theorem 2.34]{HuL} which gives a monomial basis for cyclotomic NilHecke algebras of type $A$.

\begin{cor}\text{\rm (\cite[Theorem 2.34]{HuL})}
  Let $\beta = n\alpha_i$ and set $l = \la \alpha_i^\vee , \Lambda \ra$. Then $\RSS_\beta^\Lambda$ has a basis
  $$\bigl\{ \tau_wx^{\un{a}}e(i^n) \bigm| w\in \SG_n , \, \un{a}\in\NM^n,\, a_t \leqslant l - t  \bigr\}.$$
\end{cor}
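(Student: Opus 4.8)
The plan is to derive this directly from Corollary~\ref{maincor2}. Here $\beta = n\alpha_i$, so $I^\beta$ consists of the single sequence $\nu = i^n$; in particular the sorted sequence $\widetilde{\nu}$ of Definition~\ref{specialnu} is forced to be $i^n$ as well, and $\SG(\nu,\widetilde{\nu}) = \SG(i^n,i^n) = \SG_n$. Thus Corollary~\ref{maincor2} already tells us that $\{\tau_w x^{\un{a}} e(i^n) \mid w\in\SG_n,\ \un{a}\in\NM^n,\ a_k < N_k\ \forall\, k\}$ is a $\kb$-basis of $\RSS_\beta^\Lambda$, where $N_k$ is the $x_k$-degree of the leading term of $g^\Lambda_{\nu,k}$. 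So the whole problem reduces to identifying the polynomials $g^\Lambda_{\nu,k}$ of Theorem~\ref{thm:MBBWS} and reading off the $N_k$.

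Since every pair of consecutive entries of $\nu = i^n$ is equal, only the first branch of the recursion in Theorem~\ref{thm:MBBWS} ever occurs: $g^\Lambda_{\nu,1} = a_i^\Lambda(x_1)$ and $g^\Lambda_{\nu,k+1} = \partial_k(g^\Lambda_{\nu,k})$ for $1\le k<n$. Now $a_i^\Lambda(x_1)$ is monic of degree $l = \la\alpha_i^\vee,\Lambda\ra$ in $x_1$, so $N_1 = l$. I would then show by induction on $k$ that $g^\Lambda_{\nu,k}$ has $x_k$-degree exactly $l-k+1$ with leading coefficient $(-1)^{k-1}$: indeed $\partial_k$ is $\kb[x_1,\ldots,x_{k-1}]$-linear, and $\partial_k(x_k^m) = -(x_k^{m-1}+x_k^{m-2}x_{k+1}+\cdots+x_{k+1}^{m-1})$ has $x_{k+1}$-degree $m-1$ with leading coefficient $-1$ (while $\partial_k(x_k^0)=0$), so applying $\partial_k$ to a polynomial of $x_k$-degree $d\ge 1$ with unit leading coefficient yields one of $x_{k+1}$-degree $d-1$ with unit leading coefficient. (If one normalizes $a_i^\Lambda(u)=u^l$ as in Section~\ref{sec:DC} this gives the closed form $g^\Lambda_{\nu,k} = (-1)^{k-1}h_{l-k+1}(x_1,\ldots,x_k)$ with $h_m$ the complete homogeneous symmetric polynomial, but only the degree count is needed.) Hence $N_k = l-k+1$ for all $k$ with $l-k+1\ge 0$.

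Substituting $N_k = l-k+1$ into Corollary~\ref{maincor2}, the constraint $a_k < N_k$ becomes $a_k \le l-k$, which is precisely the claimed indexing set, so the corollary is proved when $l\ge n$. When $l<n$ the statement still holds, for the trivial reason that both sides vanish: the purported basis has no element since $a_n\le l-n<0$ is impossible, while on the algebra side $g^\Lambda_{\nu,l+1}$ is a nonzero scalar (it has $x_{l+1}$-degree $0$) and $l+1\le n$, so $e(i^n)\in I_{\Lambda,\beta}$ and $\RSS_\beta^\Lambda = 0$. There is no real obstacle here; the one point that needs a little care is verifying that the leading coefficient of $g^\Lambda_{\nu,k}$ stays a unit as $\partial_k$ is iterated, since this is what pins down $N_k$ and makes Lemma~\ref{lem:keylem} — via Corollary~\ref{maincor2} — applicable.
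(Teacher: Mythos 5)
Your proposal is correct and follows exactly the route the paper intends: the corollary is stated as an immediate specialization of Theorem~\ref{thm:MBBWS} and Corollary~\ref{maincor2} to $\beta=n\alpha_i$ (where $I^\beta=\{i^n\}$ and $\SG(\nu,\widetilde{\nu})=\SG_n$), with the iterated Demazure computation giving $N_k=l-k+1$, hence $a_k\leqslant l-k$. Your extra care about the leading coefficient being the unit $(-1)^{k-1}$ (rather than literally $1$, given the paper's sign convention for $\del_k$) and your treatment of the degenerate case $l<n$ are sound refinements of the same argument, not a different approach.
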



\section{Monomial basis for concenters and the Center Conjecture} \label{sec:MBCocen}

Throughout this section, we shall assume that $\beta$ satisfies \eqref{eq:DR} and $\kb$ is a field. Under this assumption we shall construct a monomial basis for the cocenter $\RSS_\beta^\Lambda / [\RSS_\beta^\Lambda , \RSS_\beta^\Lambda]$ and use it to verify the Center Conjecture.

Fix $\gamma = (\gamma_1 , \gamma_2 ,\ldots ,\gamma_n) \in I^\beta$. By assumption \eqref{eq:DR} we can assume without loss of generality that $I = \{ \gamma_1 , \ldots , \gamma_n \}$. Then $\gamma$ induces a total order on $I$ such that the map
$\iota : \{ 1, \ldots , n \} \to I,\, t \mapsto \gamma_t$ is an order-preserving bijection. We use $\iota$ to identify these two sets.

For any $u,v\in \SG_n$, we define $u\prec v$ if and only if \begin{equation}\label{lex}
(u(1),u(2),\ldots ,u(n)) < (v(1),v(2),\ldots ,v(n)), \end{equation}
where ``$\prec$'' is the lexicographic order on $\mathbb{N}^n$. That says, (\ref{lex}) holds if and only if there exists $t \in [1,n]$, such that $u(t) < v(t)$ and $u(k) = v(k)$ whenever $1\leq k<t$.
We use the bijection $\SG_n \to I^\beta,\, u \mapsto u.\gamma$ to identify $\SG_n$ with $I^\beta$. Thus the total order ``$\prec$'' on $\SG_n$ induces a total order ``$\prec_\gamma$'' on $I^\beta$ and
$\gamma$ is minimal element under this order ``$\prec_\gamma$''.

Note that the induced total order ``$\prec_\gamma$'' on $I^\beta$ is in general not the lexicographical order on $I^\beta$. The induced total order ``$\prec_\gamma$'' on $I^\beta$  can be described as follows: $\mu\prec\nu$ if and only if there exists $t\in [1,n]$, such that if $\mu_p=\nu_q=\gamma_k$ for $1\leq k<t$, then $p=q$ and if $\mu_p=\nu_q=\gamma_t$, then $p<q$.
We call $\gamma$ the \emph{initial weight}. Since $\gamma$ is fixed in the whole section, we simply denote it by ``$\prec$'' instead of by ``$\prec_\gamma$''.
For any $\nu\in I^\beta$ and $e \neq w\in\SG_n$, $e(w.\nu)\RSS_\beta e(\nu)\subset [\RSS_\beta ,\RSS_\beta]$ since the idempotents $\{ e(\nu) \mid \nu \in I^\beta\}$ are mutually orthogonal.
Therefore, by Definition \ref{KLR} and Proposition \ref{prop:MBKLR}, $\RSS_\beta / [\RSS_\beta,\RSS_\beta]$ is isomorphic to $P_\beta / C_\beta$, where $P_\beta = \oplus_{\nu \in I^\beta}e(\nu)\RSS_\beta e(\nu)=\oplus_{\nu \in I^\beta}\kb[x_1,\cdots,x_n]e(\nu)$ is the polynomial subalgebra and $C_\beta = P_\beta \cap [\RSS_\beta,\RSS_\beta]$ is generated as a $\kb$-module by some elements of the form:
$$
\begin{aligned}
  & \, e(\nu)\tau_{u^{-1}}x^{\un{a}}\tau_ux^{\un{b}}e(\nu) - e(u.\nu)\tau_u x^{\un{b}}\tau_{u^{-1}}x^{\un{a}}e(u.\nu) \\
  = & \, \tau_{u^{-1}}\tau_ux^{u^{-1}.\un{a} +\un{b}}e(\nu) - \tau_u \tau_{u^{-1}}x^{\un{a}+u.\un{b}}e(u.\nu) \\
  = & \, Q_{u,\nu}x^{u^{-1}.\un{a} +\un{b}}e(\nu) - Q_{u^{-1},u.\nu}x^{\un{a}+u.\un{b}}e(u.\nu) \\
  = & \, Q_{u,\nu}x^{u^{-1}.\un{a} +\un{b}}e(\nu) - u.\bigl( Q_{u,\nu}x^{u^{-1}.\un{a} +\un{b}}e(\nu) \bigr),
\end{aligned}
$$
where \begin{equation}\label{qunu} Q_{u,\nu} := \operatornamewithlimits{\prod}\limits_{\substack{k<t \\ u(k) > u(t)}}Q_{\nu_k,\nu_t}(x_k , x_t).\end{equation}
By the symmetry, we can easily see that $C_\beta$ has a set of $\kb$-linear generators
\begin{equation*}
  G_1 := \bigl\{ Q_{u,\nu}x^{u^{-1}.\un{a} +\un{b}}e(\nu) - u.\bigl( Q_{u,\nu}x^{u^{-1}.\un{a} +\un{b}}e(\nu) \bigr) \bigm| \nu \in I^\beta , \, \nu \prec u.\nu , \un{a} \in \NM^n \bigr\}.
\end{equation*}
The same argument shows that $\RSS_\beta^\Lambda / [\RSS_\beta^\Lambda,\RSS_\beta^\Lambda] \cong P_\beta^\Lambda / C_\beta^\Lambda$, where $C_\beta^\Lambda$ is the image of $C_\beta$ in $\RSS_\beta^\Lambda$.
As $\RSS_\beta^\Lambda / [\RSS_\beta^\Lambda,\RSS_\beta^\Lambda] \cong \RSS_\beta / ([\RSS_\beta,\RSS_\beta]+I_{\Lambda,\beta})$, $\RSS_\beta^\Lambda / [\RSS_\beta^\Lambda,\RSS_\beta^\Lambda]$ is also isomorphic to
$P_\beta / D_\beta^\Lambda$, where $D_\beta^\Lambda = [\RSS_\beta,\RSS_\beta]+I_{\Lambda,\beta}$ is a $\kb$-submodule generated by
$$
 \begin{aligned}
   G'_1 = & \bigl\{ Q_{u,\nu}x^{\un{a}}e(\nu) - u.\bigl( Q_{u,\nu}x^{\un{a}}e(\nu) \bigr) \bigm| \nu\in I^\beta,\, \nu \prec u.\nu , \, \un{a} \in \NM^n \bigr\} \\
   & \qquad \bigsqcup \left\{ g_{e,\nu,k}^\Lambda x^{\un{b}}e(\nu) \Biggm| \begin{array}{c} \nu\in I^\beta,\, \un{b}\in\NM^n,  \,\text{and}\,\, \forall\, k< t \leqslant n, \\ b_t < \la \alpha_{\nu_t}^{\vee} , \Lambda \ra - \operatornamewithlimits{\sum}\limits_{1 \leqslant k < t }a_{\nu_t,\nu_k} \end{array} \right\},
 \end{aligned}
$$
where $g_{e,\nu,k}^\Lambda$ is defined in Corollary~\ref{cor:MBDCR} and $g_{e,\nu,k}^\Lambda = a^\Lambda_{\nu_t}(x_t)Q_{v,\nu}$ with $v = s_1s_2\cdots s_{t -1}$.

\begin{dfn}\label{indecomp0} Let $\nu \in I^\beta$ and $u \in \SG_n$ such that $\nu \prec u.\nu$. We say that $u$ is \emph{decomposable relative to} $\nu$, if $u = u_1u_2$ such that $\ell (u) = \ell (u_1) + \ell (u_2)$, $u_1 \neq e$ and $\nu \prec u_2.\nu$, and call $u = u_1u_2$ a decomposition of $u$ relative to $\nu$. Conversely, if there is no such decomposition, then we say that $u$ is
\emph{indecomposable relative to $\nu$}.
\end{dfn}
In particular, if  $u$ is indecomposable relative to $\nu$, then for any reduced decomposition $u = s_{i_k}s_{i_{k-1}}\cdots s_{i_1}$, we must have $\nu^{(t)} \prec \nu \prec \nu^{(k)}$ for any $1\leqslant t \leqslant k-1$, where  $\nu^{(t)} := s_{i_t}s_{i_{t-1}}\cdots s_{i_1}.\nu$, $t=1,2,\cdots,k$.

If $u$ is decomposable relative to $\nu$ and $u = u_1u_2$ is a decomposition relative to $\nu$, then
\begin{equation}\label{eq:redind}
\begin{aligned}
  & Q_{u,\nu}x^{\un{a}}e(\nu) - u.\bigl( Q_{u,\nu}x^{\un{a}}e(\nu) \bigr) \\
  = & \, Q_{u_2,\nu}u_2^{-1}.\bigl( Q_{u_1,u_2.\nu} \bigr)x^{\un{a}}e(\nu) - u_2.\left( Q_{u_2,\nu}u_2^{-1}.\bigl( Q_{u_1,u_2.\nu} \bigr)x^{\un{a}}e(\nu) \right) \\
  & \quad + Q_{u_1,u_2.\nu}u_2.\bigl( Q_{u_2,\nu}x^{\un{a}}\bigr)e(u_2.\nu) - u_1.\left( Q_{u_1,u_2.\nu}u_2.\bigl( Q_{u_2,\nu}x^{\un{a}}\bigr)e(u_2.\nu) \right).
\end{aligned}
\end{equation}
This yields that $Q_{u,\nu}x^{\un{a}}e(\nu) - u.\bigl( Q_{u,\nu}x^{\un{a}}e(\nu) \bigr)$ can be written down as a linear combination of $Q_{v,\nu}x^{\un{b}}e(\nu) - v.\bigl( Q_{v,\nu}x^{\un{b}}e(\nu) \bigr)$ with $v$ indecomposable relative to $\nu$. Therefore, $C_\beta$ has a set of $\kb$-generators
$$G_2 := \left\{ Q_{u,\nu}x^{\un{a}}e(\nu) - u.\bigl( Q_{u,\nu}x^{\un{a}}e(\nu) \bigr) \Biggm| \begin{array}{c} \nu\in I^\beta, \, u\in \SG_n\setminus\{e\},\, \un{a}\in \NM^n \\ \text{$u$ is indecomposable relative to $\nu$} \end{array}  \right\},$$
where we have used the condition that $Q_{i,j}(u,v)=Q_{j,i}(v,u)$, and $D_\beta^\Lambda$ is generated as $\kb$-module by
$$
 \begin{aligned}
   G'_2 = & \left\{ Q_{u,\nu}x^{\un{a}}e(\nu) - u.\bigl( Q_{u,\nu}x^{\un{a}}e(\nu) \bigr) \Biggm| \begin{array}{c} \nu\in I^\beta, \, u\in \SG_n\setminus\{e\},\, \un{a}\in \NM^n \\ \text{$u$ is indecomposable relative to $\nu$} \end{array} \right\} \\
   & \qquad \bigsqcup \left\{ g_{e,\nu,k}^\Lambda x^{\un{b}}e(\nu) \Biggm| \begin{array}{c} \nu\in I^\beta,\, \un{b}\in\NM^n, \,\text{and}\,\, \forall\, k< t \leqslant n, \\ b_t < \la \alpha_{\nu_t}^{\vee} , \Lambda \ra - \operatornamewithlimits{\sum}\limits_{1 \leqslant p < t}a_{\nu_t,\nu_p}\end{array} \right\}.
 \end{aligned}
$$
The following lemma describes all $u$ which is indecomposable relative to $\nu$.

\begin{lem}\label{indecomp1} Let $\mu, \nu \in I^\beta$ and $u \in \SG_n$ such that $\mu=u.\nu$. Then $u$ is indecomposable relative to $\nu$ if and only if $u = s_k s_{k+1}\cdots s_t$ such that $\nu_k <\nu_{t+1} < \nu_p$ for $k+1 \leqslant p \leqslant t$, that is, $k = \max \{ 1\leqslant s \leqslant t \mid \nu_s < \nu_{t+1} \}$.
  In particular, such $u$ is determined uniquely by $\nu$ and $t$, and thus, for any $1\leq t < n$, there exists at most one such $u$ that is indecomposable relative to $\nu$.
\end{lem}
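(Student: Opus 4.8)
The plan is to analyze directly what it means for $u$ to be indecomposable relative to $\nu$ in terms of a reduced word, using the remark following Definition~\ref{indecomp0} that if $u$ is indecomposable relative to $\nu$ and $u = s_{i_k}\cdots s_{i_1}$ is any reduced decomposition, then $\nu^{(t)} \prec \nu$ for all $1\leqslant t\leqslant k-1$ while $\nu\prec\nu^{(k)} = u.\nu$. First I would translate the order $\prec$ on $I^\beta$ into the concrete description recalled just above Definition~\ref{indecomp0}: $\mu\prec\nu$ iff, reading the positions of $\gamma_1,\gamma_2,\ldots$ in $\mu$ and $\nu$ in turn, the first place where they differ has $\mu$ placing the relevant letter further left. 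Under the identification $\SG_n\cong I^\beta$ via $w\mapsto w.\gamma$, this is just the lexicographic order on one-line notation, and I would phrase the whole argument on the $\SG_n$ side: $u$ is indecomposable relative to $v$ (where $v.\gamma=\nu$) iff for every way of writing $uv = s_{i_k}\cdots s_{i_1}v$ as a reduced product, all intermediate permutations $s_{i_t}\cdots s_{i_1}v$ are $\prec v$ in one-line notation, except the last one which is $\succ v$.

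The key combinatorial step is then: multiplying $v$ on the left by an adjacent transposition $s_j$ swaps the two entries $v^{-1}(j)$ and $v^{-1}(j+1)$ — wait, more precisely $s_j v$ in one-line notation is $v$ with the \emph{values} $j$ and $j+1$ interchanged, so it moves in one-line notation in a controlled way. I would track how the one-line word of the partial products evolves: starting from $v$, each left multiplication by a simple reflection changes the word in a single "descent/ascent" move, and the condition "stay below $v$ until the very last step, then jump above" forces the intermediate words to be obtained from $v$ by a sequence of moves that strictly decrease, followed by one move that overshoots. Unwinding this, I expect to find that the only reduced elements $u$ with this property are exactly the "cycles" $u = s_k s_{k+1}\cdots s_t$, because any such $u$ acts on $v.\gamma=\nu$ by cyclically shifting the block of entries in positions $k,k+1,\ldots,t+1$: it moves $\nu_{t+1}$ leftward past $\nu_{t},\ldots,\nu_{k}$ one at a time. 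For this to be a genuine decrease at every intermediate stage and an increase only at the end, one needs precisely that $\nu_{t+1}$ is larger than every $\nu_p$ with $k\leqslant p\leqslant t$ except $\nu_k$, and that $\nu_k<\nu_{t+1}$; i.e. $k=\max\{s\leqslant t\mid \nu_s<\nu_{t+1}\}$. Conversely I would check that for such $k,t$ the element $s_ks_{k+1}\cdots s_t$ is indeed indecomposable relative to $\nu$, which again follows by reading off the chain of one-line words and verifying the strict inequalities.

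For the uniqueness statement: once $t$ is fixed, the value $\nu_{t+1}$ is determined, hence $k=\max\{s\leqslant t\mid\nu_s<\nu_{t+1}\}$ is determined, so there is at most one indecomposable $u$ "ending at $t$"; and since an indecomposable $u=s_ks_{k+1}\cdots s_t$ of this cyclic shape has a well-defined top index $t$, different $t$'s give different $u$'s, giving at most one such $u$ for each $1\leqslant t<n$ and hence finitely many in total. The main obstacle I anticipate is not any single computation but rather setting up the bookkeeping cleanly: the order $\prec$ is lexicographic on $\SG_n$ but \emph{not} lexicographic on $I^\beta$, so I must be careful to argue consistently on one side, and I must correctly characterize which single-reflection moves decrease versus increase the one-line word — this is where an off-by-one in the comparison of $\nu_k$, $\nu_{t+1}$, and the $\nu_p$ could creep in. I would handle this by first proving a clean lemma: for $v\in\SG_n$ and a simple reflection $s_j$, $s_jv\prec v$ iff $v^{-1}(j)>v^{-1}(j+1)$ (equivalently $j$ appears to the right of $j+1$ in the one-line word of $v$), and then apply it inductively along the reduced word $s_ts_{t-1}\cdots s_k$ (or its reverse) acting on $\nu$.
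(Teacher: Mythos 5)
Your setup is sound: translating $\prec_\gamma$ to the lexicographic order on one-line notation, the auxiliary observation that $s_jv\prec v$ iff $j$ sits to the right of $j+1$ in the word of $v$, and the resulting verification that $u=s_ks_{k+1}\cdots s_t$ with $k=\max\{s\leqslant t\mid \nu_s<\nu_{t+1}\}$ is indecomposable relative to $\nu$ (this is the direction the paper dismisses as obvious), as well as the uniqueness statement, are all correct. But the substantive content of the lemma is the \emph{necessity}: that an indecomposable $u$ can have no other shape. On this point your proposal offers only the sentence ``unwinding this, I expect to find that the only reduced elements $u$ with this property are exactly the cycles,'' followed by an explanation of why the cycles \emph{do} work — which is the sufficiency again, not an argument excluding every other $u$. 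Nothing in the sketch rules out, say, elements with two right descents, elements whose support is an interval traversed more than once, or products of disjoint cycles; one must show each of these admits a length-additive factorization $u=u_1u_2$ with $u_1\neq e$ and $\nu\prec u_2.\nu$, and this requires exploiting the quantifier ``for \emph{every} reduced expression / every right factor,'' since decomposability only needs a single witnessing factorization. The paper does exactly this by induction on $n$, splitting according to the first position $t$ where $v$ and $w=uv$ differ, and using decompositions of $u$ with respect to parabolic subgroups $\SG_{[p+1,n]}$ and minimal-length coset representatives to either produce an explicit decomposition of $u$ relative to $\nu$ or reduce to the inductive hypothesis; your proposal contains no mechanism playing this role.

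A secondary inaccuracy: indecomposability does \emph{not} force the intermediate words $\nu^{(1)},\nu^{(2)},\ldots$ along a reduced expression to form a strictly decreasing chain ``followed by one move that overshoots''; the condition is only that every proper suffix-product lies strictly below the fixed $\nu$, with no monotonicity between consecutive stages, and the condition must hold simultaneously for all reduced expressions of $u$. Building the classification on the stronger (false in general) monotone picture would make the ``unwinding'' step fail even as a heuristic. So as it stands the proposal proves the easy implication and the uniqueness remark, but leaves the main implication unproved.
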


\begin{proof}
  The sufficiency is obvious, we show the necessity by induction on $n$. For $n =1,2$, the necessity is obvious.

  For $n\geqslant 3$, assume that $u$ is indecomposable relative to $\nu$ and the necessity holds for any positive integer less or equal than $n-1$.
  Let $v,w\in \SG_n$, such that $\nu = v.\gamma$, $\mu = w.\gamma$, then $w = uv$.
  In particular, $\nu_t = v^{-1}(t) \in I$ and $\mu_t = w^{-1}(t) \in I$.
  Since $\nu \prec \mu$, there exists $1\leq t\leq n$, such that $v(k) = w(k)$, $\forall 1\leqslant k < t$ and $v(t) < w(t)$.

  Assume $t > 1$. Let $p = v(1)=w(1)$. If $u = u_1 s_qs_{q-1}\cdots s_p u_2$, where $q\geq p$, $u_2 \in \SG_{[p+1,n]}$, $u_1$ is the distinguished minimal length left coset representative in $u\SG_{[p , n]}$.
  Then $u':=s_p u_2$ satisfies that $v(1) = p < p+1 = u'v(1)$, which implies that $u_1s_qs_{q-1}\cdots s_{p+1} \neq e$ and  $\nu \prec u'.\nu$, contradict to the assumption that $u$ is indecomposable relative to $\nu$. If $u = u_1 u_2$, where $u_2 \in \SG_{[p+1,n]}$, $u_1$ is the distinguished minimal length left coset representative in $u\SG_{[p , n]}$. Then we must have $u_1 \in \SG_{p-1}$ because $p = v(1) = u(p) = u_1(p) < u_1(p+1) < \cdots < u_1(n)$. Hence $u = u_1u_2 = u_2u_1$. Furthermore, if $v(t) < p$, then for any $1\leq k<t$, $$
  u_1v(k)=\begin{cases} u_1u_2v(k)=uv(k)=w(k)=v(k), &\text{if $v(k)<p$;}\\
  v(k), &\text{if $v(k)>p$,}
  \end{cases}
  $$
  while $u_1u_2v(t)=u(v(t))=w(t)>v(t)$. Hence $\nu \prec u_1.\nu$. In a similar way we can prove that if $v(t) > p$ then $\nu \prec u_2.\nu$.  Now $u$ is indecomposable relative to $\nu$ implies that in the former case we have $u_2 = e$ and $u =u_1$, while in the latter case $u_1 =e$ and $u = u_2$. In both case we can apply the induction hypothesis to deduce that $u$ is of the desired form.

  Assume $t =1$. Without loss of generality we may assume that $\mu_n \neq \nu_n$, since otherwise $u \in \SG_{n-1}$, and $u$ is of the desired form by induction hypothesis. Therefore, we can write the element $u$ uniquely as $u = s_ks_{k+1}\cdots s_{n-1}u_1$, where $u_1 \in \SG_{n-1}$ and $k \leqslant n-1$.
  It remains to show that $u_1=e$. Suppose this is not the case, i.e., $u_1 \neq e$. Set $u' := s_k u$. Then $u' \neq e$.
  Since $u$ is indecomposable relative to $\nu$, we have $u'.\nu \prec \nu \prec u.\nu$.
  We can deduce that $u'v(1) \in \{ k , k+1 \}$, because otherwise $u'v(1) = w(1) > v(1)$ which contradicts to $u'v.\gamma \prec v.\gamma$.
  Now $u'v.\gamma \prec w.\gamma$ implies that $w(1) = k +1$. That is, $u'v(1) = k = u_1v(1)$.
  Because $u_1.\nu \prec \nu$, $k = u_1v(1) \leqslant v(1) < w(1) = k+1$, we have $v(1) = u_1v(1) = k$.
  Using a similar argument as in the case $t > 1$, we can easily obtain that $u_1 = u'_1 u'_2$, with $u'_1 \in \SG_{[1,k-1]}$ and $u'_2 \in \SG_{[k+1,n-1]}$.
  We claim that $u'_1 =e$ since otherwise $u = u'_1 \cdot (s_ks_{k+1}\cdots s_{n-1}u'_2)$ is a decomposition of $u$ relative to $\nu$, contradiction.
  On the other hand, for each $k+1\leq j\leq n-2$, $s_ks_{k+1}\cdots s_{n-1}s_j s_{n-1}s_{n-2}\cdots s_{k}=s_{j+1}$. It follows that $t = s_ks_{k+1}\cdots s_{n-1}u'_2 s_{n-1}s_{n-2}\cdots s_{k}$ satisfies that $\ell (t) = \ell (u'_2)$. Thus $u'_2 \neq e$ implies that $t \cdot (s_k s_{k+1}\cdots s_{n-1})$ is a decomposition of $u$ relative to $\nu$, which is a contradiction. In summary, $u'_1 = e = u'_2$ and $u = s_ks_{k+1}\cdots s_{n-1}$.

  Finally, if $u = s_k s_{k+1}\cdots s_t$ is indecomposable relative to $\nu$, then it is easy to check that $\nu_k <\nu_{t+1} < \nu_p$ for $k+1 \leqslant p \leqslant t$. We are done.
\end{proof}

\begin{remark}
  The notion of ``indecomposable relative $\nu$'' can be easily understood using the diagrammatic presentation of the symmetric group $\SG_n$ (see, for example \cite[Exercise 1.5]{soergelbook}).  A reduced expression of $u$ gives a strand diagram in $\RM \times (0,1)$ matching $\nu$ and $\nu' = u.\nu$.
  For example, $u =\begin{pmatrix}1,2,3,4\\ 4,1,3,2\end{pmatrix}=s_2s_3s_2s_1\in\SG_4$, $\nu = (\nu_1,\nu_2,\nu_3,\nu_4) = (1,4,3,2)$. Then $\nu=(1,4,3,2)\prec u.\nu=(4,2,3,1)$:
  \begin{equation*}
    \begin{tikzpicture}
      \draw (0,0) edge (5,0);
      \draw (0,3) edge (5,3);
      \draw [dashed] (0,1.4) edge (5,1.4);
      \draw (1,0) node [anchor=north,color=black] {$\nu_1$};
      \draw (2,0) node [anchor=north,color=black] {$\nu_2$};
      \draw (3,0) node [anchor=north,color=black] {$\nu_3$};
      \draw (4,0) node [anchor=north,color=black] {$\nu_4$};
      \draw (1,3) node [anchor=south,color=black] {$\nu'_1$};
      \draw (2,3) node [anchor=south,color=black] {$\nu'_2$};
      \draw (3,3) node [anchor=south,color=black] {$\nu'_3$};
      \draw (4,3) node [anchor=south,color=black] {$\nu'_4$};
      \draw (5,1.4) node [anchor=west,color=black] {$\mu$};
      \draw (1,0) .. controls (1.5,0.8) and (2,1.2) .. (2.5,1.5) .. controls (3,1.8) and (3.5,2.2) .. (4,3);
      \draw (2,0) .. controls (2,1.8) and (1.5,2.2) .. (1,3);
      \draw (3,0) .. controls (3,0.8) and (2.4,2) .. (3,3);
      \draw (4,0) .. controls (3.8,1.4) and (2.6,2.3) .. (2,3);
    \end{tikzpicture}
  \end{equation*}
The element $u$ is decomposable relative to $\nu$ is equivalent to that we can find a strand diagram representing $u$ and a horizontal line $\RM \times \{ c \}$ with $0 < c <1$, such that their intersection $\mu$ satisfies $\nu \prec \mu$. In particular, the above diagram shows that $w$ is decomposable relative to $\nu$.

Let $\nu$ as above. Now we consider the element $v  = \begin{pmatrix}1,2,3,4\\ 2,3,4,1\end{pmatrix}=s_1s_2s_3\in\SG_4$ and $\nu'' = v.\nu=(2,1,4,3)$, then $v = s_1s_2s_3$ is the unique reduced decomposition of $v$ and
  \begin{equation*}
    \begin{tikzpicture}
      \draw (0,0) edge (5,0);
      \draw (0,3) edge (5,3);
      \draw (1,0) node [anchor=north,color=black] {$\nu_1$};
      \draw (2,0) node [anchor=north,color=black] {$\nu_2$};
      \draw (3,0) node [anchor=north,color=black] {$\nu_3$};
      \draw (4,0) node [anchor=north,color=black] {$\nu_4$};
      \draw (1,3) node [anchor=south,color=black] {$\nu'_1$};
      \draw (2,3) node [anchor=south,color=black] {$\nu'_2$};
      \draw (3,3) node [anchor=south,color=black] {$\nu'_3$};
      \draw (4,3) node [anchor=south,color=black] {$\nu'_4$};
      \draw (1,0) edge (2,3);
      \draw (2,0) edge (3,3);
      \draw (3,0) edge (4,3);
      \draw (4,0) edge (1,3);
    \end{tikzpicture}
  \end{equation*}
  it is easy to check that $v$ is indecomposable relative to $\nu$. One can also check that $v$ is decomposable relative to $\mu = (1,4,2,3)$.
\end{remark}

\begin{cor}
  If $u = s_k s_{k+1}\cdots s_t \in \SG_n$ is indecomposable relative to $\nu$, then $Q_{u,\nu} = \operatornamewithlimits{\prod}\limits_{p=k}^t Q_{\nu_p,\nu_{t+1}}(x_p , x_{t+1})$ is monic as polynomial of $x_{t+1}$.
\end{cor}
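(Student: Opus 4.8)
The plan is to unwind the definition \eqref{qunu} of $Q_{u,\nu}$ for the specific permutation at hand and then verify monicity one factor at a time.

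First I would make $u$ explicit. Since $u=s_ks_{k+1}\cdots s_t$, it is the cycle $(k,k{+}1,\dots,t{+}1)$: concretely $u(p)=p+1$ for $k\le p\le t$, $u(t+1)=k$, and $u$ fixes every point outside $[k,t+1]$. From this the inversion set of $u$ is immediate --- for $k\le a<b\le t$ one has $u(a)=a+1<b+1=u(b)$, while $u(p)=p+1>k=u(t+1)$ for every $k\le p\le t$ --- so the inversions of $u$ are precisely the pairs $(p,t+1)$ with $k\le p\le t$ (this is consistent with $\ell(u)=t-k+1$). Substituting into \eqref{qunu} gives the asserted product formula $Q_{u,\nu}=\prod_{p=k}^{t}Q_{\nu_p,\nu_{t+1}}(x_p,x_{t+1})$.

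Next I would determine the $x_{t+1}$-degree and leading coefficient of a single factor $Q_{\nu_p,\nu_{t+1}}(x_p,x_{t+1})$. Because $\beta$ satisfies \eqref{eq:DR}, the entries of $\nu$ are pairwise distinct (this also follows from Lemma~\ref{indecomp1}, which forces $\nu_k<\nu_{t+1}<\nu_p$), so $i:=\nu_p\ne\nu_{t+1}=:j$ and the factor is nonzero. Expanding $Q_{i,j}(u,v)=\sum_{r,s\ge 0}c_{i,j,r,s}u^rv^s$, the requirement $c_{i,j,r,s}\in\kb_{-2(\alpha_i,\alpha_j)-(\alpha_i,\alpha_i)r-(\alpha_j,\alpha_j)s}$ together with $\kb$ being non-negatively graded forces $(\alpha_j,\alpha_j)s\le-2(\alpha_i,\alpha_j)$, i.e. $s\le-a_{ji}$, and the equality case $s=-a_{ji}$ additionally forces $r=0$, the corresponding coefficient being $c_{i,j,0,-a_{ji}}=c_{j,i,-a_{ji},0}\in\kb_0^{\times}$. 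Hence $Q_{\nu_p,\nu_{t+1}}(x_p,x_{t+1})$ has $x_{t+1}$-degree $-a_{\nu_{t+1},\nu_p}$ with a unit of $\kb_0$ as leading coefficient.

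Finally I would multiply the factors. Since $\kb$ is a domain and each leading coefficient is a unit of $\kb_0$, no cancellation occurs, so $Q_{u,\nu}$ has $x_{t+1}$-degree $\sum_{p=k}^{t}(-a_{\nu_{t+1},\nu_p})$ and leading coefficient $\prod_{p=k}^{t}c_{\nu_{t+1},\nu_p,-a_{\nu_{t+1},\nu_p},0}\in\kb_0^{\times}$; that is, $Q_{u,\nu}$ is monic in $x_{t+1}$ (the leading coefficient being $1$ under the normalization $c_{i,j,-a_{ij},0}=1$). I do not expect a genuine obstacle here: the argument is routine, and the only points requiring care are reading off the inversion set of $u$ correctly and keeping the grading and sign conventions for the $c_{i,j,r,s}$ straight.
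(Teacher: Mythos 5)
Your proposal is correct and is essentially the argument the paper leaves implicit (the corollary is stated without proof as an immediate consequence of Lemma \ref{indecomp1} and the grading conditions on the $c_{i,j,p,q}$): you compute the inversion set of $u=s_ks_{k+1}\cdots s_t$ to recover the product formula from \eqref{qunu}, and then use the non-negative grading of $\kb$ together with the symmetry $c_{i,j,p,q}=c_{j,i,q,p}$ and $c_{j,i,-a_{ji},0}\in\kb_0^\times$ to see each factor has unit leading coefficient in $x_{t+1}$. The only cosmetic remark is that your appeal to $\kb$ being a domain is unnecessary (the leading coefficients being units already prevents cancellation, and in this section $\kb$ is in fact a field), and ``monic'' here is to be read, as elsewhere in the paper, as leading coefficient a unit of $\kb_0$.
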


Now, we can construct the monomial basis for cocenter $\RSS_\beta^\Lambda / [\RSS_\beta^\Lambda,\RSS_\beta^\Lambda]$.

\begin{thm}\label{thm:MBCocen}
  Assume that $\beta$ satisfies \eqref{eq:DR}, and fix an initial weight $\gamma \in I^\beta$. For each $\nu\in I^\beta$ and $1\leq t\leq n$, we define $k_{\nu,t}:=\max\{1\leq p<t|\nu_p<\nu_t\}$; or $0$ if $\{1\leq p<t|\nu_p<\nu_t\}=\emptyset$. Set
  $$
  T_\gamma^\Lambda := \Biggl\{ x^{\un{a}}e(\nu) \Biggm| \begin{matrix}\text{$\nu \in I^\beta, \un{a}\in\NM^n$, and for any $1\leq t\leq n$,}\\
  \text{$a_t < - \operatornamewithlimits{\sum}\limits_{l = \max\{k_{\nu,t},1\}}^{t-1} a_{\nu_t , \nu_l} + \delta_{k_{\nu,t}, 0}\la \alpha_{\nu_t}^\vee , \Lambda \ra$} \end{matrix} \Biggr\}.
  $$
  Then the canonical image of $T_\gamma^\Lambda$ in $\RSS_\beta^\Lambda / [\RSS_\beta^\Lambda,\RSS_\beta^\Lambda]$ gives a $\kb$-basis of $\RSS_\beta^\Lambda / [\RSS_\beta^\Lambda,\RSS_\beta^\Lambda]$.
\end{thm}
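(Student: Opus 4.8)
The plan is to work throughout with the presentation $\RSS_\beta^\Lambda/[\RSS_\beta^\Lambda,\RSS_\beta^\Lambda]\cong P_\beta/D_\beta^\Lambda$ established above, where $D_\beta^\Lambda$ is the $\kb$-span of $G'_2$, and to show that $G'_2$ behaves like a Gröbner basis of $D_\beta^\Lambda$ for a carefully chosen monomial order. On the monomials $x^{\un a}e(\nu)$ ($\nu\in I^\beta$, $\un a\in\NM^n$) I define a total order $\lhd$ by: $x^{\un a}e(\mu)\lhd x^{\un b}e(\nu)$ if $\nu\prec\mu$ (that is, weights are compared by the \emph{reverse} of the order fixed by $\gamma$, so that $\gamma$ is $\lhd$-maximal among weights), or $\mu=\nu$ and $\un a$ precedes $\un b$ in the anti-lexicographic order on exponents (highest index most significant). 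Since $D_\beta^\Lambda$ is homogeneous for the $\ZM$-grading, $\lhd$ restricts to a well-order on each graded component, which is all that is needed.

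\textbf{Leading terms and spanning.} The first step is to record the $\lhd$-leading terms of the elements of $G'_2$. Using the explicit shape $g_{e,\nu,k}^\Lambda=a_{\nu_k}^\Lambda(x_k)\prod_{l<k}Q_{\nu_l,\nu_k}(x_l,x_k)$, this polynomial is monic in $x_k$ with unit leading coefficient and $x_k$-degree $N_k=\langle\alpha_{\nu_k}^\vee,\Lambda\rangle+\sum_{l<k}(-a_{\nu_k,\nu_l})$, so $g_{e,\nu,k}^\Lambda x^{\un b}e(\nu)$ has leading monomial $x^{\un b}x_k^{N_k}e(\nu)$; as $k$ and $\un b$ run over the index set in $G'_2$ these exhaust the monomials $x^{\un a}e(\nu)$ with $a_t\geq N_t$ for some $t$. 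For the commutator generators, the corollary after Lemma~\ref{indecomp1} gives that for the unique element $u=s_{k_{\nu,s}}s_{k_{\nu,s}+1}\cdots s_{s-1}$ indecomposable relative to $\nu$ (defined when $k_{\nu,s}\geq 1$), $Q_{u,\nu}=\prod_{p=k_{\nu,s}}^{s-1}Q_{\nu_p,\nu_s}(x_p,x_s)$ is monic in $x_s$ with unit leading coefficient and $x_s$-degree $D_s:=\sum_{p=k_{\nu,s}}^{s-1}(-a_{\nu_s,\nu_p})$; since $\nu\prec u.\nu$, the head $Q_{u,\nu}x^{\un a}e(\nu)$ sits $\lhd$-above the tail $u.(Q_{u,\nu}x^{\un a}e(\nu))$, so the commutator generator has leading monomial $x^{\un a}x_s^{D_s}e(\nu)$, and these exhaust the monomials $x^{\un a}e(\nu)$ with $a_s\geq D_s$ for some $s$ with $k_{\nu,s}\geq 1$. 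Comparing with the definition of $T_\gamma^\Lambda$, and using that $N_t\geq D_t$ whenever $k_{\nu,t}\geq 1$, one checks that the set of $\lhd$-leading monomials of the elements of $G'_2$ is \emph{exactly} the complement of $T_\gamma^\Lambda$, and that every monomial in this complement arises as the leading monomial of some element of $G'_2$. Spanning then follows by Noetherian induction on $\lhd$: a monomial outside $T_\gamma^\Lambda$, after subtracting the matching element of $G'_2$ rescaled by the inverse of its unit leading coefficient, becomes a $\kb$-linear combination of strictly $\lhd$-smaller monomials modulo $D_\beta^\Lambda$; iterating, the image of $T_\gamma^\Lambda$ spans $P_\beta/D_\beta^\Lambda$.

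\textbf{Linear independence.} It now suffices to prove $\dim_\kb(D_\beta^\Lambda)_d\leq\#\{\text{monomials of degree }d\text{ not in }T_\gamma^\Lambda\}$ in each degree $d$, since combined with spanning this forces $\dim_\kb(P_\beta/D_\beta^\Lambda)_d=\#(T_\gamma^\Lambda)_d$ and hence that the image of $T_\gamma^\Lambda$ is a basis. This amounts to showing that $G'_2$ is a Gröbner basis, i.e. that the $S$-polynomial of every critical pair reduces to $0$ modulo $G'_2$ with strictly smaller leading monomial. Two cyclotomic generators cannot share a leading monomial (the constraint $b_t<N_t$ for $t>k$ in $G'_2$ forbids it), so the critical pairs are commutator/commutator and commutator/cyclotomic at a common weight $\nu$. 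In each case the $S$-polynomial is supported on the tails and the lower anti-lexicographic terms, hence on $\lhd$-smaller monomials; the verification that running it through the reduction procedure of the previous paragraph never produces a leading monomial lying in $T_\gamma^\Lambda$ relies on the combinatorics of Lemma~\ref{indecomp1} — crucially, that composing the elementary transpositions $s_j$, $k_{\nu,s}\leq j\leq s-1$, reproduces exactly the longer indecomposable element $s_{k_{\nu,s}}\cdots s_{s-1}$, so that the competing reductions of an $S$-polynomial are rigidly constrained — together with the identity $Q_{i,j}(u,v)=Q_{j,i}(v,u)$ and the monicity of $Q_{i,j}$ and $a_i^\Lambda$.

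\textbf{Main obstacle.} The hard part is precisely this last point: the explicit resolution of all critical pairs of $G'_2$. This is where the notion ``indecomposable relative to $\nu$'' does real work — an over-large exponent $a_s\geq D_s$ at weight $\nu$ can be absorbed only through the single indecomposable element determined by $\nu$ and $s$, and this rigidity is what prevents an $S$-polynomial from yielding a genuinely new relation. I expect the argument to require a case analysis organized by the relative positions of the two indices $s<s'$ occurring in a critical pair with respect to their associated left-to-right minima $k_{\nu,s},k_{\nu,s'}$, and by whether the cyclotomic factor $a^\Lambda$ intervenes; all the input needed (the shapes of $Q_{i,j}$, $a_i^\Lambda$ and $g_{e,\nu,k}^\Lambda$, and Lemma~\ref{indecomp1}) is already in place.
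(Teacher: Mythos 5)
Your reduction to $P_\beta/D_\beta^\Lambda$, your leading--monomial analysis of the two families in $G'_2$ (the cyclotomic generators $g^\Lambda_{e,\nu,k}x^{\un{b}}e(\nu)$ monic in $x_k$ of degree $N_k$, and the commutator generators attached to the unique indecomposable element of Lemma~\ref{indecomp1}, monic in $x_s$ of degree $D_s=\sum_{p=k_{\nu,s}}^{s-1}(-a_{\nu_s,\nu_p})$), and your conclusion that the complement of $T_\gamma^\Lambda$ is exactly the set of leading monomials, so that $T_\gamma^\Lambda$ spans the cocenter, are all correct and parallel the first step of the paper's proof.

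The genuine gap is the linear independence half, which is the actual content of the theorem: you reduce it to showing that $G'_2$ is a Gr\"obner basis and then explicitly defer the resolution of the critical pairs (``I expect the argument to require a case analysis \dots''), so the proof stops exactly where the difficulty begins. Two points make this deferral more than a routine omission. First, $D_\beta^\Lambda$ is only a $\kb$-submodule of $P_\beta$, not an ideal: multiplying $P(u,\nu,\un{a})=Q_{u,\nu}x^{\un{a}}e(\nu)-u.\bigl(Q_{u,\nu}x^{\un{a}}e(\nu)\bigr)$ by $f\in\kb[x_1,\dots,x_n]$ gives a tail carrying $f$ rather than $u.f$, so Buchberger's criterion for polynomial ideals does not apply verbatim; one needs a diamond-lemma style confluence argument tailored to this span, and that is nontrivial precisely because distinct generators do share leading monomials (a monomial with $a_s\geq D_s$ and $a_{s'}\geq D_{s'}$, or with $a_s\geq D_s$ and $a_t\geq N_t$, is the leading monomial of several elements of $G'_2$), so the naive dimension count fails. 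The paper avoids critical pairs altogether: it carves out a subset $D'_2\subset D_2$ by explicit exponent bounds $d_{\nu,u,t}$ so that the leading monomials of $D'_2\sqcup M_2$ are in bijection with the complement of $T_\gamma^\Lambda$, proves by triangularity that $T_\gamma^\Lambda\sqcup D'_2\sqcup M_2$ is a $\kb$-basis of $P_\beta$, and then shows in a four-case maximal-counterexample argument --- using \eqref{eq:redind}, the factorization $g^\Lambda_{e,\nu,k}=a^\Lambda_{\nu_k}(x_k)\prod_{p<k}Q_{\nu_p,\nu_k}(x_p,x_k)$, and Lemma~\ref{indecomp1} --- that every generator in $D_2\setminus D'_2$ already lies in the span of $D'_2\sqcup M_2$, whence $D'_2\sqcup M_2$ is a basis of $D_\beta^\Lambda$ and independence of $T_\gamma^\Lambda$ follows. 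That second step is exactly the work your ``main obstacle'' paragraph postpones; without it (or a completed confluence verification in your framework) you have only established that $T_\gamma^\Lambda$ spans, not that it is a basis.
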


\begin{proof} Recall the definition of $G'_2$ in the paragraph above Lemma \ref{indecomp1}, where we have decomposed $G'_2$ as a disjoint union
$G'_2 = D_2 \sqcup M_2$, such that $D_2$ consists of those commutators and $M_2$ is a basis of $P_\beta \cap I_{\Lambda ,\beta}$ by Corollary~\ref{cor:MBDCR}.
 We are going to pick out a subset $D'_2$ of $D_2$ such that $T^\Lambda_\gamma \sqcup D'_2 \sqcup M_2$ form a $\kb$-basis of $\RSS_\beta$ and $G'_2$ is generated by $D'_2 \sqcup M_2$. Once this is done,
 it is clear that the canonical image of $T_\gamma^\Lambda$ in $\RSS_\beta^\Lambda / [\RSS_\beta^\Lambda,\RSS_\beta^\Lambda]$ gives a $\kb$-basis of $\RSS_\beta^\Lambda / [\RSS_\beta^\Lambda,\RSS_\beta^\Lambda]$ because
$D'_2 \sqcup M_2$ is a $\kb$-basis of $[\RSS_\beta,\RSS_\beta]+I_{\Lam,\beta}$.

  For any $\nu \in I^\beta$, $u\in\SG_n$ which is
  indecomposable relative to $\nu$ and $\un{a}\in\NM^n$, we set $P(u,\nu ,\un{a}) := Q_{u,\nu}x^{\un{a}}e(\nu) - u.\bigl( Q_{u,\nu}x^{\un{a}}e(\nu) \bigr)$.
  Define a subset $D'_2$ of $D_2$ as follows:
  $$
  \begin{aligned}
    & D'_2 = \left\{ P(u,\nu ,\un{a})\in D_2 \Biggm| \begin{array}{c} \nu \in I^\beta ,\, \un{a} \in \NM^n ,\, u = s_ps_{p+1}\cdots s_{q-1}, \\ \text{$u$ is
    indecomposable relative to $\nu$,} \\ a_t <  d_{\nu ,u ,t} \; \forall\, 1\leqslant t \leqslant n \end{array} \right\}
  \end{aligned}
  $$
  where $d_{\nu,u,t}$ (for $u =s_ps_{p+1}\cdots s_{q-1}$) is defined by
  $$
  d_{\nu,u,t} = \begin{cases}
    - \operatornamewithlimits{\sum}\limits_{k = \max (k_{\nu ,t} , 1)}^{t-1} a_{\nu_t , \nu_k} + \delta_{k_{\nu,t} , 0}\la \alpha_{\nu_t}^\vee , \Lambda \ra &
    \text{if } t > q; \\
    \la \alpha_{\nu_t}^\vee , \Lambda \ra - \operatornamewithlimits{\sum}\limits_{k=1}^{p-1} a_{\nu_t,\nu_k} & \text{if } t = q;\\
    \la \alpha_{\nu_t}^{\vee} , \Lambda \ra - \operatornamewithlimits{\sum}\limits_{k=1}^{t-1} a_{\nu_t,\nu_k}  & \text{if } 1\leqslant t < q.
  \end{cases}
  $$

  \emph{Step 1}. We claim that $T^\Lam_\gamma \sqcup D'_2\sqcup M_2$ is a basis of $P_\beta$. Recall that by Proposition~\ref{prop:MBKLR}, $B_\beta = \{ x^{\un{a}}e(\nu)\mid \nu \in I^\beta, \un{a}\in\NM^n \}$ is
  a $\kb$-basis of $P_\beta$.

  We define an total order $\prec_\gamma$ on $B_\beta$ as follows: $x^{\un{a}}e(\nu) \prec_\gamma x^{\un{b}}e(\mu)$ if either $\nu \prec_\gamma \mu$, or $\nu = \mu$ and there is $k\in [1,n]$ s.t. $a_t = b_t$ for $t > k$ and $a_k > b_k$.
  The assumption (\ref{eq:DR}) implies that $g^\Lambda_{e,\nu , k}e(\nu)=a_{\nu_k}^\Lam(x_k)\prod_{p=1}^{k-1}Q_{\nu_p,\nu_k}(x_p,x_k)e(\nu)$.
  Then, for $P(u,\nu , \un{a}) \in D_2$ and $g^\Lambda_{e,\nu , k}x^{\un{b}}e(\nu) \in M_2$, we have
  $$
  \begin{aligned}
    & P(u,\nu , \un{a}) = c \cdot x^{\un{a}}\cdot x_t^{-\operatornamewithlimits{\sum}\limits_{p=k}^{t-1}a_{\nu_t,\nu_p}}e(\nu) + \text{``higher terms'',} \qquad \text{if $u = s_ks_{k+1}\cdots s_{t-1}$}, \\
    & g^\Lambda_{e,\nu , k}x^{\un{b}}e(\nu) = c'\cdot x^{\un{b}} \cdot x_k^{- \operatornamewithlimits{\sum}\limits_{p=1}^{k-1} a_{\nu_k , \nu_p} + \la \alpha_{\nu_k}^\vee , \Lambda \ra}e(\nu) + \text{``higher terms''}.
  \end{aligned}
  $$
  where $c,c'$ are invertible and ``higher terms'' means a $\kb$-linear combination of monomials $x^{\un{d}}e(\mu)\in B_\beta$ that is larger than the ``leading term'' $x^{\un{a}}\cdot x_t^{-\operatornamewithlimits{\sum}\limits_{p=k}^{t-1}a_{\nu_t,\nu_p}}e(\nu)$ and $x^{\un{b}} \cdot x_k^{- \operatornamewithlimits{\sum}\limits_{p=1}^{k-1} a_{\nu_k , \nu_p} + \la \alpha_{\nu_k}^\vee , \Lambda \ra}e(\nu)$ under the order $\prec_\gamma$ respectively.
  For any $x^{\un{a}}e(\nu) \in B_\beta$, one can find a unique element $f \in T_\gamma^\Lambda \sqcup D'_2\sqcup M_2$, s.t. $f = c\cdot x^{\un{a}}e(\nu) +
  \text{``higher terms''}$. This implies the claim of this step.

  \emph{Step 2}. We claim that $D_\beta^\Lambda$ is generated as a $\kb$-module by $D'_2\sqcup M_2$. Suppose this is not the case. Then we can find $P(u,\nu ,\un{a}) \in D_2\backslash D'_2$, which does not belong to the $\kb$-module generated by $D'_2\sqcup M_2$.
  As $B_\beta$ is upper bounded under $\prec_\gamma$, we can find such a $P(u,\nu ,\un{a})$ whose leading term is maximal under $\prec_\gamma$.
  By the definition of $D'_2$, there is some $t\in [1,n]$, such that $a_t \geqslant d_{\nu ,u,t}$ and we can choose $t$ such that it is as maximal as possible.
  Now, by Lemma~\ref{indecomp1} , $u$ is form of $s_ps_{p+1}\cdots s_{q-1}$.

  (1) If $1\leqslant t<q$, we have
  \begin{equation}
    \begin{aligned}
      P(u,\nu,\un{a}) = & \, c\cdot \left(Q_{u,\nu}g^\Lambda_{e,\nu , t}x^{\un{a}}\cdot x_t^{-d_{\nu ,u,t}}e(\nu) - u.\bigl( Q_{u,\nu}g^\Lambda_{e,\nu , t}x^{\un{a}}\cdot x_t^{-d_{\nu ,u,t}}e(\nu)\bigr)\right) \\
      & + \text{``higher terms''}
    \end{aligned}
  \end{equation}
  where $c\in \kb^\times$ and ``higher term'' is a $\kb$-linear combination of some $P(u ,\nu ,\un{b})$ whose leading terms are larger than that of
  $P(u,\nu ,\un{a})$.
  By our assumption, ``higher terms'' belong to the $\kb$-submodule generated by $D'_2\sqcup M_2$.
  Note that $Q_{u,\nu}g^\Lambda_{e,\nu , t}x^{\un{a}}\cdot x_t^{-d_{\nu ,u,t}}e(\nu)$ belong to the $\kb$-submodule generated by $M_2$ and so does $$
  u.\bigl( Q_{u,\nu}g^\Lambda_{e,\nu , t}x^{\un{a}}\cdot x_t^{-d_{\nu ,u,t}}e(\nu)\bigr) = \tau_u \tau_{t-1}\tau_{t-2}\cdots \tau_1 a_{\nu_t}^\Lambda
  (x_1)\tau_1\tau_2\cdots \tau_{t-1}\tau_{u^{-1}}e(u.\nu). $$
  This implies that $P(u,\nu ,\un{a})$ belongs to the $\kb$-module generated by $D'_2\sqcup M_2$, contradiction.

  (2) If $t =q$, then as $g^\Lambda_{e,\nu ,q} = Q_{u,\nu}\cdot a_{\nu_q}^\Lambda (x_q) \cdot
  \operatornamewithlimits{\prod}\limits_{k=1}^{p-1}Q_{\nu_k,\nu_q}(x_k,x_q)$, we have
  \begin{equation}
    \begin{aligned}
      P(u,\nu,\un{a}) = & \, c\cdot \left( g^\Lambda_{e,\nu ,q}x^{\un{a}}\cdot x_q^{-d_{\nu,u,q}}e(\nu) - u.(g^\Lambda_{e,\nu ,q}x^{\un{a}}\cdot x_q^{-d_{\nu,u,q}}e(\nu)) \right) + \text{``higher terms''},
    \end{aligned}
  \end{equation}
  where $c\in \kb^\times$ and ``higher term'' is a $\kb$-combination of $P(u ,\nu ,\un{b})$ whose leading terms are larger than that of $P(u,\nu ,\un{a})$.
  A similar argument as in case (1) shows that both $g^\Lambda_{e,\nu ,q}x^{\un{a}}\cdot x_q^{-d_{\nu,u,q}}e(\nu)$ and $u.(g^\Lambda_{e,\nu ,q}x^{\un{a}}\cdot x_q^{-d_{\nu,u,q}}e(\nu))$ belong to the $\kb$-module generated by $M_2$ and this implies that $P(u,\nu ,\un{a})$ belongs to the $\kb$-module generated by $D'_2\sqcup M_2$, contradiction.

  (3) If $t>q$ and the set $A_{\nu ,t} = \{ 1\leqslant i <t \mid \nu_i <\nu_t \} \neq \emptyset$, then we set $k = \max A_{\nu,t}$,
  \begin{equation}
  \begin{aligned}
    P(u,\nu,\un{a}) & = \, c\cdot \Bigl( Q_{u,\nu}Q_{v,\nu}x^{\un{a}}\cdot x_{t}^{-d_{\nu ,u,t}} e(\nu) - u.\bigl( Q_{u,\nu}Q_{v,\nu}x^{\un{a}}\cdot
x_{t}^{-d_{\nu ,u ,t}}  e(\nu) \bigr) \Bigr) \\
    & \qquad + \text{``higher terms''},
  \end{aligned}
  \end{equation}
  where $v = s_ks_{k+1}\cdots s_{t-1}$ and ``higher terms'' means a $\kb$-linear combination of $P(u ,\nu ,\un{b})$ whose leading terms are larger than that of
$P(u,\nu ,\un{a})$.
  By assumption, these $P(u ,\nu ,\un{b})$ belong to the $\kb$-module generated by $D'_2\sqcup M_2$.
  On the other hand, set $x^{\un{c}} = x^{\un{a}}\cdot x_{t}^{-d_{\nu ,u ,t}}$, the assumption on $A_{\nu,t}$ yields that $v$ is indecomposable relative to
$\nu$ and we have
  \begin{equation}
    \begin{aligned}
      Q_{u,\nu}Q_{v,\nu}x^{\un{c}} e(\nu) - u.\bigl( Q_{u,\nu}Q_{v,\nu}x^{\un{c}} e(\nu) \bigr) & = \, Q_{u,\nu}Q_{v,\nu}x^{\un{c}} e(\nu) - v.\bigl( Q_{u,\nu}Q_{v,\nu}x^{\un{c}} e(\nu) \bigr) \\
      & \quad + v.\bigl( Q_{u,\nu}Q_{v,\nu}x^{\un{c}} e(\nu) \bigr) - u.\bigl( Q_{u,\nu}Q_{v,\nu}x^{\un{c}} e(\nu) \bigr) 
    \end{aligned}
  \end{equation}
  It is easy to check that $\ell (uv^{-1}) = \ell (u) + \ell (v^{-1})$, and by definition, we have
  $$
  \begin{aligned}
    & v.\bigl( Q_{u,\nu}Q_{v,\nu}x^{\un{c}} e(\nu) \bigr) = \tau_v\tau_{u^{-1}}\tau_u\tau_{v^{-1}}x^{v.\un{c}}e(v.\nu) = Q_{uv^{-1},v.\nu}x^{v.\un{c}}e(v.\nu) \\
    & u.\bigl( Q_{u,\nu}Q_{v,\nu}x^{\un{c}} e(\nu) \bigr) = \tau_{u}\tau_{v^{-1}}\tau_{v}\tau_{u^{-1}}x^{u.\un{c}}e(u.\nu) = Q_{vu^{-1},u.\nu}x^{u.\un{c}}e(u.\nu) \\
    & v.\bigl( Q_{u,\nu}Q_{v,\nu}x^{\un{c}} e(\nu) \bigr) - u.\bigl( Q_{u,\nu}Q_{v,\nu}x^{\un{c}} e(\nu) \bigr) \\
    & \qquad  \qquad \qquad \qquad = Q_{uv^{-1},v.\nu}x^{v.\un{c}}e(v.\nu) - uv^{-1}.\bigl( Q_{uv^{-1},v.\nu}x^{v.\un{c}}e(v.\nu) \bigr)
  \end{aligned}
  $$
  Now, as $\nu \prec_\gamma v.\nu$ and $\nu\prec_\gamma u.\nu$, \eqref{eq:redind} implies that $ v.\bigl( Q_{u,\nu}Q_{v,\nu}x^{\un{c}} e(\nu) \bigr) -
u.\bigl( Q_{u,\nu}Q_{v,\nu}x^{\un{c}} e(\nu) \bigr)$ is a $\kb$-linear combination of $P(w,\mu , \un{b})$ with $\nu \prec_\gamma \mu$ and hence belongs to the
$\kb$-submodule generated by $D'_2\sqcup M_2$ by our assumption.

  On the other hand, $Q_{u,\nu}Q_{v,\nu}x^{\un{c}} e(\nu) - v.\bigl( Q_{u,\nu}Q_{v,\nu}x^{\un{c}} e(\nu) \bigr)$ is a $\kb$-linear combination of some $P(v,\nu ,\un{b})$ and the one having minimal leading term among those appear with non-zero coefficients is $P(v,\nu , \un{d})$ satisfying that $x^{\un{d}}e(\nu) = x^{\un{c}}x_q^{-\operatornamewithlimits{\sum}\limits_{i = p}^{q-1}a_{\nu_q ,\nu_i}}e(\nu)$.
 Note that the leading term of $P(u,\nu ,\un{a})$ is equal to that of $P(v,\nu , \un{d})$ and this yields that $P(v,\nu ,\un{d})$ does not belong to the $\kb$-module generated by $D'_2\sqcup M_2$ by our assumption.
  But for $t' > t > q$, we have $d_{\nu,v,t'} = d_{\nu ,u,t'} = a_{t'} = d_{t'}$.
  As by assumption $P(v,\nu ,\un{d}) \not\in D'_2$, we can find $p\in [1,n]$ as large as possible such that $d_{p} \geqslant d_{\nu ,v ,p}$. The argument above
shows that $p\leqslant t$. Therefore, we are in a position to apply the case (1) (if $1\leqslant p < t$); or the case (2) (if $p =t$) to $P(v,\nu,\un{d})$.
In both case, we obtain a contradiction.

  (4) If $t > q$ and $A_{\nu ,t} = \{ 1\leqslant i <t \mid \nu_i <\nu_t \} = \emptyset$, then we have
  \begin{equation}
    \begin{aligned}
      P(u,\nu,\un{a}) = & \, c\cdot \left(Q_{u,\nu}g^\Lambda_{e,\nu , t}x^{\un{a}}\cdot x_t^{-d_{\nu ,u,t}}e(\nu) - u.\bigl( Q_{u,\nu}g^\Lambda_{e,\nu , t}x^{\un{a}}\cdot x_t^{-d_{\nu ,u,t}}e(\nu)\bigr)\right) \\
      & + \text{``higher terms''}
    \end{aligned}
  \end{equation}
  where $c\in \kb^\times$ and ``higher term'' is a $\kb$-linear combination of $P(u ,\nu ,\un{b})$ whose leading terms are larger than that of $P(u,\nu ,\un{a})$.
  An argument similar to the case (1) yields a contradiction.

  \emph{Step 3}. $D'_2\sqcup M_2$ is a $\kb$-basis of $D_\beta^\Lambda$. This is a immediate consequence of Step 1 and 2.
  Moreover, the image of $T_\gamma^\Lambda$ is a basis of $\RSS_\beta^\Lambda / [\RSS_\beta^\Lambda,\RSS_\beta^\Lambda]$.
  We're done.
\end{proof}

\begin{lem}\label{finalInject}
  Assume that $\beta \in Q_n^+$ satisfies \eqref{eq:DR}. For $i\in I$ and $\Lambda \in P^+$, set $\widetilde{\Lambda} = \Lambda + \Lambda_i$. Then the $\kb$-linear
homomorphism $\ov{\iota}_{\beta}^{\Lambda,i}$ is injective. Equivalently, the following natural algebra homomorphism
  $$\ov{p}_\beta^{\Lambda ,i} : Z(\RSS_\beta^{\widetilde{\Lambda}}) \to Z(\RSS_\beta^\Lambda)$$
  is surjective.
\end{lem}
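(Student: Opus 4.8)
The plan is to run the ``cocenter approach'' prepared at the end of Section~\ref{sec:DC}. By Theorem~\ref{thm:annker} (together with Lemma~\ref{lem:injdual}) the map $\ov{\iota}_\beta^{\Lambda,i}$ is the one induced on cocenters by multiplication with the central element $z(i,\beta)$, and by Theorem~\ref{thm:MBCocen} the cocenter $\RSS_\beta^\Lambda/[\RSS_\beta^\Lambda,\RSS_\beta^\Lambda]$ has the explicit monomial basis $T_\gamma^\Lambda$. Since $\widetilde{\Lambda}=\Lambda+\Lambda_i\in P^+$, the same theorem applied to $\widetilde{\Lambda}$ (with the same initial weight $\gamma$) gives a monomial basis $T_\gamma^{\widetilde{\Lambda}}$ of $\RSS_\beta^{\widetilde{\Lambda}}/[\RSS_\beta^{\widetilde{\Lambda}},\RSS_\beta^{\widetilde{\Lambda}}]$. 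I would therefore prove the lemma by showing that, for a suitable choice of $\gamma$, the map $\ov{\iota}_\beta^{\Lambda,i}$ carries $T_\gamma^\Lambda$ injectively into $T_\gamma^{\widetilde{\Lambda}}$: a $\kb$-linear map sending a basis into a basis is automatically injective. The asserted equivalence with the surjectivity of $\ov{p}_\beta^{\Lambda,i}$ is the center--cocenter duality for the symmetric $\kb$-algebras $\RSS_\beta^\Lambda$ and $\RSS_\beta^{\widetilde{\Lambda}}$ recalled before Lemma~\ref{lem:injdual}.

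Before the main point I would dispose of the degenerate case $i\notin\supp\beta$: then $\RSS_\beta^\Lambda=\RSS_\beta^{\widetilde{\Lambda}}$ and $z(i,\beta)=1$ by \eqref{eq:DC}, so there is nothing to prove; hence assume $\alpha_i$ occurs in $\beta$. Because $\beta$ satisfies \eqref{eq:DR}, the index $i$ occurs exactly once in every $\nu\in I^\beta$, say in position $k_i(\nu)\in\{1,\ldots,n\}$, and $I^\beta$ contains an element $\gamma$ with $\gamma_1=i$; I would take such a $\gamma$ as the initial weight of Section~\ref{sec:MBCocen}. Then $i$ is the least element of $I$ for the total order induced by $\gamma$, and the formula \eqref{eq:DC} collapses to
$$z(i,\beta)=\sum_{\nu\in I^\beta}x_{k_i(\nu)}e(\nu).$$
Using that the $e(\nu)$ are orthogonal idempotents commuting with the $x_k$'s, it follows that $\ov{\iota}_\beta^{\Lambda,i}$ sends the class of a monomial $x^{\un{a}}e(\nu)$ to the class of $x^{\un{a}+\eb_{k_i(\nu)}}e(\nu)$, where $\eb_t\in\NM^n$ denotes the $t$-th standard basis vector.

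Finally I would compare the defining inequalities of $T_\gamma^\Lambda$ and $T_\gamma^{\widetilde{\Lambda}}$. The quantities $k_{\nu,t}$ depend only on $\nu$ and the fixed order on $I$, and the Cartan sums $\sum_l a_{\nu_t,\nu_l}$ are the same for both weights, so the bound on $a_t$ appearing in the description of $T_\gamma^{\widetilde{\Lambda}}$ exceeds the corresponding bound for $T_\gamma^\Lambda$ by exactly $\delta_{k_{\nu,t},0}\la\alpha_{\nu_t}^\vee,\Lambda_i\ra=\delta_{k_{\nu,t},0}\delta_{\nu_t,i}$. This is where the choice $\gamma_1=i$ enters: whenever $\nu_t=i$ the element $\nu_t$ is the least element of $I$, so $k_{\nu,t}=0$ automatically, and hence the excess equals $1$ at the single position $t=k_i(\nu)$ and $0$ at all other positions. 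Consequently
$$x^{\un{a}}e(\nu)\in T_\gamma^\Lambda\ \Longleftrightarrow\ x^{\un{a}+\eb_{k_i(\nu)}}e(\nu)\in T_\gamma^{\widetilde{\Lambda}},$$
so $\ov{\iota}_\beta^{\Lambda,i}(T_\gamma^\Lambda)\subseteq T_\gamma^{\widetilde{\Lambda}}$ and $\ov{\iota}_\beta^{\Lambda,i}$ is injective on $T_\gamma^\Lambda$ (distinct pairs $(\un{a},\nu)$ give distinct pairs $(\un{a}+\eb_{k_i(\nu)},\nu)$); this yields the injectivity of $\ov{\iota}_\beta^{\Lambda,i}$ and hence the surjectivity of $\ov{p}_\beta^{\Lambda,i}$. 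The only genuine subtlety is the choice of $\gamma$: for a generic initial weight the bound at the position $t=k_i(\nu)$ is controlled by a Cartan term together with $k_{\nu,t}\neq0$ and does not shift when $\Lambda$ is replaced by $\widetilde{\Lambda}$, so $\ov{\iota}_\beta^{\Lambda,i}$ need not land inside $T_\gamma^{\widetilde{\Lambda}}$; taking $\gamma_1=i$ forces $k_{\nu,t}=0$ precisely at the position where the exponent is raised, which makes the passage $T_\gamma^\Lambda\to T_\gamma^{\widetilde{\Lambda}}$ match multiplication by $z(i,\beta)$ on the nose.
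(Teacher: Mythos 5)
Your proposal is correct and follows essentially the same route as the paper: choose the initial weight $\gamma$ with $\gamma_1=i$ so that $i$ is minimal for the induced order (forcing $k_{\nu,t}=0$ at the unique position with $\nu_t=i$), observe via Theorem~\ref{thm:annker} that $\ov{\iota}_\beta^{\Lambda,i}$ acts on cocenter classes by multiplication with $z(i,\beta)$, and check that this sends the monomial basis $T_\gamma^\Lambda$ of Theorem~\ref{thm:MBCocen} injectively into $T_\gamma^{\widetilde{\Lambda}}$. Your explicit treatment of the case $i\notin\supp\beta$ and the remark on why a generic $\gamma$ would fail are harmless additions; the core argument matches the paper's proof.
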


\begin{proof}
  Pick $\gamma \in I^\beta$ such that $\gamma_1 = i$. Let $x^{\un{a}}e(\nu) \in T_\gamma^\Lambda$. Then
  $$z(i,\beta)x^{\un{a}}e(\nu) = x^{\un{b}}e(\nu), $$
  where $b_t = a_t + \delta_{\nu_t,i}$.
  For $\nu \in I^\beta$, there is at most one $t\in [1,n]$, such that $\nu_t = i$, and in this case,
 $$
    \begin{aligned}
      & a_t < - \operatornamewithlimits{\sum}\limits_{l =1}^{t-1} a_{\nu_t , \nu_l} + \la \alpha_{\nu_t}^\vee , \Lambda \ra ,\\
      & b_t < - \operatornamewithlimits{\sum}\limits_{l = 1}^{t-1} a_{\nu_t , \nu_l} + \la \alpha_{\nu_t}^\vee ,
\Lambda \ra + 1 =
 - \operatornamewithlimits{\sum}\limits_{l = 1}^{t-1} a_{\nu_t , \nu_l} + \la \alpha_{\nu_t}^\vee , \widetilde{\Lambda} \ra
    \end{aligned}
$$
  and for any $1\leq p\leq n$ with $p\neq t$, we have   $$
    \begin{aligned}
      & a_p < - \operatornamewithlimits{\sum}\limits_{l = \max\{k_{\nu,p},1\}}^{p-1} a_{\nu_p , \nu_l} + \delta_{k_{\nu,p} , 0}\la \alpha_{\nu_p}^\vee ,
\Lambda \ra ,\\
      & b_p < - \operatornamewithlimits{\sum}\limits_{l = \max\{k_{\nu,p},1\}}^{p-1} a_{\nu_p , \nu_l} + \delta_{k_{\nu,p} , 0}\la \alpha_{\nu_p}^\vee ,
\Lambda \ra  = - \operatornamewithlimits{\sum}\limits_{l =\max\{k_{\nu,p},1\}}^{p-1} a_{\nu_p , \nu_l} + \delta_{k_{\nu,p} , 0}\la \alpha_{\nu_p}^\vee , \widetilde{\Lambda} \ra
    \end{aligned}
 $$
  So, $x^{\un{b}}e(\nu) \in T_\gamma^{\widetilde{\Lambda}}$, and this completes the proof.
\end{proof}

\begin{cor}
  Assume that $\beta \in Q_n^+$ satisfies \eqref{eq:DR}. For $\Lambda ,\Lambda' \in P^+$, set $\widetilde{\Lambda} = \Lambda + \Lambda'$.
  The canonical homomorphism of algebras
  $$Z (\RSS_\beta^{\widetilde{\Lambda}}) \to Z (\RSS_\beta^\Lambda)$$
  is surjective.
\end{cor}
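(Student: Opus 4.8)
The plan is to reduce the statement to Lemma~\ref{finalInject} by an induction on $|\Lambda'| := \sum_{j\in I} n_j$, where $\Lambda' = \sum_{j\in I} n_j\Lambda_j$. When $|\Lambda'| = 0$ we have $\Lambda' = 0$, $\widetilde{\Lambda} = \Lambda$, and the map in question is the identity, so there is nothing to prove. For the inductive step, suppose $|\Lambda'|\geqslant 1$ and choose $i\in I$ with $n_i\geqslant 1$. Write $\Lambda' = \Lambda_i + \Lambda''$, where $\Lambda'' := (n_i-1)\Lambda_i + \sum_{j\neq i} n_j\Lambda_j \in P^+$ satisfies $|\Lambda''| = |\Lambda'| - 1$. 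Then $\widetilde{\Lambda} = \Lambda + \Lambda' = (\Lambda + \Lambda'') + \Lambda_i$.

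The next step is to observe that the canonical surjection factors: since $a_j^{\Lambda}(u)\mid a_j^{\Lambda+\Lambda''}(u)\mid a_j^{\widetilde{\Lambda}}(u)$ for all $j\in I$, we get the chain of ideal containments $I_{\widetilde{\Lambda},\beta}\subseteq I_{\Lambda+\Lambda'',\beta}\subseteq I_{\Lambda,\beta}$, and hence $p_\beta^{\Lambda,\Lambda'} = p_\beta^{\Lambda,\Lambda''}\circ p_\beta^{\Lambda+\Lambda'',\Lambda_i}$ as surjections $\RSS_\beta^{\widetilde{\Lambda}}\twoheadrightarrow\RSS_\beta^{\Lambda+\Lambda''}\twoheadrightarrow\RSS_\beta^{\Lambda}$. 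Restricting to centers yields $\ov{p}_\beta^{\Lambda,\Lambda'} = \ov{p}_\beta^{\Lambda,\Lambda''}\circ\ov{p}_\beta^{\Lambda+\Lambda'',i}$. Now apply Lemma~\ref{finalInject} with $\Lambda$ there replaced by $\Lambda+\Lambda''$: this gives that $\ov{p}_\beta^{\Lambda+\Lambda'',i}\colon Z(\RSS_\beta^{\widetilde{\Lambda}})\to Z(\RSS_\beta^{\Lambda+\Lambda''})$ is surjective. By the induction hypothesis applied to the pair $(\Lambda,\Lambda'')$ (legitimate since $|\Lambda''| < |\Lambda'|$), the map $\ov{p}_\beta^{\Lambda,\Lambda''}\colon Z(\RSS_\beta^{\Lambda+\Lambda''})\to Z(\RSS_\beta^{\Lambda})$ is surjective as well. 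A composition of surjections is surjective, which completes the induction and hence the proof.

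There is no real obstacle remaining here: the substantive content has already been established in Lemma~\ref{finalInject} (which in turn rests on the explicit monomial basis $T_\gamma^\Lambda$ of the cocenter from Theorem~\ref{thm:MBCocen} and on Theorem~\ref{thm:annker}). The only point deserving a line of care is the compatibility of the various canonical surjections under composition, i.e.\ the factorization $p_\beta^{\Lambda,\Lambda'} = p_\beta^{\Lambda,\Lambda''}\circ p_\beta^{\Lambda+\Lambda'',\Lambda_i}$, but this is immediate from the transitivity of the defining-ideal containments noted above and the fact that each $p_\beta^{\bullet,\bullet}$ is the unique algebra homomorphism fixing the KLR generators. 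Thus the remaining argument is a routine dévissage, and the statement follows.
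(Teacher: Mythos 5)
Your proof is correct and is exactly the argument the paper intends: the corollary is stated without proof precisely because it follows by iterating Lemma~\ref{finalInject} along the factorization $\ov{p}_\beta^{\Lambda,\Lambda'}=\ov{p}_\beta^{\Lambda,\Lambda''}\circ\ov{p}_\beta^{\Lambda+\Lambda'',i}$, which is the same d\'evissage on $|\Lambda'|$ you carry out (and which the paper also invokes in the proof of Theorem~\ref{thm:CC}). No gaps; your remark on the compatibility of the canonical surjections is the only point needing mention, and you handle it correctly via the nested defining ideals.
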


Now, by Theorem~\ref{thm:CC} or \ref{thm:CC2}, we have

\begin{thm}\label{thm:cenconj}
  Assume that $\beta \in Q_n^+$ satisfies \eqref{eq:DR}. For any $\Lambda \in P^+$, the canonical algebra homomorphism
  $$Z(\RSS_\beta) \to Z (\RSS_\beta^\Lambda)$$
  is surjective. In other words, the Center Conjecture \ref{conj:CC} hods for $\R[\beta]$.
\end{thm}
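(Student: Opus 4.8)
The plan is to obtain this as the endpoint of the ``cocenter approach''. By Theorem~\ref{thm:CC2} it suffices to prove that the dual map
$\ov{\iota}_\beta^{\Lambda,i}\colon \RSS_\beta^{\Lambda}/[\RSS_\beta^{\Lambda},\RSS_\beta^{\Lambda}]\to
\RSS_\beta^{\Lambda+\Lambda_i}/[\RSS_\beta^{\Lambda+\Lambda_i},\RSS_\beta^{\Lambda+\Lambda_i}]$ is injective for every $\Lambda\in P^+$ and every $i\in I$. By Theorem~\ref{thm:annker} this map is induced by multiplication with the explicit central element $z(i,\beta)=\sum_{\nu\in I^\beta}\prod_{\nu_k=i}x_k\,e(\nu)\in\RSS_\beta$, so the problem reduces to showing that multiplication by $z(i,\beta)$ is injective on the cocenter $\RSS_\beta^{\Lambda}/[\RSS_\beta^{\Lambda},\RSS_\beta^{\Lambda}]$.

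To do this I would first produce an explicit $\kb$-basis of the cocenter. Since $\kb$ is a field and $\beta$ satisfies \eqref{eq:DR}, passing to the polynomial subalgebra identifies $\RSS_\beta^{\Lambda}/[\RSS_\beta^{\Lambda},\RSS_\beta^{\Lambda}]$ with $P_\beta/D_\beta^{\Lambda}$, where $D_\beta^{\Lambda}=[\RSS_\beta,\RSS_\beta]+I_{\Lambda,\beta}$. Using $Q_{i,j}(u,v)=Q_{j,i}(v,u)$ and the telescoping identity \eqref{eq:redind}, the commutator generators can be rewritten in terms of those indexed by permutations that are indecomposable relative to $\nu$; by Lemma~\ref{indecomp1} these are precisely the intervals $u=s_ks_{k+1}\cdots s_t$ with $\nu_k<\nu_{t+1}<\nu_p$ for $k<p\leqslant t$, and then $Q_{u,\nu}$ is monic in $x_{t+1}$. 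Fixing an initial weight $\gamma$ and a suitable anti-lexicographic order $\prec_\gamma$ on the monomial basis $B_\beta$, I would then carve out of the set of all such commutators a subset $D'_2$ so that $T_\gamma^{\Lambda}\sqcup D'_2\sqcup M_2$ is unitriangular with respect to $B_\beta$ (hence a basis of $P_\beta$), where $M_2$ is the basis of $P_\beta\cap I_{\Lambda,\beta}$ from Corollary~\ref{cor:MBDCR}, while $D'_2\sqcup M_2$ still spans $D_\beta^{\Lambda}$. This is exactly Theorem~\ref{thm:MBCocen}, and the heart of it is the second condition: one shows by descending induction on the $\prec_\gamma$-leading term that every ``bad'' commutator in $D_2\setminus D'_2$ lies in the span of $D'_2\sqcup M_2$, splitting into cases according to whether the first index $t$ at which the exponent violates the bound $d_{\nu,u,t}$ lies above, at, or below the endpoint $q$ of the interval $u$, using the cyclotomic relations in the first two cases and \eqref{eq:redind} together with a secondary induction in the last.

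With the basis $T_\gamma^{\Lambda}$ in hand, the injectivity of $\ov{\iota}_\beta^{\Lambda,i}$ is immediate (Lemma~\ref{finalInject}): choosing $\gamma$ with $\gamma_1=i$, multiplication by $z(i,\beta)$ sends $x^{\un{a}}e(\nu)\in T_\gamma^{\Lambda}$ to $x^{\un{b}}e(\nu)$ with $b_t=a_t+\delta_{\nu_t,i}$; since under \eqref{eq:DR} each $\nu\in I^\beta$ has at most one index $t$ with $\nu_t=i$, and since $\gamma_1=i$ forces $k_{\nu,t}=0$ there, the defining inequality for that coordinate relaxes by exactly $\la\alpha_i^\vee,\Lambda+\Lambda_i\ra-\la\alpha_i^\vee,\Lambda\ra=1$ while all other coordinates are untouched, so the image lies in $T_\gamma^{\Lambda+\Lambda_i}$ and is therefore linearly independent. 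Hence $\ov{\iota}_\beta^{\Lambda,i}$ is injective for all $\Lambda$ and $i$, and Theorem~\ref{thm:CC2} gives the Center Conjecture, i.e.\ the surjectivity of $Z(\RSS_\beta)\to Z(\RSS_\beta^{\Lambda})$.

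The main obstacle is the construction of the cocenter basis, Theorem~\ref{thm:MBCocen}: all the steps downstream are forced once that is available, but balancing the two requirements on $D'_2$ --- small enough that the leftover $T_\gamma^{\Lambda}$ is independent in the cocenter, yet large enough that $D'_2\sqcup M_2$ still generates $D_\beta^{\Lambda}$ --- needs the tight combinatorial description of indecomposable permutations in Lemma~\ref{indecomp1} and a careful multi-case leading-term induction.
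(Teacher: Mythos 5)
Your proposal is correct and follows essentially the same route as the paper: reduce via Theorem~\ref{thm:CC2} to injectivity of $\ov{\iota}_\beta^{\Lambda,i}$, identify it as multiplication by $z(i,\beta)$ via Theorem~\ref{thm:annker}, build the cocenter basis $T_\gamma^\Lambda$ from Theorem~\ref{thm:MBCocen} using Lemma~\ref{indecomp1}, and conclude by the shift argument of Lemma~\ref{finalInject} with $\gamma_1=i$. No gaps to report.
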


\vskip 3em


\bibliographystyle{amsplain}


\end{document}